\def\boxit#1{\vbox{\hrule\hbox{\vrule\kern3pt
     \vbox{\kern3pt#1\kern3pt}\kern3pt\vrule}\hrule}}
\newtheorem{Thm}{Theorem}[section]
\newtheorem{Lemma}[Thm]{Lemma}
\newtheorem{Cor}[Thm]{Corollary}
\newtheorem{Prop}[Thm]{Proposition}
\theoremstyle{definition}
\newtheorem{Def}[Thm]{Definition}
\newtheorem{Example}[Thm]{Example}
\newtheorem{Rmk}[Thm]{Remark}
\def\frakS{\mathfrak{S}}
\def\bbc{\mathbb{C}}
\def\bbf{\mathbb{F}}
\def\bbn{\mathbb{N}}
\def\bbq{\mathbb{Q}}
\def\bbr{\mathbb{R}}
\def\bbz{\mathbb{Z}}
\def\calA{\mathcal{A}}
\def\calH{\mathcal{H}}
\def\calI{\mathcal{I}}
\def\calO{\mathcal{O}}
\def\calR{\mathcal{R}}
\def\calS{\mathcal{S}}
\def\lra{\longrightarrow}
\def\x{\times}
\def\bs{\backslash}
\def\aut{\mathrm{Aut}}
\def\aff{\mathrm{Aff}}
\def\fix{\mathrm{Fix}}
\def\tr{\mathrm{tr\,}}
\def\ad{\mathrm{ad}}
\def\sp{\mathrm{sp}}
\def\frakS{\mathfrak{S}}
\def\sgn{\mathrm{sign~\!}}
\def\R{\mathrm{(R)}}
\def\HPer{\mathrm{HPer}}
\def\DH{\mathrm{DH}}
\def\lcm{\mathrm{LCM}}
\def\bb{|\hspace{-1pt}|}
\def\Per{\mathrm{Per}}
\def\EP{\mathrm{EP}}
\def\DA{\mathrm{DA}}
\def\NP{\mathrm{NP}}
\def\NF{\mathrm{NF}}
\def\alg{\mathrm{alg}}
\begin{document}

\title[The Nielsen and Reidemeister theories of iterations]
{The Nielsen and Reidemeister theories of iterations
on infra-solvmanifolds of type $\R$ and poly-Bieberbach groups}

\author{Alexander Fel'shtyn}
\address{Instytut Matematyki, Uniwersytet Szczecinski,
ul. Wielkopolska 15, 70-451 Szczecin, Poland}
\email{fels@wmf.univ.szczecin.pl}

\author{Jong Bum Lee}
\address{Department of mathematics, Sogang University, Seoul 121-742, KOREA}
\email{jlee@sogang.ac.kr}
\thanks{The second author is supported in part by Basic Science Researcher Program
through the National Research Foundation of Korea(NRF) funded by the Ministry of Education
(No.~\!2013R1A1A2058693) and by the Sogang University Research Grant of 2010 (10022).}

\subjclass[2000]{Primary 37C25; Secondary 55M20}
\date{XXX 1, 2014 and, in revised form, YYY 22, 2015.}

\keywords{Infra-solvmanifold, Nielsen number, Nielsen zeta function, periodic $[\varphi]$-orbit,
poly-Bieberbach group, Reidemeister number, Reidemeister zeta function}

\begin{abstract}
We study the asymptotic behavior of the sequence of the Nielsen numbers $\{N(f^k)\}$,
the essential periodic orbits of $f$ and the homotopy minimal periods of $f$
by using the Nielsen theory of maps $f$ on infra-solvmanifolds of type $\R$.
We develop the Reidemeister theory for the iterations of any endomorphism $\varphi$
on an arbitrary group and study the asymptotic behavior of the sequence of
the Reidemeister numbers $\{R(\varphi^k)\}$, the essential periodic $[\varphi]$-orbits
and the heights of $\varphi$ on poly-Bieberbach groups.
\end{abstract}

\maketitle


\section{Introduction}\label{introd}

Let $f:X\to X$ be a map on a connected compact polyhedron $X$.
A point $x\in X$ is a fixed point of $f$ if $f(x)=x$ 
and is a periodic point of $f$ with period $n$ if $f^n(x)=x$.
The smallest period of a periodic $x$ is called the {\bf minimal period}.
We will use the following notations:
\begin{align*}
&\fix(f)=\{x\in X\mid f(x)=x\},\\
&\Per(f)=\text{the set of all minimal periods of $f$},\\
&P_n(f)=\text{the set of all periodic points of $f$ with minimal period $n$},\\
&{\HPer(f)=\bigcap_{g\simeq f}\left\{n\in\bbn\mid P_n(g)\ne\emptyset\right\}}\\
&\hspace{1.4cm}=\text{the set of all {\bf homotopy minimal periods} of $f$}.
\end{align*}
Let $p:\tilde{X}\rightarrow X$ be the universal covering projection onto $X$
and $\tilde{f}:\tilde{X}\rightarrow \tilde{X}$ a fixed lift of $f$.
Let $\Pi$ be the group of covering transformations of the projection $p:\tilde{X}\to X$.
Then $f$ induces an endomorphism $\varphi=\varphi_f:\Pi\to\Pi$ by the following identity
$$
\varphi(\alpha)\tilde{f}=\tilde{f}\alpha,\quad\forall\alpha\in\Pi.
$$
The subsets $p(\fix(\alpha\tilde{f}))\subset \fix(f)$, $\alpha\in\Pi$,  are called {\bf fixed point classes} of $f$.
A fixed point class is called {\bf essential} if its index is nonzero.
The number of essential fixed point classes is called the {\bf Nielsen number} of $f$, denoted by $N(f)$ \cite{Jiang}.

The Nielsen number is always finite and is a homotopy invariant lower bound for the number of fixed points of $f$.
In the category of compact, connected polyhedra the Nielsen number of a map is,
apart from in certain exceptional cases, equal to the least number of fixed points of maps
with the same homotopy type as $f$.

Let $\varphi:\Pi\to\Pi$ be an endomorphism on an arbitrary group $\Pi$.
Consider  the {\bf Reidemeister action} of $\Pi$ on $\Pi$ determined by the endomorphism $\varphi$
and defined as follows:
$$
\Pi\x\Pi\lra\Pi,\quad (\gamma,\alpha)\mapsto\gamma\alpha\varphi(\gamma)^{-1}.
$$
The Reidemeister class containing $\alpha$ will be denoted by $[\alpha]$,
and the set of Reidemeister classes of $\Pi$ determined by $\varphi$
will be denoted by $\calR[\varphi]$.
Write $R(\varphi)=\#\calR[\varphi]$, called the {\bf Reidemeister number} of $\varphi$.
When the endomorphism $\varphi:\Pi\to\Pi$ is induced from a self-map $f:X\to X$,
i.e., when $\varphi=\varphi_f$, we also refer to $\calR[\varphi]$
as the set $\calR[f]$ of Reidemeister classes of $f$,
and $R(\varphi)$ as the Reidemeister number $R(f)$ of $f$.

It is easy to observe that if $\psi$ is an automorphism on $\Pi$,
then $\psi$ sends the Reidemeister class $[\alpha]$ of $\varphi$
to the Reidemeister class $[\psi(\alpha)]$ of $\psi\varphi\psi^{-1}$.
Hence the Reidemeister number is an automorphism invariant.
For any $\beta\in\Pi$, let $\tau_\beta$ denote the inner automorphism determined by $\beta$.
We will compare $\calR[\varphi]$ with $\calR[\tau_\beta\varphi]$.
Observe that the right multiplication $r_{\beta^{-1}}$ by $\beta^{-1}$ on $\Pi$
induces a bijection $\calR[\varphi]\to\calR[\tau_\beta\varphi]$, $[\alpha]\mapsto[\alpha\beta^{-1}]$.
Indeed,
\begin{align*}
r_{\beta^{-1}}:\gamma\cdot\alpha\cdot\varphi(\gamma)^{-1}\buildrel{}\over\longmapsto
&\ (\gamma\cdot\alpha\cdot\varphi(\gamma)^{-1})\beta^{-1}\\
&= \gamma\cdot(\alpha\beta^{-1})\cdot\beta\varphi(\gamma)^{-1}\beta^{-1}
=\gamma\cdot(\alpha\beta^{-1})\cdot(\tau_\beta\varphi)(\gamma)^{-1}.
\end{align*}
Similarly, we can show that $r_{(\beta\varphi(\beta)\cdots\varphi^{n-1}(\beta))^{-1}}$
induces a bijection $\calR[\varphi^n]\to \calR[(\tau_\beta\varphi)^n]$,
$[\alpha]^n\mapsto [\alpha(\beta\varphi(\beta)\cdots\varphi^{n-1}(\beta))^{-1}]^n$.
Hence the Reidemeister number is a conjugacy invariant.
This is not surprising because if $f$ and $g$ are homotopic,
then their induced endomorphisms differ by an inner automorphism $\tau_\beta$.

The set $\fix(f^n)$ of periodic points of $f$ splits into a disjoint union of periodic point classes
$p(\fix(\alpha\tilde{f}^n))$ of $f$, and these sets are indexed by the Reidemeister classes
$[\alpha]^n\in\calR[\varphi^n]$ of the endomorphism $\varphi^n$ where $\varphi=\varphi_f$.
Namely,
\begin{align}\label{decomp}
\fix(f^n)=\coprod_{[\alpha]^n\in\calR[\varphi^n]}p\left(\fix(\alpha\tilde{f}^n)\right).\tag{D}
\end{align}

From the dynamical point of view, it is natural to consider the Nielsen numbers $N(f^k)$
and the Reidemeister numbers $R(f^k)$ of all iterations of $f$ simultaneously.
For example, N. Ivanov \cite{I} introduced the notion of the asymptotic Nielsen number,
measuring the growth of the sequence $N(f^k)$,
and found the basic relation between the topological entropy of $f$
and the asymptotic Nielsen number.
Later on, it was suggested in \cite{Fel84, PilFel85, Fel88, Fel91, Fel00}
to arrange the Nielsen numbers $N(f^k)$, the Reidemeister numbers $R(f^k)$ and $R(\varphi^k)$ of all iterations of $f$ and $\varphi$
into the Nielsen and the Reidemeister zeta functions
\begin{align*}
&N_f(z)=\exp\left(\sum_{k=1}^\infty\frac{N(f^k)}{k}z^k\right),
\end{align*}
\begin{align*}
R_f(z)=\exp\left(\sum_{k=1}^\infty\frac{R(f^k)}{k}z^k\right),\quad R_\varphi(z)=\exp\left(\sum_{k=1}^\infty\frac{R(\varphi^k)}{k}z^k\right).
\end{align*}
The Nielsen and Reidemeister zeta functions are  nonabelian analogues of the Lefschetz zeta function
$$
L_f(z)=\exp\left(\sum_{k=1}^\infty\frac{L(f^k)}{k}z^k\right),
$$
where
\begin{equation*}
 L(f^n) := \sum_{k=0}^{\dim X} (-1)^k \tr\Big[f_{*k}^n:H_k(X;\bbq)\to H_k(X;\bbq)\Big]
\end{equation*}
is the Lefschetz number of the iterate $f^n$ of $f$.

{Nice analytic properties of $N_f(z)$ \cite{Fel00} indicate that the numbers $N(f^k)$, $k\ge1$, are closely interconnected.}
Other manifestations of this are Gauss congruences
\begin{align*}
\sum_{d\mid k}\mu\!\left(\frac{k}{d}\right)N(f^d)\equiv0\mod{k},
\end{align*}
for any $ k>0$,
where $f$ is  a map on an infra-solvmanifold of type $\R$ \cite{FL}. Whenever all $R(f^k)$ are finite,
we also have
\begin{align*}
\sum_{d\mid k}\mu\!\left(\frac{k}{d}\right)R(f^d)=\sum_{d\mid k}\mu\!\left(\frac{k}{d}\right)N(f^d)\equiv0\mod{k}.
\end{align*}
It is  known that the Reidemeister numbers of the iterates of an automorphism $\varphi$ of an {almost polycyclic group} also  satisfy  Gauss congruences \cite{crelle, ft}.

The fundamental invariants of $f$ used in the study of periodic points are
the Lefschetz numbers $L(f^k)$, and their algebraic combinations, the Nielsen numbers $N(f^k)$
and the Nielsen--Jiang periodic numbers $NP_n(f)$ and $N\Phi_n(f)$, and the Reidemeister numbers $R(f^k)$ and $R(\varphi^k)$.

The study of periodic points by using the Lefschetz theory has been done extensively
by many authors in the literature such as \cite{Jiang}, \cite{Dold}, \cite{BaBo}, \cite{JM}, \cite{Matsuoka}.
A natural question is to ask how much information we can get
about the set of essential periodic points of $f$ or about the set of (homotopy) minimal periods of $f$
from the study of the sequence $\{N(f^k)\}$ of the Nielsen numbers of iterations of $f$.
Utilizing the arguments employed mainly in \cite{BaBo} and \cite[Chap.~III]{JM}
for the Lefschetz numbers of iterations, we study the asymptotic behavior of the sequence $\{N(f^k)\}$,
the essential periodic orbits of $f$ and the homotopy minimal periods of $f$
by using the Nielsen theory of maps $f$ on infra-solvmanifolds of type $\R$.
We will give a brief description of the main results in Section~\ref{N_itetate}
whose details and proofs can be found in \cite{FL2}.
From the identity \eqref{decomp}, the Reidemeister theory for the iterations of $f$
is almost parallel to the Nielsen theory of the iterates of $f$.
Motivated from this parallelism, we will develop in Section~\ref{prelim} the Reidemeister theory
for the iterations of any endomorphism $\varphi$ on an arbitrary group $\Pi$.
In this paper, we will study the asymptotic behavior of the sequence $\{R(\varphi^k)\}$,
the essential periodic $[\varphi]$-orbits and the heights of $\varphi$ on poly-Bieberbach groups.
We refer to \cite{FL,FL2} for background to our present work.

\smallskip
\noindent
\textbf{Acknowledgments.}
The first author is indebted to the Max-Planck-Institute for Mathematics(Bonn) and Sogang University(Seoul)
for the support and hospitality and the possibility of the present research during his visits there.
{The authors also would like to thank Thomas Ward for making careful corrections and suggestions to a few expressions.}

\section{Preliminaries}\label{prelim}

Recall that the periodic point set $\fix(f^n)$ splits into a disjoint union of periodic point classes
$$
\fix(f^n)=\bigsqcup_{[\alpha]^n\in\calR[\varphi^n]}p\left(\fix(\alpha\tilde{f}^n)\right).
$$
Consequently, there is a $1$-$1$ correspondence $\eta$ from the set of periodic point classes $p(\fix(\alpha\tilde{f}^n))$
to the set of Reidemeister classes $[\alpha]^n$ of $\varphi^n$.
When $m\mid n$, $\fix(f^m)\subset\fix(f^n)$. Let $x\in\fix(f^m)$ and $\tilde{x}\in p^{-1}(x)$.
Then there exist unique $\alpha,\beta\in\pi$ such that $\alpha\tilde{f}^m(\tilde{x})=\tilde{x}$
and $\beta\tilde{f}^n(\tilde{x})=\tilde{x}$. It can be easily derived that
$$
\beta=\alpha\varphi^m(\alpha)\varphi^{2m}(\alpha)\cdots\varphi^{n-m}(\alpha).
$$
This defines two natural functions, called {\bf boosting functions},
\begin{align*}
&\gamma_{m,n}:p\left(\fix(\alpha\tilde{f}^m)\right)\mapsto
p\left(\fix(\alpha\varphi^m(\alpha)\varphi^{2m}(\alpha)\cdots\varphi^{n-m}(\alpha)\tilde{f}^n)\right),\\
&\iota_{m,n}=\iota_{m,n}(\varphi):[\alpha]^m\mapsto [\alpha\varphi^m(\alpha)\varphi^{2m}(\alpha)\cdots\varphi^{n-m}(\alpha)]^n
\end{align*}
so that the following diagram is commutative
\smallskip

\centerline{ \xymatrix{ p\left(\fix(\alpha\tilde{f}^m)\right)
\ar@{|-{>}}[rr]^(0.3){\gamma_{m,n}} \ar@{|->}[d]^\eta &  &
p\left(\fix(\alpha\varphi^m(\alpha)\varphi^{2m}(\alpha)\cdots\varphi^{n-m}(\alpha)\tilde{f}^n)\right)
\ar@{|->}[d]^\eta \\  [\alpha]^m \ar@{|-{>}}[rr]^(0.3){\iota_{m,n}} & &
[\alpha\varphi^m(\alpha)\varphi^{2m}(\alpha)\cdots\varphi^{n-m}(\alpha)]^n\\
 }}
\smallskip

\noindent
Moreover, it is straightforward to check the commutativity of the diagram
\smallskip

\centerline{ \xymatrix{ [\alpha]^m
\ar@{|-{>}}[rrr]^(0.3){r_{(\beta\varphi(\beta)\cdots\varphi^{m-1}(\beta))^{-1}}}
\ar@{|->}[d]^{\iota_{m,n}(\varphi)} & & &
[\alpha(\beta\varphi(\beta)\cdots\varphi^{m-1}(\beta))^{-1}]^m
\ar@{|->}[d]^{\iota_{m,n}(\tau_\beta\varphi)} \\
[\alpha\varphi^m(\alpha)\cdots\varphi^{n-m}(\alpha)]^n \ar@{|-{>}}[rrr]^(0.4){r_{(\beta\varphi(\beta)\cdots\varphi^{n-1}(\beta))^{-1}}} & & &
[\alpha\varphi^m(\alpha)\cdots\varphi^{n-m}(\alpha)(\beta\varphi(\beta)\cdots\varphi^{m-1}(\beta))^{-1}]^n\\
 }}
\smallskip

On the other hand, for $x\in p(\fix(\alpha\tilde{f}^n))$ we choose
$\tilde{x}\in p^{-1}(x)$ so that
$\alpha\tilde{f}^n(\tilde{x})=\tilde{x}$. Then
$$
\varphi(\alpha)\tilde{f}^n\tilde{f}(\tilde{x})
=\varphi(\alpha)\tilde{f}\tilde{f}^n(\tilde{x})
=\tilde{f}\alpha\tilde{f}^n(\tilde{x})=\tilde{f}(\tilde{x})
$$
and so $f(x)\in p(\fix(\varphi(\alpha)\tilde{f}^n))$. Namely,
$p(\fix(\varphi(\alpha)\tilde{f}^n))$ is the periodic point class
determined by $f(x)$. Therefore, $f$ induces a function on the
periodic point classes of $f^n$, which we denote by $[f]$, defined
as follows:
$$
[f]: p\left(\fix(\alpha\tilde{f}^n)\right) \longmapsto
p\left(\fix(\varphi(\alpha)\tilde{f}^n)\right).
$$
Similarly, $\varphi$ induces a well-defined function on the
Reidemeister classes of $\varphi^n$, which we will denote by
$[\varphi]$, given by $[\varphi] : [\alpha]^n\mapsto
[\varphi(\alpha)]^n$. Then the following diagram commutes:
\smallskip

\centerline{ \xymatrix{ p\left(\fix(\alpha\tilde{f}^n)\right)
\ar@{|-{>}}[rr]^(0.5){[f]} \ar@{<->}[d] &  &
p\left(\fix(\varphi(\alpha)\tilde{f}^n)\right) \ar@{<->}[d]
\\  [\alpha]^n \ar@{|-{>}}[rr]^(0.5){[\varphi]} & &
[\varphi(\alpha)]^n\\
 }}
\smallskip

\noindent%
By \cite[Theorem~III.1.12]{Jiang}, $[f]$ is an index-preserving
bijection on the periodic point classes of $f^n$. We say that
$[\alpha]^n$ is {\bf essential} if the corresponding class
$p(\fix(\alpha\tilde{f}^n))$ is essential. Evidently,
\smallskip

\centerline{ \xymatrix{ \fix(\alpha\tilde{f}^n)
\ar@/_1.5pc/[rr]|{\mathrm{{\ }identity }} \ar[r]^(0.48){\tilde{f}} &
\fix(\varphi(\alpha)\tilde{f}^n)
\ar[r]^(0.52){\alpha\tilde{f}^{n-1}} & \fix(\alpha\tilde{f}^n).
 }}
\smallskip

\noindent%
This implies that for each $\alpha\in\Pi$, the restrictions of $f$
$$
f\vert : p\left(\fix(\alpha\tilde{f}^n)\right) \lra
p\left(\fix(\varphi(\alpha)\tilde{f}^n)\right)
$$
are homeomorphisms such that $[f]^n$ is the identity. In particular,
$$
p\left(\fix(\alpha\tilde{f}^n)\right)=\emptyset \quad \Longleftrightarrow\quad
p\left(\fix(\varphi(\alpha)\tilde{f}^n)\right)=\emptyset.
$$
Moreover, $[\varphi]^n$ is the identity, $\iota_{m,n}\circ[\varphi]=[\varphi]\circ\iota_{m,n}$
and $\gamma_{m,n}\circ[f]=[f]\circ\gamma_{m,n}$.

The {\bf length} of the element $[\alpha]^n\in\mathcal{R}[\varphi^n]$, denoted by $\ell([\alpha]^n)$,
is the smallest positive integer $\ell$ such that $[\varphi]^\ell([\alpha]^n)=[\alpha]^n$.
The $[\varphi]$-{\bf orbit} of $[\alpha]^n$ is the set
$$
\langle [\alpha]^n\rangle=\{[\alpha]^n,
[\varphi]([\alpha]^n),\cdots, [\varphi]^{\ell-1}([\alpha]^n)\},
$$
where $\ell=\ell([\alpha]^n)$.
We must have that $\ell\mid n$. The element $[\alpha]^n\in\mathcal{R}[\varphi^n]$ is {\bf reducible} to $m$
if there exists $[\beta]^m\in\mathcal{R}[\varphi^m]$ such that $\iota_{m,n}([\beta]^m)=[\alpha]^n$.
Note that if $[\alpha]^n$ is reducible to $m$, then $m\mid n$. If $[\alpha]^n$ is not reducible to any $m<n$,
we say that $[\alpha]^n$ is {\bf irreducible}.
The {\bf depth} of $[\alpha]^n$, denoted by $d([\alpha]^n)$, is the smallest integer $m$
to which $[\alpha]^n$ is reducible.
Since clearly $d([\alpha]^n)=d([\varphi]([\alpha]^n))$, we can define the {\bf depth}
of the orbit $\langle [\alpha]^n\rangle$: $d(\langle [\alpha]^n\rangle)=d([\alpha]^n)$.
If $n=d([\alpha]^n)$, the element $[\alpha]^n$ or the orbit $\langle [\alpha]^n\rangle$
is called {\bf irreducible}.

Clearly, as a set $p(\fix(\alpha\tilde{f}^m)) \subset
p(\fix(\alpha\varphi^m(\alpha)\varphi^{2m}(\alpha)\cdots\varphi^{n-m}(\alpha)\tilde{f}^n))$.
This implies that if $p(\fix(\alpha\tilde{f}^m))$ is the periodic point class of
$f^m$ determined by $x$, then
$$
p\left(\fix(\alpha\varphi^m(\alpha)\varphi^{2m}(\alpha)\cdots\varphi^{n-m}(\alpha)\tilde{f}^n)\right)
$$
is the periodic point class of $f^n$ determined by $x$.

Note that if $[\alpha]^n$ is irreducible, then every element of the fixed point class
$p(\fix(\alpha\tilde{f}^n))$ is a periodic point of $f$ with minimal period $n$.
Let $[\alpha]^n$ be an essential class with depth $m$
and let $\iota_{m,n}([\beta]^m)=[\alpha]^n$.
Then there is a periodic point $x$ of $f$ with minimal period $m$.
Consequently, the irreducibility of a periodic Reidemeister class of $\varphi$
is an algebraic counterpart of the minimal period of a periodic point of $f$.
We say that a periodic Reidemeister class $[\alpha]^n$ of $\varphi$ has {\bf height} $n$
if it is irreducible.
The set $\calI\calR(\varphi^n)$ of all classes in $\calR[\varphi^n]$ with height $n$
is an algebraic analogue of the set $P_n(f)$ of periodic points of $f$ with minimal period $n$.
Let $\calI(\varphi)$ be the set of all irreducible classes of $\varphi$.
That is,
$$
\calI(\varphi)=\{[\alpha]^k\in\calR[\varphi^k]\mid \alpha\in\Pi, k>0, [\alpha]^k \text{ is irreducible}\}.
$$
We define the set $\calH(\varphi)$ of all {\bf heights} of $\varphi$ to be
$$
\calH(\varphi)=\{k\in\bbn\mid \text{ some $[\alpha]^k$ has height $k$}\}.
$$
Then $\calH(\varphi)$ is an algebraic analogue of the set $\Per(f)$ of all minimal periods of $f$.
Motivated from homotopy minimal periods of $f$,
we may define the set of all {\bf homotopy heights} of $\varphi$ as follows:
$$
\calH\calI(\varphi)=\bigcap_{\beta\in\Pi}\left\{n\in\bbn\mid \calI\calR((\tau_\beta\varphi)^n)\ne\emptyset\right\}.
$$
However, as we have observed before, since the boosting functions $\iota_{m,n}$ commute with
``right multiplications", i.e.,
$$
\iota_{m,n}(\tau_\beta\varphi)\circ r_{(\beta\varphi(\beta)\cdots\varphi^{m-1}(\beta))^{-1}}
=r_{(\beta\varphi(\beta)\cdots\varphi^{n-1}(\beta))^{-1}}\circ\iota_{m,n}(\varphi),
$$
it follows that the height is a conjugacy invariant.
Consequently, we have $\calH\calI(\varphi)=\calH(\varphi)$.

\section{Poly-Bieberbach groups}\label{poly-B}

The fundamental group of an infra-solvmanifold is called a {\bf poly-Bieberbach} group,
which is a torsion free poly-crystallographic group.
It is known (see for example \cite[Theorem~2.12]{FJ}) that
every poly-Bieberbach group is a torsion-free virtually poly-$\bbz$ group.
We refer to \cite[Theorem~3]{Wilking} for a characterization of poly-crystallographic groups.
Recall also from \cite[Corollary~4]{Wilking} that
for any poly-Bieberbach group $\Pi$
there exist a connected simply connected supersolvable Lie group $S$,
a compact subgroup $K$ of $\aut(S)$ and an isomorphism $\iota$ of $\Pi$ onto
a discrete cocompact subgroup of $S\rtimes K$ such that $\iota(\Pi)\cdot S$ is dense in $S\rtimes K$.
By \cite[Lemma~2.1]{FL2}, the supersolvable Lie groups are the Lie groups of type $\R$, that is, Lie groups for which
if $\ad\, X:\frakS\to\frakS$ has only real eigenvalues for all $X$ in the Lie algebra $\frakS$ of $S$.
Assuming $\iota$ to be an inclusion or identifying $\Pi$ with $\iota(\Pi)$, we have the following commutative diagram
$$
\CD
1@>>>S@>>>S\rtimes K@>{p}>>K@>>>1\\
@.@AAA@AAA@AAA\\
1@>>>\Pi\cap S@>>>\Pi@>>>p(\Pi)@>>>1
\endCD
$$
Here, we cannot assume that the subgroup $p(\Pi)$ of $K\subset\aut(S)$ is a finite group
and that the translations $\Pi\cap S$ form a lattice in the solvable Lie group $S$ of type $\R$.

{In this paper, we will assume the following:
Let $\Pi$ be a poly-Bieberbach group which is the fundamental group of an infra-solvmanifold of type $\R$, i.e.,
$\Pi$ is a discrete cocompact subgroup of $\aff(S):=S\rtimes\aut(S)$,
where $S$ is a connected, simply connected solvable Lie group of type $\R$
and $\Pi\cap S$ is of finite index in $\Pi$ and a lattice of $S$.
The finite group $\Phi:=\Pi/\Pi\cap S$ is called the {\bf holonomy group} of the poly-Bieberbach group $\Pi$ or the infra-solvmanifold
$\Pi\bs{S}$ of type $\R$.}
Naturally $\Phi$ sits in $\aut(S)$.
Let $\varphi:\Pi\to\Pi$ be an endomorphism.
Then by \cite[Theorem~2.2]{LL-Nagoya}, $\varphi$ is semi-conjugate by an ``affine map".
Namely, there exist $d\in S$ and a Lie group endomorphism $D:S\to S$ such that
$\varphi(\alpha)(d,D)=(d,D)\alpha$ for all $\alpha\in\Pi\subset\aff(S)$.
From this identity condition, the affine map $\tilde{f}:=(d,D):S\to S$ restricts
to a map $f:\Pi\bs{S}\to\Pi\bs{S}$ for which it induces the endomorphism $\varphi$.
Conversely, if $f$ is a self-map on an infra-solvmanifold $\Pi\bs{S}$ of type $\R$,
$f$ induces an endomorphism $\varphi=\varphi_f$, see Section~\ref{introd}.
As remarked above, $f$ is homotopic to a map induced by an affine map on $S$.
Since the Lefschetz, Nielsen and Reidemeister numbers of $f$ are homotopy invariants,
we may assume that our $f$ has an affine lift $(d,D)$ on $S$.

\begin{Thm}[{\cite[Corollary~7.6]{FL}}] \label{av}
Let $\varphi:\Pi\to\Pi$ be an endomorphism on a poly-Bieberbach group $\Pi$ of $S$
with holonomy group $\Phi$. If $\varphi$ is the semi-conjugate by an affine map $(d,D)$ on $S$,
then we have
$$
R(\varphi^k)=\frac{1}{\#\Phi}\sum_{A\in\Phi}\sigma\left(\det(I-A_*D_*^k)\right)
$$
where $\sigma:\bbr\to\bbr\cup\{\infty\}$ is given by $\sigma(0)=\infty$
and $\sigma(x)=|x|$ for all $x\ne0$.
Furthermore, if $R(\varphi^k)<\infty$ then $R(\varphi^k)=N(f^k)$
where $f$ is a map on $\Pi\bs{S}$ which induces $\varphi$.
\end{Thm}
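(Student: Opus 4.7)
The plan is to first reduce to the case $k=1$. Since $\varphi^k$ is semi-conjugate by the affine map $(d,D)^k$, whose linear part is $D^k$, the formula for general $k$ follows from the $k=1$ statement applied to $\varphi^k$ (which is induced by $f^k$ on $\Pi\bs S$). So assume $k=1$ throughout.

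For $k=1$, I would pass to the finite cover $Y := \Gamma\bs S \to X := \Pi\bs S$, where $\Gamma := \Pi\cap S$ is the lattice and $\Phi$ is the deck group. Since $Y$ is a solvmanifold whose universal cover $S$ is of type $\R$, Anosov--Nomizu--type results (as developed in \cite{FL}) apply. For each $A\in\Phi\subset\aut(S)$, the composite $A\tilde f$ descends to a self-map $f_A$ of $Y$ with affine lift $(Ad,\,AD)$ and linear part $A_*D_*$, giving
$$
R(\varphi_{f_A}) = N(f_A) = \sigma(\det(I-A_*D_*)),
$$
with the convention $\sigma(0)=\infty$ reflecting infinitely many, index-cancelling fixed point classes.

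The next step is to derive the averaging. On the Reidemeister side, the extension $1\to\Gamma\to\Pi\to\Phi\to 1$ (with $\Gamma$ normal and $\varphi$-invariant, since $\Gamma = \Pi\cap S$ and $S$ is normalized by $(d,D)$) lets one stratify $\calR[\varphi]$ via the induced endomorphism $\bar\varphi:\Phi\to\Phi$ and its twisted conjugacy classes. Each fiber of $\calR[\varphi]\to\calR[\bar\varphi]$, after pulling back to $\Gamma$ via a chosen coset representative $A$, becomes the Reidemeister set of an endomorphism of $\Gamma$ whose linearization on $\fraks$ is $A_*D_*$, and hence has cardinality $\sigma(\det(I-A_*D_*))$ by the previous step. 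Burnside-style averaging over the $\Phi$-action on coset representatives introduces the factor $1/\#\Phi$, yielding the claimed formula. The equality $R(\varphi)=N(f)$ in the non-degenerate case (all determinants nonzero) is the Anosov-type theorem on infra-solvmanifolds of type $\R$, proved in \cite{FL}, which asserts that every Reidemeister class of $\varphi$ is essential with index of constant sign.

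The main obstacle is the averaging step: carefully tracking how the $\Phi$-action on coset representatives interacts with the $\bar\varphi$-twisted conjugation, and justifying the prefactor $1/\#\Phi$ rather than $1$. A secondary difficulty is the degenerate case $\det(I-A_*D_*)=0$ for some $A$: one must argue that the corresponding fiber of $\calR[\varphi]$ is genuinely infinite (this is where the type-$\R$ hypothesis and its role in linearizing via the exponential map is crucial), so that the convention $\sigma(0)=\infty$ makes the formula hold with $R(\varphi)=\infty$, in which case the second statement of the theorem is vacuously satisfied.
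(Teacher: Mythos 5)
You should first note that this paper does not prove Theorem~\ref{av} at all: it is quoted from \cite[Corollary~7.6]{FL}, so the only possible comparison is with the proof in that reference, which indeed proceeds along the lines you sketch (an averaging formula for Reidemeister numbers over the extension $1\to\Gamma\to\Pi\to\Phi\to1$ with $\Gamma=\Pi\cap S$, combined with the computation $R=N=\sigma(\det(I-A_*D_*))$ at the solvmanifold/lattice level, and the reduction from $k$ to $1$ via $(d,D)^k$). Your reduction to $k=1$ and your identification of the lattice-level input are correct in spirit.

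However, the proposal has a genuine gap precisely where the theorem lives. The sentence ``Burnside-style averaging over the $\Phi$-action on coset representatives introduces the factor $1/\#\Phi$, yielding the claimed formula'' is an assertion, not an argument, and taken literally it is false for general extensions: the fibres of the natural surjection $\calR[\varphi]\to\calR[\bar\varphi]$ are \emph{orbit spaces} of an action of the stabilizer (a subgroup of $\Phi$ depending on the class of $\bar A$) on the Reidemeister set of $\tau_a\varphi\vert_\Gamma$, and the plain average $\frac{1}{\#\Phi}\sum_{A\in\Phi}R(\tau_A\varphi\vert_\Gamma)$ equals $R(\varphi)$ only after one proves a non-fusion/freeness statement for these actions (equivalently, on the geometric side, that fixed point classes of the lifts to $\Gamma\bs S$ do not merge or cancel in $\Pi\bs S$). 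That statement is the actual content of \cite[Corollary~7.6]{FL} and of the Lee--Lee averaging formula, and it uses the specific geometry of type $\R$ (e.g.\ that essential classes are singletons with index of constant sign, cf.\ Proposition~\ref{ess}); you explicitly list it as the ``main obstacle'' and leave it unresolved. Two further points are glossed over: (i) for $A\tilde f$ to descend to $Y=\Gamma\bs S$ you need $\varphi(\Gamma)\subset\Gamma$ (and you should compose with representatives $(a,A)\in\Pi$ rather than with the bare automorphism $A$, since the deck group acts through $\Pi/\Gamma$); the justification ``$S$ is normalized by $(d,D)$'' does not give this --- from $\varphi(\gamma)(d,D)=(d,D)\gamma$ one only gets that the holonomy part $G$ of $\varphi(\gamma)$ satisfies $G\circ D=D$, which forces $G=1$ only under extra hypotheses on $D$, so $\varphi$-invariance of $\Gamma$ requires a separate argument; and (ii) the degenerate case $\det(I-A_*D_*)=0\Rightarrow R(\varphi)=\infty$ needs a proof that infinitely many $\Gamma$-level classes stay distinct in $\Pi$ (finiteness of $\Phi$ is what saves you), which you flag but do not supply. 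As it stands the proposal is a plausible roadmap that matches the cited proof's architecture, but the decisive steps are missing.
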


When all $R(\varphi^k)$ are finite, Theorem~\ref{av} says that
the Reidemeister theory for poly-Bieberbach groups follows directly
from the Nielsen theory for infra-solvmanifolds of type $\R$.
In this paper, whenever possible, we will state our results in the language of Reidemeister theory.

\begin{Prop}[{\cite[Proposition~9.3]{FL}}]\label{ess}
Let $f$ be a map on an infra-solvmanifold $\Pi\bs{S}$ of type $\R$ induced by an affine map.
Then every essential fixed point class of $f$ consists of a single element.
\end{Prop}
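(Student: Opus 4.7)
The plan is to pass to an affine lift of the fixed point class and show that the fixed point set in $S$ is a singleton.

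Any fixed point class of $f$ has the form $p(\fix(\alpha\tilde f))$ where $\tilde f = (d,D)$ is the affine lift and $\alpha = (a,A) \in \Pi \subset \aff(S) = S\rtimes\aut(S)$. The composition $\alpha\tilde f$ is itself an affine map,
$$
\alpha\tilde f(x) = c\cdot\psi(x),\qquad c := aA(d)\in S,\ \psi := AD.
$$
By Theorem~\ref{av} the index contribution of this Reidemeister class to $R(\varphi) = N(f)$ is $\sigma(\det(I-\psi_*))$; hence the class being essential (nonzero finite index) forces $I - \psi_*$ to be invertible on the Lie algebra $\frakS$ of $S$.

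The main step is to show that any affine map $\tilde g(x) := c\cdot\psi(x)$ on a simply connected solvable Lie group $S$ of type $\R$ has at most one fixed point whenever $I - \psi_*$ is invertible. I would prove this by induction on $\dim S$ using characteristic subgroups of $S$, which are automatically $\psi$-stable. At each inductive step choose a proper nontrivial characteristic subgroup $N \trianglelefteq S$ (e.g., $[S,S]$ while $S$ is not abelian, then the center of the nilradical once $S$ is nilpotent), note that $\psi(N)\subset N$ and that $I - \psi_*$ remains invertible both on $\mathfrak{n}$ and on $\frakS/\mathfrak{n}$ by the block-triangular form of $\psi_*$ along the filtration. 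The affine map $\tilde g$ descends to an affine map on $S/N$, has a unique fixed coset $\bar x_0$ by induction on $\dim(S/N)$, and the restriction to the corresponding $N$-fiber over $\bar x_0$ is again an affine map on the lower-dimensional $N$ with invertible linear part, giving a unique fixed point by induction on $\dim N$. The base case $S \cong \bbr^m$ is the linear equation $(I-\psi_*)x = c$, with the unique solution $(I-\psi_*)^{-1}c$.

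Since $\fix(\alpha\tilde f) = \{x_0\}$ in $S$, its projection $p(\{x_0\}) = \{p(x_0)\}$ is a single element of $\Pi\bs S$, proving that the essential fixed point class consists of one point. The main obstacle is the inductive uniqueness argument: one has to verify that the descent-lift procedure really produces a single fixed point at each stage, which requires the block-triangular decomposition of $\psi_*$ to propagate invertibility of $I - \psi_*$ to both quotient and fiber, and requires the fiber restrictions to remain honest affine maps. The type-$\R$ hypothesis is used precisely here, guaranteeing that the relevant characteristic subgroups have the clean real structure needed for the decomposition and that $S$ itself is diffeomorphic to $\bbr^{\dim S}$ so that fixed-point counts lift without topological obstruction.
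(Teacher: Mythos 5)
Your skeleton is the same as that of the cited source: pass to the affine lift $\alpha\tilde f=(aA(d),AD)=(c,\psi)$, argue that essentiality forces $\det(I-\psi_*)\neq 0$, and then show that an affine map whose linear part has no eigenvalue $1$ has at most one fixed point on $S$. The gap is at the pivot step, which is where all the real content of Proposition~\ref{ess} lies. Theorem~\ref{av} is a global counting formula for $R(\varphi)$ whose summands $\sigma(\det(I-A_*D_*))$ are indexed by holonomy elements $A\in\Phi$, not by Reidemeister classes; it makes no assertion about the fixed point index of an individual class, and its identification $R(\varphi)=N(f)$ is only available when all Reidemeister numbers are finite, a hypothesis that Proposition~\ref{ess} does not carry. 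What you actually need is the local statement: if $\det(I-(AD)_*)=0$ then the class $p(\fix(\alpha\tilde f))$ has index zero. That statement is exactly the nontrivial input behind the averaging formula for infra-solvmanifolds of type $\R$ (\cite{LL-Nagoya}, and Section~9 of \cite{FL}, which is where Proposition~9.3 is proved), and it is precisely where the type-$\R$ hypothesis does its work — for general solvable Lie groups of type (E) this implication fails, and the averaging formula with it. So your closing remark misplaces the role of type $\R$: the inductive uniqueness argument needs no type-$\R$ assumption at all (it works on any connected, simply connected solvable Lie group), whereas the step "essential $\Rightarrow$ $I-\psi_*$ invertible" cannot be read off Theorem~\ref{av} and must be supplied by the index computation for affine maps.

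The second half of your argument is essentially right, but needs one repair. Since $D$ is only a Lie group endomorphism, $\psi=A\circ D$ need not be an automorphism, so "characteristic" (i.e.\ automorphism-invariant) subgroups are not automatically $\psi$-stable; you need fully invariant ones. The center of the nilradical is not fully invariant (there are nilpotent examples where an endomorphism moves central elements out of the center), so that choice of subgroup can fail. The fix is to run the induction with the derived subgroup alone: $[S,S]$ is fully invariant, closed, simply connected, nilpotent, and proper whenever $S$ is nonabelian, the spectrum of $\psi_*$ splits along the invariant ideal $[\frakS,\frakS]$ so that $1$ is an eigenvalue neither on the sub nor on the quotient, the induced map on $S/[S,S]\cong\bbr^m$ is affine with a unique fixed point, and on the fiber over it the map becomes $n\mapsto\bigl(x_0^{-1}c\,\psi(x_0)\bigr)\psi(n)$, an affine map of $[S,S]$, so induction on dimension closes the argument. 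With the pivot step justified by the index analysis of \cite{LL-Nagoya} rather than Theorem~\ref{av}, and the inductive chain taken to be the derived series, your proof becomes correct and coincides with the one in \cite{FL}.
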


Let $\varphi:\Pi\to\Pi$ be an endomorphism on a poly-Bieberbach group $\Pi$.
We assume as before that $(d,D)$ be an affine map on $S$
and $f:\Pi\bs{S}\to\Pi\bs{S}$ be the map induced by $(d,D)$ and inducing $\varphi$.
We assume further that all $R(\varphi^n)<\infty$.
Hence by Theorem~\ref{av}, $R(\varphi^n)=N(f^n)$ for all $n>0$.
This implies that for every $n>0$ all fixed point classes of $f^n$ are essential
and hence consist of a single element by Proposition~\ref{ess}.
Consequently, we can refer to essential fixed point classes of $f^n$
as {essential periodic points of $f$ with period $n$}.
Moreover, for every $n>0$ {\bf all Reidemeister classes of $\varphi^n$ are essential}.

For $m\mid n$ and for $\beta\in\Pi$, let $\alpha=\beta\varphi^m(\beta)\cdots\varphi^{n-m}(\beta)$
and consider the commuting diagram
\smallskip

\centerline{ \xymatrix{ [\alpha]^n
\ar@{|-{>}}[rr]^(0.4){\eta} \ar@{<-|}[d]^{\iota_{m,n}} &  &
p\left(\fix(\alpha(\alpha)\tilde{f}^n)\right)=\{x\}
\ar@{<-|}[d]^{\gamma_{m,n}} \\  [\beta]^m \ar@{|-{>}}[rr]^(0.4){\eta} & &
p\left(\fix(\beta\tilde{f}^m)\right)=\{x\}\\
 }}
\smallskip

\noindent
This shows that the observation in Section~\ref{prelim} can be refined as follows:
$[\alpha]^n$ is irreducible if and only $[\alpha]^n$ has height $n$
if and only if the corresponding essential periodic point $x$ of $f$ has minimal period $n$.
Moreover, $[\alpha]^n$ has depth $d$
if and only if the corresponding essential periodic point $x$ of $f$ has minimal period $d$.
Let $\ell$ be the length of $[\alpha]^n$.
That is, $[\varphi^\ell(\alpha)]^n=[\alpha]^n$.
Equivalently, we have $f^\ell(x)=x$. This implies that $[\alpha]^n$ is reducible to $\ell$.
Further, $d=\ell$. In particular, if $[\alpha]^n$ is irreducible, then its length is the height, $\ell=n$,
and so $\#\langle[\alpha]^n\rangle=n$.

We denote by $\calO([\varphi],k)$ the set of all (essential) periodic orbits of ${[\varphi]}$ with length $\le k$.
Then we have
\begin{align*}
\calO([\varphi],k)&=\{\langle[\alpha]^m\rangle\mid \alpha\in\Pi, m\le k\}\\
&=\{\langle x\rangle\mid x \text{ is an essential periodic point of $f$ with length $\le k$}\}\\
&=\calO(f,k).
\end{align*}

Recall that the set of {essential periodic points} of $f$ with minimal period $k$ is
$$
\EP_k(f)=\fix_e(f^k)-\bigcup_{d\mid k, d<k}\fix_e(f^d).
$$
Then we have the algebraic counterpart. Namely,
\begin{align*}
\EP_k(\varphi)&=\{[\alpha]^k\in\calR[\varphi^k]\mid [\alpha]^k \text{ is irreducible (and essential)}\}\\
&=\{[\alpha]^k\in\calR[\varphi^k]\mid [\alpha]^k \text{ has height $k$}\}\\
&=\calI\calR(\varphi^k),
\end{align*}
and $\#\calI\calR(\varphi^k)=\#\EP_k(f)$.
Hence the set of (essential) Reidemeister classes of $\varphi^k$ can be identified
with a disjoint union of irreducible classes, that is, $\calR[\varphi^k]$ is decomposed by heights:
$$
\calR[\varphi^k]=\bigsqcup_{d\mid k}\ \calI\calR(\varphi^d),
$$
and hence its cardinality satisfies
$$
R(\varphi^k)=\#\calR[\varphi^k]=\sum_{d\mid k}\# \calI\calR(\varphi^d).
$$

Recall that if we denote by $O_k(\varphi)$ the number of essential
and irreducible periodic orbits of $\calR[\varphi^k]$,
i.e., if $O_k(\varphi)=\#\{\langle[\alpha]^k\rangle\mid [\alpha]^k\in \EP_k(\varphi)\}$
then by definition, the {\bf prime Nielsen--Jiang periodic number} of period $k$ is
$$
\NP_k(\varphi)=k\x O_k(\varphi).
$$
As observed earlier, each such orbit $\langle[\alpha]^k\rangle$ has length $k$.
Therefore, $\NP_k(\varphi)=\#\calI\calR(\varphi^k)$ and topologically $\NP_k(f)=\#\EP_k(f)$,
the number of essential periodic points of $f$ with minimal period $k$.

\begin{Thm}\label{NP}
Let $\varphi:\Pi\to\Pi$ be an endomorphism on a poly-Bieberbach group such that all $R(\varphi^k)$ are finite.
Then
$$
\NP_k(\varphi)=\# \calI\calR(\varphi^k).
$$
Let $f:\Pi\bs{S}\to\Pi\bs{S}$ be a map on an infra-solvmanifold $\Pi\bs{S}$ of type $\R$. Then
$$
\NP_k(f)=\# \EP_k(f).
$$
\end{Thm}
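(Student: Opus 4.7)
The theorem is essentially a bookkeeping statement collecting observations laid out earlier in the paper, so the plan is to assemble rather than introduce new machinery. The two equalities will be proved in sequence; the second one reduces to the first via the identifications set up in Section~\ref{poly-B}.

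For the algebraic equality $\NP_k(\varphi) = \#\calI\calR(\varphi^k)$, I would start from the definition $\NP_k(\varphi) = k\cdot O_k(\varphi)$, where $O_k(\varphi)$ counts the $[\varphi]$-orbits inside $\EP_k(\varphi)$. Two facts recorded earlier do the work. First, because $[\varphi]$ commutes with the boosting functions $\iota_{m,n}$ (observed in Section~\ref{prelim}), the property of being reducible to $m$ is $[\varphi]$-invariant; consequently $\EP_k(\varphi) = \calI\calR(\varphi^k)$ is itself a $[\varphi]$-invariant subset of $\calR[\varphi^k]$ and partitions into $O_k(\varphi)$ orbits. Second, the paragraph preceding the theorem statement shows that an irreducible class $[\alpha]^k$ satisfies $\ell([\alpha]^k) = k$, so that $\#\langle[\alpha]^k\rangle = k$. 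Multiplying the two counts yields $\#\calI\calR(\varphi^k) = k\cdot O_k(\varphi) = \NP_k(\varphi)$.

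For the topological equality $\NP_k(f) = \#\EP_k(f)$, I would invoke Theorem~\ref{av} under the running assumption that all $R(\varphi^n)$ are finite to conclude that $R(\varphi^n) = N(f^n)$ and hence every Reidemeister class of $\varphi^n$ is essential. Proposition~\ref{ess} then guarantees that each essential fixed point class of $f^n$ is a single point. The bijection $\eta$ between periodic point classes of $f^n$ and Reidemeister classes of $\varphi^n$ therefore restricts to a bijection $\EP_k(f) \longleftrightarrow \calI\calR(\varphi^k)$: irreducibility of $[\alpha]^k$ matches the single point $x$ of $p(\fix(\alpha\tilde f^k))$ having minimal period exactly $k$, since as noted in Section~\ref{poly-B} the depth of $[\alpha]^k$ equals the length $\ell$ and equals the minimal period of $x$. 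Combining with the first equality gives $\NP_k(f) = \#\calI\calR(\varphi^k) = \#\EP_k(f)$.

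There is no real obstacle; the only step that demands attention is the identification $\EP_k(\varphi) = \calI\calR(\varphi^k)$ on the topological side. This passes through Proposition~\ref{ess}: without the singleton property for essential fixed point classes one cannot infer irreducibility of a Reidemeister class from the minimal period of a point in the corresponding fixed point set, because a class could then mix periodic points of several different minimal periods. Granted that proposition, the theorem collapses to counting elements in $[\varphi]$-orbits of known size $k$.
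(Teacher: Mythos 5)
Your proposal is correct and follows essentially the same route as the paper: the algebraic equality is exactly the paper's observation that each irreducible (hence essential) orbit has length equal to its height $k$, so $\NP_k(\varphi)=k\cdot O_k(\varphi)=\#\calI\calR(\varphi^k)$, and the topological equality comes from the same identification $\#\calI\calR(\varphi^k)=\#\EP_k(f)$ via Theorem~\ref{av} (all classes essential) and Proposition~\ref{ess} (essential classes are singletons). No gaps; your emphasis on the singleton property as the crucial step matches the paper's reasoning.
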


Consider all periodic orbits $\langle[\alpha]^m\rangle$ of $\varphi$ with $m\mid k$.
A set $\frakS$ of periodic orbits $\langle[\alpha]^m\rangle$ of $\varphi$ with $m\mid k$
is said to be a set of $k$-representatives
if every essential orbit $\langle[\beta]^m\rangle$ with $m\mid k$ is reducible to some element of $\frakS$.
Then the {\bf full Nielsen--Jiang periodic number} of period $k$ is defined to be
$$
\NF_k(\varphi)=\min\left\{\sum_{\calA\in\frakS}d(\calA)
\mid \frakS \text{ is a set of $k$-representatives}\right\}.
$$
Recall that all periodic orbits are essential,
and every periodic orbit $\langle[\beta]^m\rangle$ is boosted to a periodic orbit $\langle[\alpha]^k\rangle$.
Hence to compute $\NF_k(\varphi)$, we need first to consider only $\calR[\varphi^k]$
and a set of representatives $[\alpha_i]^k$ of the orbits in $\calR[\varphi^k]$.
Then $\NF_k(\varphi)$ is the sum of depths of all $[\alpha_i]^n$.
Topologically, $\NF_k(f)$ is the sum of minimal periods of all essential periodic point classes
$\langle x\rangle$ in $\fix_e(f^k)$.

\section{The Nielsen theory of iterations on an infra-solvmanifold of type $\R$}\label{N_itetate}

In this section, we shall assume that $f:M\to M$ is a continuous map on an infra-solvmanifold $M=\Pi\bs{S}$
of type $\R$ with holonomy group $\Phi$. Then $f$ admits an affine homotopy lift $(d,D):S\to S$.
Concerning the Nielsen numbers $N(f^k)$ of all iterates of $f$, we begin with the following facts:
\medskip

\noindent
{\bf Averaging Formula:} (\cite[Theorem~4.2]{LL-Nagoya})
\begin{align*}
N(f^k)=\frac{1}{\#\Phi}\sum_{A\in\Phi}|\det(I-A_*D_*^k)|.
\end{align*}
\smallskip

\noindent
{\bf Gauss Congruences:} (\cite[Theorem~11.4]{FL})
\begin{align*}
\sum_{d\mid k}\mu\!\left(\frac{k}{d}\right)N(f^d)\equiv0\mod{k}.
\end{align*}
{Indeed, we have shown in \cite[Theorem~11.4]{FL} that the left-hand side is non-negative because it is equal to the number of isolated periodic points of $f$ with least period $k$.
By \cite[Lemma~2.1]{PW}, the sequence $\{N(f^k)\}$ is exactly realizable.}
\medskip

\noindent
{\bf Rationality of Nielsen zeta function:} (\cite[Theorem~4.5]{DeDu},\cite{FL})
\begin{align*}
&N_f(z)=\exp\left(\sum_{k=1}^\infty\frac{N(f^k)}{k}z^k\right)
\end{align*}
is a rational function with coefficients in $\bbq$.

Using these facts as our main tools, we study the asymptotic behavior of the sequence $\{N(f^k)\}$,
the essential periodic orbits of $f$ and the homotopy minimal periods of $f$
by using the Nielsen theory of maps $f$ on infra-solvmanifolds of type $\R$.
We will give a brief description of the main results in this section
whose details and proofs can be found in \cite{FL2}.

Consider the sequences of algebraic multiplicities $\{A_k(f)\}$
and Dold multiplicities $\{I_k(f)\}$ associated to the sequence $\{N(f^k)\}$:
$$
A_k(f)=\frac{1}{k}\sum_{d\mid k}\mu\!\left(\frac{k}{d}\right)N(f^d),\quad
I_k(f)=\sum_{d\mid k}\mu\!\left(\frac{k}{d}\right)N(f^d).
$$
Then $I_k(f)=kA_k(f)$ and all $A_k(f)$ are integers by \eqref{DN}.
From the M\"{o}bius inversion formula, we immediately have
\begin{align}\label{M}
N(f^k)=\sum_{d\mid k}\ d\!\!~A_d(f).\tag{M}
\end{align}

On the other hand, since $N_f(0)=1$ by definition, $z=0$ is not a zero nor a pole
of the rational function $N_f(z)$. Thus we can write
$$
N_f(z)=\frac{u(z)}{v(z)}=\frac{\prod_i(1-\beta_iz)}{\prod_j(1-\gamma_jz)}
=\prod_{i=1}^r(1-\lambda_iz)^{-\rho_i}
$$
with all $\lambda_i$ distinct nonzero algebraic integers {(see for example \cite{BL} or \cite[Theorem~2.1]{BaBo})}
and $\rho_i$ nonzero integers.
This implies that
\begin{align*}
N(f^k)=\sum_{i=1}^{r(f)}\rho_i\lambda_i^k.
\end{align*}
Note that $r(f)$ is {the number of zeros and poles of $N_f(z)$}.
Since $N_f(z)$ is a homotopy invariant, so is $r(f)$.

We define
$$
\lambda(f)=\max\{|\lambda_i|\mid i=1,\cdots,r(f)\}.
$$
The number $\lambda(f)$ will play a similar role as the
``essential spectral radius" in \cite{JM} or the ``reduced spectral radius" in \cite{BaBo}.

The following theorem shows that {$1/\lambda(f)$} is the ``radius" of the Nielsen zeta function $N_f(z)$.
Note also that $\lambda(f)$ is a homotopy invariant.

\begin{Thm}[{\cite[Theorem~3.2]{FL2}}]
\label{Radius}
Let $f$ be a map on an infra-solvmanifold of type $\R$ with an affine homotopy lift $(d,D)$.
Let {\rm $\textsf{R}$} denote the radius of convergence of the Nielsen zeta function $N_f(z)$ of $f$.
Then $\lambda(f)=0$ or $\lambda(f)\ge1$, and
\begin{align*}
\frac{1}{\text{\rm{$\textsf{R}$}}}=\lambda(f).
\end{align*}
In particular, {\rm $\textsf{R}>0$}. If $D_*$ has no eigenvalue $1$, then
$$
\frac{1}{\text{\rm{$\textsf{R}$}}}=\sp\left(\bigwedge D_*\right)=\lambda(f).
$$
\end{Thm}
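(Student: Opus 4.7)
The plan combines the rational factorization $N_f(z)=\prod_i(1-\lambda_iz)^{-\rho_i}$ recalled just before the theorem with the nonnegativity of each $N(f^k)$ and Pringsheim's theorem for power series with nonnegative coefficients. For the dichotomy, if $0<\lambda(f)<1$ then $N(f^k)=\sum_i\rho_i\lambda_i^k\to0$ exponentially, and being a nonnegative integer it vanishes for all large $k$. Hence $g(z):=\sum_{k\ge1}\tfrac{N(f^k)}{k}z^k$ is a polynomial and $N_f=\exp(g)$ is entire; combined with the rationality of $N_f$ cited above, this forces $N_f\equiv 1$, so no $\lambda_i$ exists and $\lambda(f)=0$, contradicting $0<\lambda(f)$.

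Next I would compute $\rad(g)$. The crude bound $N(f^k)\le(\sum_i|\rho_i|)\lambda(f)^k$ gives $\limsup_kN(f^k)^{1/k}\le\lambda(f)$. For the reverse, assume this limsup is $\le\mu<\lambda(f)$ and let $S=\{i:|\lambda_i|>\mu\}\ne\emptyset$. The rational function $h(z)=\sum_{i\in S}\rho_i/(1-\lambda_iz)$ would have Taylor coefficients $\sum_{i\in S}\rho_i\lambda_i^k=N(f^k)-\sum_{i\notin S}\rho_i\lambda_i^k=O(\mu^k)$ and thus be analytic on $|z|<1/\mu$, contradicting the presence of its genuine poles at $1/\lambda_i$ with $|1/\lambda_i|<1/\mu$ for $i\in S$. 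Hence by Cauchy--Hadamard, $\rad(g)=1/\lambda(f)$.

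To identify $\textsf{R}=\rad(N_f)$ with $\rad(g)$: since $N_f=\exp(g)$ also has nonnegative Taylor coefficients and $N_f(0)=1$, we have $N_f(x)\ge 1$ for all $x\in[0,\textsf{R})$, so $N_f$ has no zero on this positive real segment. By Pringsheim applied to $g$, the point $z=\rad(g)$ is a singularity of $g=\log N_f$, hence a zero or a pole of $N_f$; it cannot be a zero in $[0,\textsf{R})$, and any pole lies at $|z|\ge\textsf{R}$. Therefore $\rad(g)=\textsf{R}$, giving $\textsf{R}=1/\lambda(f)$ as required. This bridging step is the main obstacle: a priori the dominant $|\lambda_i|$ could be realized only by indices $i$ with $\rho_i<0$, producing a zero rather than a pole on the critical circle and enlarging $\textsf{R}$ beyond $1/\lambda(f)$; the nonnegativity of $N(f^k)$, via Pringsheim, is precisely what excludes this scenario.

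For the last assertion, when $D_*$ has no eigenvalue $1$, the averaging formula together with the expansion $\det(I-A_*D_*^k)=\sum_j(-1)^j\tr\bigl((\bigwedge^jA_*)(\bigwedge^jD_*)^k\bigr)$ and uniform operator bounds over the finite group $\Phi$ yield $N(f^k)=O(\sp(\bigwedge D_*)^k)$, so $\lambda(f)\le\sp(\bigwedge D_*)$. Conversely, the real spectrum of $D_*$ (type $\R$) and the absence of $1$ make $|\det(I-D_*^k)|$ asymptotic to $\sp(\bigwedge D_*)^k$ along a subsequence of positive density of $k$ (restricting to odd $k$ handles any eigenvalue $-1$), and nonnegativity of each summand gives $N(f^k)\ge\tfrac{1}{\#\Phi}|\det(I-D_*^k)|$, forcing $\lambda(f)=\limsup_kN(f^k)^{1/k}\ge\sp(\bigwedge D_*)$.
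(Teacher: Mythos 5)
Your first three steps are essentially correct: the exclusion of $0<\lambda(f)<1$ (integrality forces $N(f^k)=0$ eventually, and a rational, entire, nonvanishing $N_f=\exp(\text{polynomial})$ must be $\equiv1$), the identification of the radius of $\log N_f$ with $1/\lambda(f)$ via the partial-fraction/pole contradiction (only a cosmetic fix: $O(\mu^k)$ should be $O((\mu+\epsilon)^k)$ with $\mu+\epsilon<\lambda(f)$), and the Pringsheim bridge between the radii of $\log N_f$ and $N_f$, which is exactly what rules out the ``dominant $\lambda_i$ only with $\rho_i<0$'' scenario. This is a slightly different route from the one the paper follows for the Reidemeister analogue (Theorem~\ref{Radius1}), where the radius is computed as $\limsup_k R(\varphi^k)^{1/k}$ by a direct case analysis on $\lambda$, but both are valid and of comparable depth.

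The genuine gap is in the final assertion. Type $\R$ means that $\ad X$ has only real eigenvalues for $X$ in the Lie algebra of $S$; it says nothing about the spectrum of $D_*$, which can perfectly well contain non-real eigenvalues (already for $S=\bbr^2$ and $D$ a rotation--dilation, i.e.\ for torus maps). Hence ``restricting to odd $k$'' only neutralizes a possible eigenvalue $-1$: if $D_*$ has a root of unity of order $q>2$ as an eigenvalue, then $\det(I-D_*^k)=0$ for all $k$ divisible by $q$, and if it has an eigenvalue $e^{2\pi i\theta}$ with $\theta$ irrational, the factor $|1-e^{2\pi ik\theta}|$ can be arbitrarily small, so the (correct) inequality $N(f^k)\ge\tfrac{1}{\#\Phi}\,|\det(I-D_*^k)|$ does not by itself yield $\limsup_k N(f^k)^{1/k}\ge\sp\bigl(\bigwedge D_*\bigr)$ along your proposed subsequence. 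To close this you must exhibit a subsequence of $k$ on which the unit-circle contribution $\prod_{|\mu|=1,\,\mu\ne1}|1-\mu^k|$ stays bounded below (for instance $k\equiv1$ modulo the least common multiple of the orders of the root-of-unity eigenvalues, combined with Weyl equidistribution for the irrational angles, which persists along arithmetic progressions), or simply invoke the asymptotic statement the paper itself relies on, namely \cite[Theorem~3.4]{FL2}, exactly as is done in the proof of Theorem~\ref{Radius2}. As written, the lower bound $\lambda(f)\ge\sp\bigl(\bigwedge D_*\bigr)$ is not established; the upper bound via the exterior-power expansion of $\det(I-A_*D_*^k)$ is fine.
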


Next, we recall the asymptotic behavior of the Nielsen numbers of iterates of maps.

\begin{Thm}[{\cite[Theorem~4.1]{FL2}}]
\label{N-BaBo2.6}
For a map $f$ of an infra-solvmanifold of type $\R$,
one of the following {three} possibilities holds:
\begin{enumerate}
\item[$(1)$] $\lambda(f)=0$, which occurs if and only if $N_f(z)\equiv1$.
\item[$(2)$] The sequence $\{N(f^k)/\lambda(f)^k\}$ has the same limit points
as a periodic sequence $\{\sum_j\alpha_j\epsilon_j^k\}$
where $\alpha_j\in\bbz,\epsilon_j\in\bbc$ and $\epsilon_j^q=1$ for some $q>0$.
\item[$(3)$] {The set of limit points of the sequence $\{N(f^k)/\lambda(f)^k\}$ contains an interval.}
\end{enumerate}
\end{Thm}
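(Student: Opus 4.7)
My plan starts from the trace-type formula
\begin{align*}
N(f^k)=\sum_{i=1}^{r(f)}\rho_i\lambda_i^k,
\end{align*}
derived in the excerpt by equating $\log N_f(z)=-\sum_i\rho_i\log(1-\lambda_i z)$ with $\sum_k N(f^k)z^k/k$. Case (1) is then essentially immediate: since every $\lambda_i$ is nonzero, $\lambda(f)=\max_i|\lambda_i|=0$ forces the index set to be empty, i.e. $N_f(z)\equiv 1$; the converse is obvious from the product formula.

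For cases (2) and (3), I would set $\lambda:=\lambda(f)>0$ and partition the indices into a ``top'' set $T=\{i:|\lambda_i|=\lambda\}$ and a ``low'' set $L=\{i:|\lambda_i|<\lambda\}$, writing $\epsilon_i=\lambda_i/\lambda$ so that $|\epsilon_i|=1$ for $i\in T$. Then
\begin{align*}
\frac{N(f^k)}{\lambda^k}=\sum_{i\in T}\rho_i\epsilon_i^k+O((\lambda^-/\lambda)^k),\qquad \lambda^-:=\max_{i\in L}|\lambda_i|<\lambda,
\end{align*}
so the limit set of $N(f^k)/\lambda^k$ coincides with that of $S(k):=\sum_{i\in T}\rho_i\epsilon_i^k$. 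Now I would dichotomize: if every $\epsilon_i$, $i\in T$, is a root of unity with common order $q$, then $S(k+q)=S(k)$ and (2) holds with $\alpha_j=\rho_j\in\bbz$. Otherwise some $\epsilon_{i_0}=e^{2\pi\sqrt{-1}\theta_{i_0}}$ has $\theta_{i_0}$ irrational, and I would appeal to Kronecker--Weyl equidistribution: choose a $\bbq$-basis $1,\alpha_1,\ldots,\alpha_m$ of the $\bbq$-span of $\{1\}\cup\{\theta_j:j\in T\}$, with $m\ge 1$. Passing to a subsequence $k\in N\bbz$ that clears denominators, $S(kN)$ is expressible as a trigonometric polynomial $P$ in $(k\alpha_1,\ldots,k\alpha_m)\in\bbt^m$, whose orbit is equidistributed in $\bbt^m$. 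The closure $\{S(kN):k\in\bbz\}$ equals $P(\bbt^m)$, which is a closed interval as a continuous image of a connected positive-dimensional compact torus, provided $P$ is nonconstant. Combined with the finitely many residue classes $k\bmod N$, the limit set of $N(f^k)/\lambda^k$ is a finite union of closed intervals, which in particular contains an interval and so gives (3).

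The principal obstacle I foresee is the nonconstancy of $P$ in case (3): one must rule out the possibility that multiplicative relations among the $\epsilon_j$'s conspire so that the non-root-of-unity Fourier modes of $S$ cancel. I would settle this using distinctness of the $\lambda_i$: the Fourier coefficient of $P$ at the multi-index encoding $\theta_{i_0}$ in the $\alpha$-basis is supported on those $j\in T$ with $\theta_j\equiv\theta_{i_0}\pmod{\bbq}$, i.e. on $j$ with $\epsilon_j=\zeta\epsilon_{i_0}$ for some root of unity $\zeta$; the resulting coefficient is a finite $\bbz$-linear combination of roots of unity weighted by the nonzero integers $\rho_j$, which cannot vanish without forcing distinct $\epsilon_j$'s to coincide, a contradiction. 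A secondary, more bookkeeping-type issue is the passage from Weyl equidistribution on $\bbt^m$ to the actual limit set of $S$ along the full integer sequence (rather than just $k\in N\bbz$), but this is handled by taking unions over residues modulo $N$ and does not alter the interval conclusion.
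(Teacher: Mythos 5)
Your reduction to the top-term sum $S(k)=\sum_{i\in T}\rho_i\epsilon_i^k$, the handling of case (1), and the root-of-unity dichotomy are all fine and follow the same Babenko--Bogaty\v{\i}-style route as the proof the paper cites. The genuine gap is in your nonconstancy argument for case (3). Your key claim --- that a $\bbz$-linear combination of distinct roots of unity with nonzero integer coefficients cannot vanish --- is simply false: $1+\omega+\omega^2=0$ for $\omega$ a primitive cube root of unity. Worse, the vanishing you are trying to exclude really can occur on the one subsequence $k\in N\bbz$ you restrict to. Take the top ratios $\epsilon,\ \omega\epsilon,\ \bar\epsilon,\ \bar\omega\bar\epsilon$ with $\rho=+1,-1,+1,-1$, where $\epsilon=e^{2\pi\sqrt{-1}\theta}$, $\theta$ irrational. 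Then $S(k)=(1-\omega^k)\epsilon^k+\overline{(1-\omega^k)\epsilon^k}$, and on the denominator-clearing subsequence $k\in 3\bbz$ your trigonometric polynomial $P$ is identically zero: the Fourier coefficient you claim is nonzero equals $\rho_1+\rho_2=0$. The interval in the limit set exists, but it comes entirely from the residue classes $k\equiv 1,2 \pmod 3$, which your argument never certifies to carry a nonconstant polynomial; so as written the proof of (3) does not go through.

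The repair is to argue coset by coset rather than at a single Fourier mode. Let $\Gamma$ be the closure of $\{(\epsilon_i^k)_{i\in T}\}$ in the torus, $H$ its identity component, $N=[\Gamma:H]$, and group the top indices into classes $\{\epsilon_i=\zeta_i\epsilon_0\}$ where the $\zeta_i$ are distinct $N$-th roots of unity; one checks $\chi_i|_H=\chi_{i'}|_H$ exactly for indices in the same class, and $\chi_{i_0}|_H$ is nontrivial when $\epsilon_{i_0}$ is not a root of unity. If the restriction of $P(z)=\sum_i\rho_i z_i$ to \emph{every} coset $g^jH$ were constant, then for the class of $i_0$ the coefficients $c_j=\sum_i\rho_i\zeta_i^j$ would vanish for all $j=0,\dots,N-1$; but the matrix $(\zeta_i^j)$ is an invertible Vandermonde (linear independence of characters of $\bbz/N\bbz$), forcing all $\rho_i=0$, a contradiction. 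Hence some coset carries a nonconstant real continuous function on a connected positive-dimensional compact group, whose image is a nondegenerate interval, and Kronecker--Weyl equidistribution along that residue class gives case (3). With this replacement for your nonvanishing claim, the rest of your outline (including taking unions over residues mod $N$) is sound.
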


In order to give an estimate from below for the number of {\bf essential periodic orbits}
of maps on infra-solvmanifolds of type $\R$, we recall the following:
\begin{Thm}[{\cite{SS}}]\label{SS}
If $f:M\to M$ is a $C^1$-map on a smooth compact manifold $M$ and $\{L(f^k)\}$ is unbounded,
then the set of periodic points of $f$, $\bigcup_k\fix(f^k)$, is infinite.
\end{Thm}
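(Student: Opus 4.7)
The plan is to argue by contradiction using the Lefschetz--Hopf fixed point formula together with the Shub--Sullivan local boundedness lemma for indices of iterates of $C^1$ maps.

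First I would assume, for contradiction, that $P:=\bigcup_k\fix(f^k)$ is a finite set, say $P=\{x_1,\dots,x_m\}$. Since every fixed point of $f^k$ lies in $P$, each $x_i$ is an isolated fixed point of $f^k$ for every $k\ge1$. By the Lefschetz--Hopf theorem applied on the smooth compact manifold $M$, for every $k$ we have
\begin{equation*}
L(f^k)=\sum_{x\in\fix(f^k)}\ind(f^k,x)=\sum_{i:\,f^k(x_i)=x_i}\ind(f^k,x_i),
\end{equation*}
so $|L(f^k)|\le\sum_{i=1}^m|\ind(f^k,x_i)|$, where the sum is over the fixed set (indices for $i$ with $f^k(x_i)\ne x_i$ are set to $0$).

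Next I would invoke the central local result of Shub--Sullivan: if $g$ is a $C^1$ self-map of an open subset of $\bbr^n$ and $x$ is an isolated fixed point of every iterate $g^k$, then the sequence $\{\ind(g^k,x)\}_{k\ge1}$ is bounded. Applying this to $f$ in local coordinates around each $x_i$, one obtains a constant $C_i$ with $|\ind(f^k,x_i)|\le C_i$ for all $k$. Summing, $|L(f^k)|\le C_1+\cdots+C_m$ for all $k$, contradicting the assumption that $\{L(f^k)\}$ is unbounded.

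The main obstacle, and the substance of the argument, is establishing the Shub--Sullivan local boundedness of iterate indices, which is precisely where the $C^1$ hypothesis is essential (it is known to fail in the merely continuous category). The proof of that lemma proceeds by a blow-up/perturbation analysis: decompose the tangent map $Df(x_i)$ according to eigenvalues on, inside, and outside the unit circle, and show that the contributions to $\ind(f^k,x_i)$ from each invariant subspace can be controlled in $k$, using the $C^1$ regularity to compare $f^k$ with the linearization up to higher-order terms on shrinking neighborhoods. Once that local result is granted, the global contradiction above is immediate; this reduction of the global statement to the local index-boundedness lemma is the content I would record as the proof.
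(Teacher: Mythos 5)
Your proposal is correct and is essentially the argument of Shub--Sullivan itself: the paper states this theorem as a citation of \cite{SS} and gives no proof, and your reduction of the global statement via the Lefschetz--Hopf formula to the local boundedness of the index sequence $\{\ind(f^k,x)\}$ at an isolated periodic point is exactly how the cited source proceeds. One small point to tidy: since $x_i$ is in general only periodic, say of minimal period $p_i$, the local lemma should be applied to $g=f^{p_i}$ at $x_i$ (noting $\ind(f^k,x_i)=\ind(g^{k/p_i},x_i)$ for $p_i\mid k$, and $x_i\notin\fix(f^k)$ otherwise), which yields the same uniform bound.
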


This theorem is not true for continuous maps.
Consider the one-point compactification of the map of the complex plane $f(z)=2z^2/\bb z\bb$.
This is a continuous degree two map of $S^2$ with only two periodic points but with $L(f^k)=2^{k+1}$.

However, when $M$ is an infra-solvmanifold of type $\R$,
the theorem is true for all {continuous} maps $f$ on $M$.
In fact, using the averaging formula, we obtain
\begin{align*}
|L(f^k)|\le\frac{1}{|\Phi|}\sum_{A\in\Phi}|\det(I-A_*D_*^k)|=N(f^k).
\end{align*}
If $L(f^k)$ is unbounded, then so is $N(f^k)$ and hence the number
of essential fixed point classes of all $f^k$ is infinite.

Recall that any map $f$ on an infra-solvmanifold of type $\R$ is homotopic to a map $\bar{f}$
induced by an affine map $(d,D)$. By \cite[Proposition~9.3]{FL},
every essential fixed point class of $\bar{f}$ consists of a single element $x$ with index {$\sgn\det(I-df_x)$}.
Hence $N(f)=N(\bar{f})$ is the number of essential fixed point classes of $\bar{f}$.
It is a classical fact that a homotopy between $f$ and $\bar{f}$ induces a one-one correspondence
between the fixed point classes of $f$ and those of $\bar{f}$, which is index preserving.
Consequently, we obtain
$$
|L(f^k)|\le N(f^k)\le \#\fix(f^k).
$$
This suggests the following conjectural inequality (see \cite{Shub,SS})
for infra-solvmanifolds of type $\R$:
$$
\limsup_{k\to\infty}\frac{1}{k}\log|L(f^k)|\le \limsup_{k\to\infty}\frac{1}{k}\log\#\fix(f^k).
$$

We denote by $\calO(f,k)$ the set of all essential periodic orbits of $f$ with length $\le k$.
Thus
$\calO(f,k)=\{\langle\bbf\rangle\mid \bbf \text{ is a essential fixed point class of $f^m$ with $m\le k$}\}$.
We can strengthen Theorem~\ref{SS} as follows:

\begin{Thm}
\label{N_BaBo4.2}
Let $f$ be a map on an infra-solvmanifold of type $\R$.
Suppose that the sequence $N(f^k)$ is unbounded. Then there exists a natural number $N_0$ such that
$$
k\ge N_0\Longrightarrow \#\calO(f,k)\ge \frac{k-N_0}{r(f)}.
$$
\end{Thm}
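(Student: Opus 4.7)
The plan is to pass from $\#\calO(f,k)$ to the prime orbit counts $A_d(f) = \NP_d(f)/d$, and then to exploit the rationality of the Nielsen zeta function to lower-bound the density of the set $\{d : A_d(f) > 0\}$ by $1/r(f)$.

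\emph{Setup and reduction.} From the decomposition preceding Theorem~\ref{NP}, together with $N(f^k) = \sum_{d\mid k}\NP_d(f) = \sum_{d\mid k} d\, A_d(f)$, the nonnegative integer $A_d(f)$ is precisely the number of essential periodic orbits of minimal period exactly $d$, so
$$\#\calO(f,k) = \sum_{d=1}^{k} A_d(f).$$
Unboundedness of $N(f^k)$ forces $\lambda(f) > 1$: by Theorem~\ref{Radius} one has $\lambda(f)=0$ or $\lambda(f)\ge 1$, but $\lambda(f)=0$ makes $N_f(z)\equiv 1$, and $\lambda(f)=1$ gives the uniform bound $|N(f^k)| \le \sum_i |\rho_i|$ (every algebraic integer $\lambda_i$ with $|\lambda_i|\le 1$ is a root of unity by Kronecker).

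\emph{Reduction to a key lemma.} I would reduce the theorem to the density statement: there exists $N_0$ such that any window $\{d, d+1, \ldots, d+r-1\}$ of $r$ consecutive integers with $d\ge N_0$ contains some $d'$ with $A_{d'}(f)\ge 1$. Granting this, splitting $(N_0, k]$ into consecutive blocks of length $r$ and extracting at least one orbit per block yields $\#\calO(f,k)\ge \lfloor (k-N_0)/r \rfloor$, which gives the claimed inequality after enlarging $N_0$ by $r$ to absorb the floor.

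\emph{Strategy for the key lemma and main obstacle.} I would argue by contradiction. If there are arbitrarily large windows $[K-r+1, K]$ on which $A_d(f)=0$ throughout, then the $r$ M\"obius identities
$$I_d(f) = \sum_{e \mid d}\mu(d/e)\, N(f^e) = \sum_{i=1}^r \rho_i\, \Phi_d(\lambda_i) = 0, \qquad d\in[K-r+1,K],$$
hold simultaneously, where $\Phi_d(x) = \sum_{e\mid d}\mu(d/e) x^e$. The aim is to combine these $r$ vanishing relations with the order-$r$ linear recurrence $\prod_i(x-\lambda_i)$ annihilating $\{N(f^k)\}$ and the growth $\limsup_k |N(f^k)|^{1/k} = \lambda(f) > 1$ so as to set up a Vandermonde-type system in the dominant eigenvalues $\{\lambda_i : |\lambda_i|=\lambda(f)\}$ that forces their total contribution to cancel, contradicting $\lambda(f)>1$. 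The principal obstacle is that $\Phi_d(x)$ depends on the full multiplicative structure of $d$, not merely on its size, so the vanishing of $I_d(f)$ on $r$ consecutive integers is not the zero set of a single linear recurrence in $d$ and is not directly accessible via Skolem--Mahler--Lech. A faithful execution (carried out in \cite{FL2}) adapts the combinatorial counting scheme of Babenko--Bogatyi for the Lefschetz zeta function to the Nielsen setting; this adaptation is the technical heart of the argument and is precisely where the denominator $r=r(f)$ appears.
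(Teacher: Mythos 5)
Your overall skeleton is the same as the paper's: reduce $\#\calO(f,k)$ to counting indices $d\le k$ with $A_d(f)\ne0$, and invoke the window-density result of \cite{FL2} (Corollary~4.6 there; the analogues here are Theorem~\ref{BaBo2.7} plus Proposition~\ref{BaBo2.8}) to get at least one such $d$ in every window of length $n(f)\le r(f)$ beyond some $N_0$. But there is a genuine gap in your ``setup'' step, and it sits exactly where the paper's written proof does its work. You assert, citing the decomposition preceding Theorem~\ref{NP}, that $A_d(f)$ is nonnegative and equals the number of essential periodic orbits of minimal period exactly $d$, so that $\#\calO(f,k)=\sum_{d\le k}A_d(f)$. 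The Section~3 material you appeal to is developed under the standing hypothesis that all $R(\varphi^k)$ are finite (so that every Reidemeister class is essential and $\calR[\varphi^k]=\bigsqcup_{d\mid k}\calI\calR(\varphi^d)$); the present theorem assumes only that $\{N(f^k)\}$ is unbounded. Without finiteness one must argue directly with essential fixed point classes: pass to the affine-homotopic representative so that each essential class is a single point, note that $f$ permutes $\fix_e(f^k)$, use that $x\in\fix_e(f^q)$ iff $p\mid q$ where $p$ is the least period of $x$, and compute orbit-wise that
\begin{equation*}
A_m(f,\langle x\rangle)=\frac{p}{m}\sum_{p\mid n\mid m}\mu\!\left(\frac{m}{n}\right),
\end{equation*}
which is $1$ when $m=p$ and $0$ otherwise; summing over orbits then yields your identification of $A_d(f)$ (and, crucially, the fact that each orbit is counted by exactly one $d$, which is what makes distinct windows produce distinct orbits). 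This M\"{o}bius bookkeeping is the combinatorial heart of the paper's proof, and your proposal assumes it rather than proving it.

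Two smaller points. The Skolem--Mahler--Lech/Vandermonde detour for the key lemma is a dead end, as you yourself note, and it is also unnecessary: the density statement you want is literally \cite[Corollary~4.6]{FL2} (large values of $N(f^m)/\lambda(f)^m$ occur in every window of length $n(f)$, and a large normalized value forces $I_m(f)\ne0$), and the paper simply cites it, as you do in the end -- so no new argument is needed there, only the correct attribution. Finally, the parenthetical Kronecker claim (``every algebraic integer of modulus $\le1$ is a root of unity'') is false as stated (one needs all conjugates to have modulus $\le1$); it is also not needed, since $\lambda(f)\le1$ already gives $|N(f^k)|\le\sum_i|\rho_i|$, contradicting unboundedness, which is all the argument requires.
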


\begin{proof}
As mentioned earlier, we may assume that every essential fixed point class $\bbf$ of any $f^k$ consists of a single element $\bbf=\{x\}$. Denote by $\fix_e(f^k)$ the set of essential fixed point (class) of $f^k$. Thus $N(f^k)=\#\fix_e(f^k)$. Recalling also that $f$ acts on the set $\fix_e(f^k)$ from the proof of \cite[Theorem~11.4]{FL}, we have
$$
\calO(f,k)=\{\langle x\rangle\mid x \text{ is a essential periodic point of $f$ with length $\le k$}\}.
$$

Observe further that if $x$ is an essential periodic point of $f$ with least period $p$, then $x\in\fix_e(f^{q})$ if and only if $p\mid q$. The length of the orbit $\langle x\rangle$ of $x$ is $p$, and
\begin{align*}
&\fix_e(f^k)=\bigcup_{d\mid k}\ \fix_e(f^d),\\
&\fix_e(f^d)\bigcap\fix_e(f^{d'})=\fix_e(f^{\gcd(d,d')}).
\end{align*}
Recalling that
\begin{align*}
A_m(f)&=\frac{1}{m}\sum_{k\mid m}\mu\!\left(\frac{m}{k}\right)N(f^k)
=\frac{1}{m}\sum_{k\mid m}\mu\!\left(\frac{m}{k}\right)\#\fix_e(f^k),
\end{align*}
we define $A_m(f,\langle x\rangle)$ for any $x\in\bigcup_i\fix_e(f^i)$ to be
\begin{align*}
A_m(f,\langle x\rangle)&=\frac{1}{m}\sum_{k\mid m}\mu\!\left(\frac{m}{k}\right)\#\!\left(\langle x\rangle\cap\fix_e(f^k)\right).
\end{align*}
Then we have
\begin{align*}
A_m(f)=\sum_{\substack{\langle x\rangle\\ x\in\fix_e(f^m)}} A_m(f,\langle x \rangle).
\end{align*}

We begin with new notation. For a given integer $k>0$ and $x\in\bigcup_m\fix_e(f^m)$, let
\begin{align*}
&\calA(f,k)=\left\{m\le k\mid A_m(f)\ne0\right\},\\
&\calA(f,\langle x\rangle)=\left\{m\mid A_m(f,\langle x\rangle)\ne0\right\}.
\end{align*}
Notice that if $A_m(f)\ne0$ then there exists an essential periodic point $x$ of $f$ with period $m$ such that $A_m(f,\langle x\rangle)\ne0$. Consequently, we have
$$
\calA(f,k)\subset \bigcup_{\langle x\rangle\in\calO(f,k)} \calA(f,\langle x\rangle)
$$

Since $N(f^k)$ is unbounded, we have that $\lambda(f)>1$ by the definition of $\lambda(f)$. {By \cite[Corollary~4.6]{FL2}, there is $N_0$ such that if $n\ge N_0$ then there is $i$ with $n\le i\le n+n(f)-1$} such that $A_i(f)\ne0$. This leads to the estimate
$$
\#\calA(f,k)\ge\frac{k-N_0}{n(f)}\quad \forall k\ge N_0.
$$

Assume that $x$ has least period $p$. Then we have
\begin{align*}
A_m(f,\langle x\rangle)
&=\frac{1}{m}\sum_{p\mid n\mid m}\mu\!\left(\frac{m}{n}\right)\#\langle x\rangle
=\frac{p}{m}\sum_{p\mid n\mid m}\mu\!\left(\frac{m}{n}\right).
\end{align*}
Thus if $m$ is not a multiple of $p$ then by definition $A_m(f,\langle x\rangle)=0$. It is clear that $A_p(f,\langle x\rangle)=\mu(1)=1$, i.e., $p\in\calA(f,\langle x\rangle)$. Because $p\mid n\mid rp\Leftrightarrow n=r'p$ with $r'\mid r$, we have $A_{rp}(f,\langle x\rangle)=1/r \sum_{p\mid n\mid rp}\mu(rp/n)=1/r\sum_{r'\mid r}\mu(r/r')$ which is $0$ when and only when $r>1$. Consequently, $\calA(f,\langle x\rangle)=\{p\}$.

In conclusion, we obtain the required inequality
\begin{align*}
\frac{k-N_0}{r(f)}&\le \#\calA(f,k)\le\#\calO(f,k).\qedhere
\end{align*}
\end{proof}

Finally, we study (homotopy) minimal periods of maps $f$ on infra-solvmanifolds of type $\R$.
We seek to determine $\HPer(f)$ only from the knowledge of the sequence $\{N(f^k)\}$.
This approach was used in \cite{ABLSS, Hal, JL} for maps on tori,
in \cite{JM1,JM2,JKM,JM,LZ-china,LZ-agt} for maps on nilmanifolds
and some solvmanifolds, and in \cite{LL-JGP,LZ-jmsj} for expanding maps on infra-nilmanifolds.

Utilizing new results obtained from the Gauss congruences and the rationality of the Nielsen zeta function,
together with Dirichlet's prime number theorem, we obtain:
\begin{Thm}
\label{N_3.2.48}
Let $f$ be a map on an infra-solvmanifold of type $\R$.
Suppose that 
{the sequence $\{N(f^k)/\lambda(f)^k\}$ is asymptotically periodic $($i.e., Case $(2)$ of Theorem~\ref{N-BaBo2.6}$)$}.
Then there exist $m$ and an infinite sequence $\{p_i\}$ of primes such that $\{mp_i\}\subset\Per(f)$.
Furthermore, $\{mp_i\}\subset\HPer(f)$.
\end{Thm}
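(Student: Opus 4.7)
The plan is to exhibit an integer $m$ and infinitely many primes $p$ with $mp\in\HPer(f)$ (which automatically gives $mp\in\Per(f)$) by showing that
\[
I_k(f):=\sum_{d\mid k}\mu\!\left(\frac{k}{d}\right)N(f^d)
\]
is strictly positive for $k=mp$. By Proposition~\ref{ess} applied to an affine homotopy representative, every essential fixed point class of $f^k$ is a singleton, so Möbius inversion gives $I_k(f)=\#\EP_k(f)$, the number of essential periodic points of $f$ with minimal period $k$. In particular $I_k(f)\ge 0$, and $I_k(f)>0$ forces an essential periodic point of minimal period exactly $k$. Because $N(f^k)$, hence $I_k(f)$, is a homotopy invariant, this conclusion holds for every map homotopic to $f$, so $I_k(f)>0\Rightarrow k\in\HPer(f)\subset\Per(f)$.

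The first step is the structure of the dominant eigenvalues. Writing $N(f^k)=\sum_{i=1}^{r(f)}\rho_i\lambda_i^k$ and invoking the asymptotic periodicity hypothesis, those eigenvalues with $|\lambda_i|=\lambda(f)$ are of the form $\lambda_i=\lambda(f)\epsilon_i$ with $\epsilon_i$ roots of unity; let $q$ be the LCM of their orders. Define
\[
c_r=\sum_{i:|\lambda_i|=\lambda(f)}\rho_i\epsilon_i^r,\qquad r\in\bbz/q\bbz,
\]
so that $N(f^k)/\lambda(f)^k - c_{k\bmod q}\to 0$ at rate $O((\mu/\lambda(f))^k)$, where $\mu<\lambda(f)$ bounds the remaining $|\lambda_i|$. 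Each $c_r$ is a non-negative real (as limit of a non-negative sequence), and at least one $c_r$ is strictly positive, since otherwise $N(f^k)=o(\lambda(f)^k)$, contradicting the definition of $\lambda(f)$.

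Next, fix $r_0\in\bbz/q\bbz$ with $c_{r_0}>0$, set $g=\gcd(r_0,q)$, $s_0=r_0/g$, and take $m=g$; note $\gcd(s_0,q/m)=1$. By Dirichlet's theorem on primes in arithmetic progressions, the residue class $s_0\bmod(q/m)$ contains infinitely many primes $p$, and discarding finitely many we may assume $p>q$, so $\gcd(p,m)=1$. For every such $p$, $mp\equiv ms_0=r_0\pmod q$. For prime $p$ coprime to $m$, the divisors of $mp$ are $\{d:d\mid m\}\cup\{pd:d\mid m\}$, giving
\[
I_{mp}(f)=N(f^{mp})-N(f^m)-\!\!\sum_{\substack{d\mid m\\ d<m}}I_{pd}(f).
\]
The largest proper divisor of $m$ is at most $m/2$, and $I_{pd}(f)\le N(f^{pd})=O(\lambda(f)^{pd})$, so the subtracted sum is $O(\lambda(f)^{pm/2})=o(\lambda(f)^{mp})$ as $p\to\infty$. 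Meanwhile, by the structure step and the congruence $mp\equiv r_0\pmod q$,
\[
N(f^{mp})=c_{r_0}\lambda(f)^{mp}+o(\lambda(f)^{mp}).
\]
Since $c_{r_0}>0$ and $\lambda(f)>1$ (otherwise $N(f^k)$ is bounded and $\HPer(f)$ finite, making the theorem vacuous), we conclude $I_{mp}(f)>0$ for all sufficiently large primes $p$ in the chosen class. Enumerating these primes as $\{p_i\}$ gives $\{mp_i\}\subset\HPer(f)\subset\Per(f)$.

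The main obstacle is the choice of $m$ when the ``average value'' $c_0=\sum\rho_i$ happens to vanish; one cannot simply take $m=q$ and hope $N(f^{mp})$ grows like $\lambda(f)^{mp}$. The remedy is to work with any residue $r_0$ with $c_{r_0}>0$ and take $m=\gcd(r_0,q)$, so that Dirichlet's theorem can force $mp\equiv r_0\pmod q$. A secondary concern is ensuring the subtracted lower-order Dold multiplicities $I_{pd}(f)$ for $d<m$ do not overwhelm the $\lambda(f)^{mp}$ growth; this is ruled out by the exponential gap $\lambda(f)^{-mp/2}\to 0$, arising from the fact that proper divisors of $m$ are at most $m/2$.
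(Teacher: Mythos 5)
Your argument reaches the conclusion by a genuinely different route from the paper's. The paper's proof (given here for the Reidemeister analogue, Theorem~\ref{3.2.48}, and in \cite{FL2} for the Nielsen statement) first locates an exponent $m\le q$ where the dominant periodic part is nonzero, passes to the iterate $\psi=\varphi^m$ (i.e.\ $g=f^m$), invokes the growth results (the analogues of Theorem~\ref{BaBo2.7} and Proposition~\ref{BaBo2.8}) to get $I_p(\psi)\ne0$ for primes $p\equiv1\pmod q$, and then must convert minimal periods of $f^m$ back into minimal periods of $f$, which costs an extra pigeonhole step producing an a~posteriori constant $m_0\mid m$. You instead fix a residue $r_0$ where the dominant coefficient $c_{r_0}$ is positive, take $m=\gcd(r_0,q)$, run Dirichlet in the progression $r_0/m \bmod q/m$ so that $mp\equiv r_0\pmod q$, and verify $I_{mp}(f)>0$ directly from the divisor structure of $mp$ and the exponential gap between $\lambda(f)^{mp}$ and $\lambda(f)^{mp/2}$. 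This avoids both the passage to an iterate and the subsequence argument, pins down $m$ in advance, and is more elementary; it is a legitimate alternative proof in the case $\lambda(f)>1$.

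Two points need repair. First, your opening reduction ``$I_k(f)>0\Rightarrow k\in\HPer(f)$'' is not a consequence of Proposition~\ref{ess} plus M\"obius inversion alone: the identity $I_k(f)=\#\EP_k(f)$ requires that an essential class of $f^d$ stay essential when boosted to $f^k$ (and that essential classes of $f^k$ reduce to essential ones), which is exactly the content of Lemma~\ref{er}, Theorem~\ref{NP}, Theorem~\ref{Je} and \cite[Proposition~5.4]{FL2}; without these the identity and even the non-negativity of $I_k(f)$ can fail when some $N(f^j)=0$ (e.g.\ a degree $-1$ circle map has $I_2(f)=-2$). The clean fix is simply to cite Theorem~\ref{Alg}, which says $\HPer(f)=\{k\mid A_k(f)\ne0\}$, and to replace your bound $I_{pd}(f)\le N(f^{pd})$ by the crude $|I_{pd}(f)|\le\sum_{e\mid pd}N(f^e)=O(pd\,\lambda(f)^{pd})$, which suffices for the estimate and needs no positivity. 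Second, dismissing $\lambda(f)=1$ as making the theorem ``vacuous'' is not correct reasoning: Case (2) of Theorem~\ref{N-BaBo2.6} includes bounded sequences (e.g.\ $N(f^k)\equiv1$), and there the asserted conclusion is not automatic at all --- finiteness of $\HPer(f)$ would contradict the conclusion rather than void the hypothesis. The paper's own proof quietly assumes unboundedness (``Since the sequence $R(\varphi^k)$ is unbounded\,\dots''), so you are no worse off than the source, but you should state explicitly that your argument covers the case $\lambda(f)>1$, which is the intended setting, rather than claim the remaining case is vacuous. With these adjustments your proof is sound.
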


Next we recall that:
\begin{Thm}[{\cite[Theorem~6.1]{Je}}]\label{Je}
Let $f:M\to M$ be a self-map on a compact PL-manifold of dimension $\ge3$. Then $f$ is homotopic to a map $g$ with $P_n(g)=\emptyset$ if and only if $\NP_n(f)=0$.
\end{Thm}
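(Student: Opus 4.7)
The forward direction is immediate: $\NP_n$ is a homotopy invariant that bounds $\#P_n$ from below, so if $g\simeq f$ with $P_n(g)=\emptyset$, then $\NP_n(f)=\NP_n(g)\le\#P_n(g)=0$. The substance lies in the converse, and my plan is to adapt the classical Wecken fixed point elimination argument to the periodic setting.

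First I would use the PL structure and the hypothesis $\dim M\ge 3$ to homotope $f$ so that $\fix(f^k)$ is a finite set for every $k\mid n$; this is a standard transversality reduction. Then $P_n(f)$ is a finite disjoint union of $f$-orbits of minimal period $n$. Group these orbits according to which fixed point class of $f^n$ they lie in. By definition, $\NP_n(f)=0$ says there are no essential irreducible periodic point classes of height $n$: every essential fixed point class $\mathbf{F}$ of $f^n$ is reducible to some proper divisor $d\mid n$, meaning $\mathbf{F}$ already contains a periodic point of minimal period $d<n$. Consequently, every orbit in $P_n(f)$ sits inside a class $\mathbf{F}$ of $f^n$ that is either inessential, or essential and reducible.

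I would then eliminate each such orbit $\langle x\rangle\subset P_n(f)$ by a local, $f$-equivariant Wecken move supported in a small tubular neighborhood of $\langle x\rangle$. In the inessential case, one cancels the points of $\langle x\rangle$ against points of opposite local index in the same class $\mathbf{F}$ of $f^n$, using the standard unlinking procedure available in dimension $\ge 3$. In the reducible essential case, one pushes the points of $\langle x\rangle$ along paths inside $\mathbf{F}$ until they merge with the already present periodic point of smaller minimal period, absorbing the orbit there. Performing these moves in pairwise disjoint equivariant tubular neighborhoods keeps them independent of one another.

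The main obstacle is that any homotopy of $f$ is automatically a homotopy of every iterate $f^k$, so a local modification aimed at $f^n$ threatens to create or destroy fixed points of $f^k$ for proper divisors $k\mid n$. I would handle this by choosing the supports of the moves small enough that on each $f^k$ the modification alters only the intended orbit and its cancellation partner, and by checking inductively on the divisor lattice of $n$ that absorbing an orbit of minimal period $n$ into an orbit of minimal period $d$ leaves the latter's minimal period unchanged and creates no spurious intermediate periods. This careful bookkeeping across all iterates simultaneously is the technical heart of Jezierski's periodic Wecken theorem \cite{Je}.
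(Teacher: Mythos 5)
The paper does not prove this theorem; it is cited verbatim from Jezierski's survey \cite[Theorem~6.1]{Je} and used as a black box, so there is no proof in the paper to compare against. Assessing your proposal on its own terms: the forward direction is fine, and your strategy for the converse ($\NP_n(f)=0$ implies an $f$-orbit-wise Wecken elimination) does correctly identify the shape of Jezierski's argument, but it is a sketch that defers the hard part rather than a proof, as you acknowledge in your final sentence.

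Two gaps in the sketch bear pointing out. First, in the reducible essential case you propose to push the points of $\langle x\rangle$ onto an ``already present periodic point of smaller minimal period,'' but reducibility of a class $\mathbf{F}$ of $f^n$ to depth $d<n$ is a statement about the Reidemeister/periodic-point-class lattice, not about the existence of actual points of minimal period $d$ inside it: the class $\mathbf{G}$ of $f^d$ boosting to $\mathbf{F}$ may be inessential and, after your transversality step, empty, so a target point must first be created (itself a Wecken move entangled with the others). Second, the claim that the moves can be made ``independent of one another'' by choosing supports small enough is exactly the non-obvious core; a homotopy of $f$ supported in a small ball $U$ is necessarily a homotopy of $f^k$ supported in $U\cup f^{-1}(U)\cup\cdots\cup f^{-(k-1)}(U)$, which is not small and can interfere with other orbits, so one needs the full machinery of Jezierski's cancelling procedure (canonical form for the map near orbits, careful induction on the divisor lattice of $n$, and use of $\dim M\ge3$ for unlinking) rather than a locality argument. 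Since the paper simply cites \cite{Je}, there is no fault in doing the same, but the proposal as written does not constitute a proof.
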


The infra-solvmanifolds of dimension $1$ or $2$ are the circle, the torus and the Klein bottle. Theorem~\ref{Je} for dimensions $1$ and $2$ is verified respectively in \cite{BGMY}, \cite{ABLSS} and \cite{Llibre, JKM, KKZ}. Hence we have
\begin{align*}
n\in\HPer(f) & \Longleftrightarrow \exists\, g\simeq f \text{ such that } P_n(f)\ne\emptyset
               \quad(\text{Definition})\\
& \Longleftrightarrow \NP_n(f)\ne0 \quad(\text{Theorem~\ref{Je}})\\
& \Longleftrightarrow \EP_n(f)\ne\emptyset \quad(\text{Theorem~\ref{NP}})\\
& \Longleftrightarrow I_n(f)\ne0 \quad(\text{\cite[Proposition~5.4]{FL2}})\\
& \Longleftrightarrow A_n(f)\ne0
\end{align*}

With the identity \eqref{M}, we have the following result.
\begin{Thm}\label{Alg}
Let $f$ be a map on an infra-solvmanifold of type $\R$.
Then
\begin{align*}
\HPer(f)&=\{k\mid A_k(f)\ne0\}\subset\{k\mid N(f^k)\ne0\}.
\end{align*}
Moreover, if $N(f^k)\ne0$, then there exists a divisor $d$ of $k$ such that $d\in\HPer(f)$.
\end{Thm}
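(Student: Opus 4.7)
The plan is to deduce the theorem by combining three inputs that are already in place: the chain of equivalences displayed immediately before the statement, the Möbius identity \eqref{M}, and the non-negativity of the Dold multiplicities $I_k(f)$.

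First, for the equality $\HPer(f)=\{k\mid A_k(f)\ne 0\}$, I would simply invoke the five-line chain of equivalences directly preceding the theorem, which terminates in ``$\Leftrightarrow A_n(f)\ne 0$''. That chain already assembles Theorem~\ref{Je}, Theorem~\ref{NP} and \cite[Proposition~5.4]{FL2} for infra-solvmanifolds of type $\R$, together with the low-dimensional cases handled separately in \cite{BGMY, ABLSS, Llibre, JKM, KKZ}, so no extra work is needed here.

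Second, for the inclusion $\{k\mid A_k(f)\ne 0\}\subset\{k\mid N(f^k)\ne 0\}$, the point I would emphasize is that every $A_k(f)$ is a \emph{non-negative} integer. Integrality is noted in the excerpt just after the definition of $A_k(f)$; non-negativity comes from the Gauss Congruences discussion, where \cite[Theorem~11.4]{FL} identifies $I_k(f)=kA_k(f)=\sum_{d\mid k}\mu(k/d)N(f^d)$ with the number of isolated periodic points of $f$ of least period $k$. Granting this, $A_k(f)\ne 0$ forces $A_k(f)\ge 1$, and then \eqref{M} gives
$$
N(f^k)=\sum_{d\mid k}d\,A_d(f)\ge k\,A_k(f)\ge k>0,
$$
which is the desired inclusion.

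Third, for the ``moreover'' clause I would argue contrapositively: if every divisor $d$ of $k$ satisfied $A_d(f)=0$, then \eqref{M} would force $N(f^k)=0$. Hence $N(f^k)\ne 0$ produces some $d\mid k$ with $A_d(f)\ne 0$, and by the first equality such a $d$ belongs to $\HPer(f)$. The only step that is not pure bookkeeping is the positivity $A_k(f)\ge 0$; once that is imported from \cite[Theorem~11.4]{FL}, both the inclusion and the ``moreover'' part reduce to monotonicity of non-negative sums under the Möbius inversion \eqref{M}.
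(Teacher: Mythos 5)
Your proposal is correct and follows essentially the paper's own route: the equality $\HPer(f)=\{k\mid A_k(f)\ne0\}$ is exactly the displayed chain of equivalences, and the inclusion together with the ``moreover'' clause come from the M\"obius identity \eqref{M}. Your only addition is to make explicit the non-negativity of the multiplicities $A_k(f)$ (via the identification of $I_k(f)$ with a count of isolated periodic points in \cite[Theorem~11.4]{FL}), which the paper states earlier and leaves implicit here; this is precisely the ingredient needed to pass from $A_k(f)\ne0$ to $N(f^k)\ne0$.
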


\begin{Cor}\label{N-cofinite}
Let $f$ be a map on an infra-solvmanifold of type $\R$.
Suppose that the sequence $\{N(f^k)\}$ is strictly monotone increasing. Then $\HPer(f)$ is cofinite.
\end{Cor}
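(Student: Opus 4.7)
The plan is to combine Theorem~\ref{Alg}, which characterizes $\HPer(f) = \{k \mid A_k(f) \neq 0\}$, with the rational structure of $N_f(z)$ in order to show that $A_k(f) > 0$ for all sufficiently large $k$; since each $A_k(f)$ is a nonnegative integer, this is exactly cofiniteness of $\HPer(f)$.

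First I would extract a quantitative necessary condition for $A_k(f) = 0$ from the identity $N(f^k) = \sum_{d\mid k} I_d(f)$, where $I_d(f) := d\,A_d(f) \ge 0$. If $A_k(f) = 0$, then $N(f^k) = \sum_{d\mid k,\,d<k} I_d(f)$; combined with the trivial bound $I_d(f) \le N(f^d)$, the observation that every proper divisor of $k$ is at most $\lfloor k/2 \rfloor$, and the monotonicity of $\{N(f^k)\}$, this yields
\[
A_k(f) = 0 \;\Longrightarrow\; N(f^k) \le (\tau(k) - 1)\,N(f^{\lfloor k/2 \rfloor}),
\]
where $\tau(k)$ denotes the number of divisors of $k$. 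So the problem reduces to proving $N(f^k) > \tau(k)\,N(f^{\lfloor k/2 \rfloor})$ for all sufficiently large $k$.

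To obtain this, I would exploit the representation $N(f^k) = \sum_{i=1}^r \rho_i \lambda_i^k$ coming from the rationality of $N_f(z)$. Strict monotonicity forces $N(f^k) \to \infty$, so $\lambda(f) > 1$ and the trivial upper bound $N(f^k) \le M\,\lambda(f)^k$ holds. The crucial step is a matching lower bound $N(f^k) \ge c\,\lambda(f)^k$. Writing $N(f^k)/\lambda(f)^k = D(k) + o(1)$, where $D(k) = \sum_{|\lambda_i| = \lambda(f)} \rho_i (\lambda_i/\lambda(f))^k$ is a Bohr almost-periodic trigonometric polynomial on $\bbz$, the distinctness of the $\lambda_i/\lambda(f)$ and nonvanishing of the $\rho_i$ give $D \not\equiv 0$, whence $\beta := \sup_k D(k) > 0$. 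Bohr's theorem supplies a relatively dense set of near-maxima of $D$: there is $L$ such that every interval $[k - L, k]$ contains some $k_0$ with $D(k_0) \ge \beta/2$, and so $N(f^{k_0}) \ge (\beta/4)\lambda(f)^{k_0}$ for $k_0$ large. Monotonicity of $\{N(f^k)\}$ then yields
\[
N(f^k) \ge N(f^{k_0}) \ge \frac{\beta}{4}\,\lambda(f)^{k_0} \ge \frac{\beta}{4}\,\lambda(f)^{-L}\lambda(f)^k,
\]
so $N(f^k)/N(f^{\lfloor k/2 \rfloor})$ grows like $\lambda(f)^{k/2}$ and thus eventually dominates $\tau(k) = k^{o(1)}$.

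The hard part will be the lower bound $N(f^k) \ge c\,\lambda(f)^k$. In the asymptotically periodic case (Case~$(2)$ of Theorem~\ref{N-BaBo2.6}) it is elementary: if some residue-class limit $\alpha_r$ of $N(f^k)/\lambda(f)^k$ vanished, then $N(f^{mq+r}) > N(f^{mq+r-1})$ combined with $N(f^{mq+r})/\lambda(f)^{mq+r} \to 0$ would force $N(f^{mq+r-1})/\lambda(f)^{mq+r-1} \to 0$, hence $\alpha_{r-1} = 0$; iterating cyclically produces $\alpha_s = 0$ for every $s$, contradicting the nonvanishing of $D$. The generic Case~$(3)$, with an interval of limit points and dominant phases that are not all roots of unity, is the technical heart of the argument and is where the Bohr almost-periodicity machinery is genuinely needed.
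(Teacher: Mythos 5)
Your overall strategy parallels the paper's: reduce via Theorem~\ref{Alg} to showing $A_k(f)\ne0$ for all large $k$, and obtain this from a uniform exponential lower bound $N(f^k)\ge c\,\lambda(f)^k$, which you get by combining strict monotonicity with the existence of a relatively dense set of $k$ at which $N(f^k)/\lambda(f)^k$ is bounded away from $0$. The paper imports exactly that last input as \cite[Theorem~4.4]{FL2} (cf.\ Theorem~\ref{BaBo2.7}) and then passes to $I_k(f)\ne0$ by \cite[Proposition~4.5]{FL2} (cf.\ Proposition~\ref{BaBo2.8}). Your replacement for the second step is a nice elementary alternative: since all $I_d(f)\ge0$ (exact realizability, which the paper establishes), $A_k(f)=0$ forces $N(f^k)\le(\tau(k)-1)N(f^{\lfloor k/2\rfloor})$, which the exponential lower bound eventually contradicts; that part is correct.

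The genuine gap is in your re-derivation of the relative-density input, at the step ``$D\not\equiv0$, whence $\beta:=\sup_k D(k)>0$.'' Distinctness of the dominant ratios and nonvanishing of the integer coefficients do not imply a positive supremum for a real trigonometric polynomial on $\bbz$: for instance $D(k)=2\cos(\sqrt2\,k)-2$ has integer coefficients $-2,1,1$ at distinct unimodular frequencies, yet $D(k)\le0$ for all $k$. To close the gap you must use the positivity of the Nielsen numbers themselves: since $N(f^k)\ge0$ and the subdominant part of $N(f^k)/\lambda(f)^k$ tends to $0$, one gets $\liminf_k D(k)\ge0$; and $D(k)$ cannot tend to $0$ because its mean square equals $\sum|\rho_i|^2>0$ by distinctness of the frequencies; hence $\limsup_k D(k)>0$, after which your Bohr almost-periodicity and monotonicity steps do go through. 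This missing argument is precisely the content of the result the paper quotes as a black box (\cite[Theorem~4.4]{FL2}, a variant of \cite[Theorem~2.7]{BaBo}), and it is also the Case~(3) analysis of Theorem~\ref{N-BaBo2.6} that your final paragraph defers; so as written the proof is incomplete exactly at its crux, although the approach is salvageable either by the fix above or simply by citing the quoted theorem as the paper does.
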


\begin{proof}
{By the assumption, we have $\lambda(f)>1$.} Thus by \cite[Theorem~4.4]{FL2} (cf. Theorem~\ref{BaBo2.7}), there exist $\gamma>0$ and $N$ such that if $k>N$ then there exists $\ell=\ell(k)<r(f)$ such that $N(f^{k-\ell})/\lambda(f)^{k-\ell}>\gamma$. Then for all $k>N$, the monotonicity implies that
\begin{align*}
\frac{N(f^k)}{\lambda(f)^k}\ge \frac{N(f^{k-\ell})}{\lambda(f)^k}=\frac{N(f^{k-\ell})}{\lambda(f)^{k-\ell}\lambda(f)^\ell}
\ge\frac{\gamma}{\lambda(f)^\ell}\ge\frac{\gamma}{\lambda(f)^{r(f)}}.
\end{align*}
Applying \cite[Proposition~4.5]{FL2} (cf. Proposition~\ref{BaBo2.8}) with $\epsilon=\gamma/\lambda(f)^{r(f)}$, we see that $I_k(f)\ne0$ and so $A_k(f)\ne0$ for all $k$ sufficiently large. Now our assertion follows from Theorem~\ref{Alg}.
\end{proof}

\begin{Rmk}
Note that in the above Corollary we may use the weaker assumption that the sequence $\{N(f^k)\}$ is eventually strictly monotone increasing, i.e., there exists $k_0>0$ such that $N(f^{k+1})>N(f^k)$ for all $k\ge k_0$.
\end{Rmk}

Thus the main result of \cite{LZ-jmsj} follows from Corollary~\ref{N-cofinite}.
\begin{Cor}[{\cite[Theorem~4.6]{LL-JGP}, \cite[Theorem~3.2]{LZ-jmsj}}]
Let $f$ be an expanding map on an infra-nilmanifold. Then $\HPer(f)$ is cofinite.
\end{Cor}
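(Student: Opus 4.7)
The plan is to reduce to Corollary~\ref{N-cofinite} via the subsequent Remark, which allows the sequence $\{N(f^k)\}$ to be only eventually strictly monotone increasing. Since every simply connected nilpotent Lie group is of type~$\R$, an infra-nilmanifold is in particular an infra-solvmanifold of type~$\R$, so all the results of Section~\ref{N_itetate} apply. It thus suffices to prove that for an expanding map $f$, the sequence $\{N(f^k)\}$ is eventually strictly monotone increasing.

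First I would fix an affine homotopy lift $(d,D)$ of $f$ with linearization $D_*$ on the Lie algebra. Because $f$ is expanding, every eigenvalue of $D_*$ has modulus $>1$; in particular $D_*$ is invertible, $|\det D_*|>1$, and $\|D_*^{-k}\|\to 0$ as $k\to\infty$. For each $A\in\Phi$ I would use the factorization
\[
|\det(I-A_*D_*^k)|=|\det A_*|\cdot|\det D_*|^k\cdot|\det(I-(A_*D_*^k)^{-1})|.
\]
Since $\Phi$ is finite, $\|A_*\|$ and $\|A_*^{-1}\|$ are uniformly bounded in $A$, so $(A_*D_*^k)^{-1}\to 0$ uniformly in $A\in\Phi$ and the last factor tends to $1$. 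Consequently
\[
\lim_{k\to\infty}\frac{|\det(I-A_*D_*^{k+1})|}{|\det(I-A_*D_*^k)|}=|\det D_*|>1
\]
uniformly in $A$. This yields a $k_0$ such that every summand in the Averaging Formula is strictly increasing for $k\ge k_0$, whence so is $N(f^k)$. Invoking Corollary~\ref{N-cofinite} (together with its Remark) then gives the cofiniteness of $\HPer(f)$.

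The main obstacle is obtaining \emph{strict} monotonicity rather than mere asymptotic dominance of the leading term $|\det D_*|^k$, which is nontrivial because $A_*$ and $D_*$ need not commute and the eigenvalues of $A_*D_*^k$ can interact unexpectedly. The factorization above circumvents this by isolating the geometric factor $|\det D_*|^k$ and absorbing all other contributions into the single perturbation $\det(I-(A_*D_*^k)^{-1})$, whose convergence is controlled solely by $\|D_*^{-k}\|$. Uniformity over the finite set $\Phi$ is automatic, so the strict monotonicity survives averaging.
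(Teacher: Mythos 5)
Your proof is correct, and it follows the same overall reduction as the paper---show that $\{N(f^k)\}$ is eventually strictly monotone increasing and then invoke Corollary~\ref{N-cofinite} together with the Remark following it---but the way you establish the monotonicity is genuinely different. The paper does not argue from the averaging formula here: for the analogous statement (the Reidemeister-theoretic version proved in its last section) it uses the rationality of the zeta function, writing the number as $\Gamma_k+\Omega_k$ where $\Gamma_k$ is carried by the eigenvalues of maximal modulus $\lambda(\varphi)=\sp\bigl(\bigwedge D_*\bigr)>1$ and $\Omega_k\to0$, so that eventual strict monotonicity follows from the spectral decomposition $N(f^k)=\sum_i\rho_i\lambda_i^k$. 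You instead work term by term in the averaging formula, factoring $|\det(I-A_*D_*^k)|=|\det A_*|\,|\det D_*|^k\,|\det(I-D_*^{-k}A_*^{-1})|$ and using $\|D_*^{-k}\|\to0$ uniformly over the finite holonomy group $\Phi$; this is elementary, bypasses the zeta-function machinery entirely, and yields the uniform ratio bound approaching $|\det D_*|>1$, which makes each summand (in particular nonzero for large $k$) and hence the average strictly increasing from some $k_0$ on. The one point to flag is your opening assertion that an expanding map on an infra-nilmanifold has an affine homotopy lift $(d,D)$ with all eigenvalues of $D_*$ of modulus $>1$: this is the standard fact underlying \cite{LL-JGP,LZ-jmsj} (and is how the paper defines expanding endomorphisms in its final section), so it should be cited rather than treated as immediate; with that reference supplied, your argument is complete.
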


{In \cite{DeDu-hmp}, the authors also discussed homotopy minimal periods for
hyperbolic maps on infra-nilmanifolds.
A map $f$ on an infra-nilmanifold with affine homotopy lift $(d,D)$ is {\bf hyperbolic}
if $D_*$ has no eigenvalues of modulus $1$.
We now give another proof of each of the main results, Theorems~3.9 and 3.16, in \cite{DeDu-hmp}.
In our proof, we use some useful results such as Lemma~3.7 and Proposition~3.14 in \cite{DeDu-hmp}.}

\begin{Thm}[{\cite[Theorem~3.9]{DeDu-hmp}}]
If $f$ is a hyperbolic map on an infra-nilmanifold with affine homotopy lift $(d,D)$ such that $D_*$ is not nilpotent, then $\HPer(f)$ is cofinite.
\end{Thm}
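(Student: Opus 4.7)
The plan is to mimic the proof of Corollary~\ref{N-cofinite}. By Theorem~\ref{Alg}, $\HPer(f) = \{k \mid A_k(f) \ne 0\}$, so it suffices to show that $A_k(f) \ne 0$ for every sufficiently large $k$. The two ingredients I need are: (i) $\lambda(f) > 1$, and (ii) a uniform positive lower bound $N(f^k)/\lambda(f)^k \ge \gamma > 0$ for all large $k$. Once these are in place, Proposition~4.5 of \cite{FL2} (used exactly as in the proof of Corollary~\ref{N-cofinite}) yields $I_k(f) = k\, A_k(f) \ne 0$ for all $k$ large enough, and Theorem~\ref{Alg} places every such $k$ in $\HPer(f)$.

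For step (i), I would exploit the fact that $D_*$ is compatible with the rational structure on the Lie algebra $\frakS$ determined by the lattice $\Pi \cap S$. Since $D_*$ is not nilpotent, the product of its nonzero eigenvalues (taken over the non-nilpotent block) is forced to be a nonzero rational integer, hence of absolute value at least $1$. Combined with hyperbolicity (no eigenvalue of modulus exactly $1$), this forces the existence of an eigenvalue $\mu$ of $D_*$ with $|\mu| > 1$. By the second clause of Theorem~\ref{Radius},
\[
\lambda(f) \;=\; \sp\!\left(\bigwedge D_*\right) \;\ge\; |\mu| \;>\; 1.
\]
This is essentially Lemma~3.7 of \cite{DeDu-hmp}.

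For step (ii), I would appeal to the averaging formula of Theorem~\ref{av}:
\[
N(f^k) \;=\; \frac{1}{\#\Phi}\sum_{A\in\Phi} \bigl|\det\bigl(I - A_*D_*^k\bigr)\bigr|.
\]
The eigenvalues of $A_* D_*^k$ are (up to the standard identification via the lattice-preserving action) products of roots of unity with the $\mu_i^k$, and hyperbolicity keeps them uniformly bounded away from the unit circle. Therefore, for each $A$, one has $|\det(I - A_* D_*^k)| = \lambda(f)^k \cdot r_A(k)$ with $r_A(k)$ bounded below by a positive constant for all large $k$, and summing over $A$ gives the required inequality $N(f^k)/\lambda(f)^k \ge \gamma > 0$. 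This is Proposition~3.14 of \cite{DeDu-hmp}.

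The main obstacle is step~(i): for a general hyperbolic non-nilpotent linear map, such as $\tfrac{1}{2}\cdot\mathrm{id}$ on $\bbr$, no eigenvalue of modulus strictly greater than $1$ need exist. The integrality coming from the infra-nilmanifold structure --- the compatibility of $D_*$ with the lattice --- is precisely what rules out the ``uniformly contracting'' scenario and forces an expanding eigenvalue. Once that structural point is secured, the rest of the argument is a direct application of the asymptotic machinery already developed in the excerpt.
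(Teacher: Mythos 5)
Your overall skeleton is viable and, at the final stage, coincides with the machinery the paper uses: once $\lambda(f)>1$ and a uniform bound $N(f^k)/\lambda(f)^k\ge\gamma>0$ for large $k$ are in hand, \cite[Proposition~4.5]{FL2} gives $I_k(f)\ne0$, hence $A_k(f)\ne0$, and Theorem~\ref{Alg} puts all large $k$ into $\HPer(f)$. The paper itself takes a shorter route: it quotes \cite[Lemma~3.7]{DeDu-hmp}, whose content is that for a hyperbolic map with $D_*$ not nilpotent the sequence $\{N(f^k)\}$ is \emph{eventually strictly monotone increasing}, and then applies Corollary~\ref{N-cofinite} (together with the Remark allowing eventual monotonicity) as a black box. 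So you are essentially re-deriving, from the averaging formula of Theorem~\ref{av}, the growth input that the paper imports from Dekimpe--Dugardein.

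Two points in your write-up need repair. First, your citations do not match the results you invoke: Lemma~3.7 of \cite{DeDu-hmp} is the monotonicity statement just described, not the assertion that $D_*$ has an eigenvalue of modulus $>1$; and Proposition~3.14 of \cite{DeDu-hmp} concerns the \emph{nilpotent} case, where it yields $N(f^k)=1$ for all $k$ (it is used for the companion Theorem~3.16), so it cannot support your lower bound in step (ii). Second, and more substantively, your justification of step (ii) glosses over the fact that $A_*$ and $D_*$ need not commute, so the eigenvalues of $A_*D_*^k$ are not literally ``eigenvalues of $A_*$ times $\mu_i^k$''. The claim you need is true, but requires an argument: from $\varphi(\Pi)\subset\Pi$ one gets $D_*A_*=\psi(A)_*D_*$ for a map $\psi$ of the finite holonomy group, whence $(A_*D_*^k)^M=D_*^{kM}$ for a suitable $M$ (independent of $k$), so the moduli of the eigenvalues of $A_*D_*^k$ form the multiset $\{|\mu_i|^k\}$; only then does hyperbolicity give $|\det(I-A_*D_*^k)|\sim\lambda(f)^k$ for every $A\in\Phi$, with $\lambda(f)=\sp\bigl(\bigwedge D_*\bigr)$ as in Theorem~\ref{Radius}. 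Your step (i) (integrality of the characteristic polynomial coming from the lattice, forcing an eigenvalue of modulus $>1$ when $D_*$ is hyperbolic and not nilpotent) is the standard argument and is fine in outline, though it too is asserted rather than proved. With these gaps filled (or with the correct citation to the relevant lemmas of \cite{DeDu-hmp}), your proof goes through and is a more self-contained alternative to the paper's one-line deduction.
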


\begin{proof}
\cite[Lemma~3.7]{DeDu-hmp} says that if $f$ is such a map, then
the sequence $\{N(f^k)\}$ is eventually strictly monotone increasing;
by our Corollary~\ref{N-cofinite}, $\HPer(f)$ is cofinite.
\end{proof}

\begin{Thm}[{\cite[Theorem~3.16]{DeDu-hmp}}]
If $f$ is a hyperbolic map on an infra-nilmanifold with affine homotopy lift $(d,D)$ such that $D_*$ is nilpotent, then $\HPer(f)=\{1\}$.
\end{Thm}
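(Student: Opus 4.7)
The plan is to reduce the statement to showing that $N(f^k)=1$ for every $k\ge 1$, after which the conclusion follows immediately from Theorem~\ref{Alg}. The nilpotency hypothesis on $D_*$ will be combined with the compatibility between the affine lift $(d,D)$ and the holonomy group $\Phi$ to force each summand in the Averaging Formula to equal $1$.

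First I would exploit the defining relation $\varphi(\alpha)(d,D)=(d,D)\alpha$ for $\alpha\in\Pi$: comparing the linear parts of both sides produces a map $\sigma:\Phi\to\Phi$ such that $D_*A_*=\sigma(A)_*D_*$ for every $A\in\Phi$. Iterating this commutation yields $D_*^k A_*=\sigma^k(A)_*D_*^k$, and a short induction on $r$ gives $(A_*D_*^k)^r=B_*^{(r)}D_*^{rk}$ for some $B^{(r)}\in\Phi$. Since $D_*$ is nilpotent, $D_*^{rk}=0$ as soon as $rk$ exceeds the nilpotency index, so $A_*D_*^k$ is itself nilpotent. Consequently $\det(I-A_*D_*^k)=1$ for every $A\in\Phi$ and every $k\ge 1$, and the Averaging Formula gives $N(f^k)=1$. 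This step may alternatively be extracted from Proposition~3.14 of \cite{DeDu-hmp}.

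With $N(f^k)=1$ for all $k$, the algebraic multiplicity simplifies to $A_k(f)=\frac{1}{k}\sum_{d\mid k}\mu(k/d)$, which is the classical M\"obius indicator of $k=1$: it equals $1$ for $k=1$ and vanishes for $k\ge 2$. Theorem~\ref{Alg} then identifies $\HPer(f)$ with $\{k\in\bbn : A_k(f)\ne 0\}=\{1\}$, completing the proof. Note that hyperbolicity enters only insofar as it guarantees that Theorem~\ref{Alg} applies (all $R(\varphi^k)$ are finite and equal to $N(f^k)$ via Theorem~\ref{av}), since nilpotent $D_*$ automatically has no eigenvalue of modulus $1$.

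The main obstacle is the first step: correctly deriving the commutation $D_*A_*=\sigma(A)_*D_*$ from the affine-lift identity, and then tracking how the $\sigma$-twist interacts with iterating the product $A_*D_*^k$ so that the reduction $(A_*D_*^k)^r=B_*^{(r)}D_*^{rk}$ goes through cleanly. Once this structural calculation is in place, the remaining M\"obius manipulation is essentially a one-line identity.
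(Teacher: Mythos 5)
Your proposal is correct and takes essentially the same route as the paper: reduce the statement to $N(f^k)=1$ for all $k$, then conclude via the M\"obius identity \eqref{M} and Theorem~\ref{Alg} that $A_k(f)\ne0$ only for $k=1$. The only difference is that where the paper simply quotes \cite[Proposition~3.14]{DeDu-hmp} for $N(f^k)=1$, you rederive it from the averaging formula using nilpotency of $A_*D_*^k$ (a valid argument; note that only the existence, for each $A\in\Phi$, of some $B\in\Phi$ with $D_*A_*=B_*D_*$ is needed, so the possible failure of your $\sigma$ to be well defined is harmless), an alternative you yourself point out.
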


\begin{proof}
If $f$ is such a map, then by \cite[Proposition~3.14]{DeDu-hmp} all $N(f^k)=1$.
Now because of the identity \eqref{M}, all $A_d(f)=0$ except $A_1(f)=N(f^k)=1$.
Hence Theorem~\ref{Alg} implies that $\HPer(f)=\{1\}$.
\end{proof}

\section{Reidemeister numbers $R(\varphi^k)$}

Concerning the Reidemeister numbers $R(\varphi^k)$ of all iterates of $\varphi$,
{we shall assume that all $R(\varphi^k)$ are finite.}
Whenever all $R(\varphi^n)$ are finite, we can consider the Reidemeister zeta function of $\varphi$
$$
R_\varphi(z)=\exp\left(\sum_{k=1}^\infty\frac{R(\varphi^k)}{k}z^k\right).
$$

Let $\varphi:\Pi\to\Pi$ be an endomorphism on a poly-Bieberbach group $\Pi$
with $\Pi\subset \aff(S)=S\rtimes\aut(S)$,
where $S$ is a connected, simply connected solvable Lie group of type $\R$.
By Section~\ref{poly-B}, $\varphi$ is a homomorphism induced by a self-map $f$ on the infra-solvmanifold $\Pi\bs{S}$ of type $\R$.
First we recall the following result.

\begin{Thm}[{\cite[Theorem~11.4]{FL}}]\label{Dold}
Let $\varphi:\Pi\to\Pi$ be an endomorphism on a poly-Bieberbach group $\Pi$
such that all $R(\varphi^k)$ are finite.
Then the sequences $\{R(\varphi^k)\}$ and $\{N(f^k)\}$ are exactly realizable and
\begin{align}\label{DN}
\sum_{d\mid k}\mu\!\left(\frac{k}{d}\right)R(\varphi^d)\equiv
\sum_{d\mid k}\mu\!\left(\frac{k}{d}\right)N(f^d)\equiv0\mod{k}\tag{DN}
\end{align}
for all $k>0$.
\end{Thm}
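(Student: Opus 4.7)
The plan is to reduce both M\"obius sums in \eqref{DN} to a single orbit-counting statement about the map $f$ itself. Since the Nielsen and Reidemeister numbers are homotopy invariants, as explained in Section~\ref{poly-B} I may assume that $f$ has an affine lift $(d,D):S\to S$ inducing $\varphi$. Under the hypothesis that every $R(\varphi^k)$ is finite, Theorem~\ref{av} then yields $R(\varphi^k)=N(f^k)$ for all $k>0$. Consequently the two M\"obius sums in \eqref{DN} coincide, and it suffices to prove exact realizability and the $\mod k$ congruence for the single sequence $\{N(f^k)\}$.

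Next I would upgrade this to the pointwise identity $\#\fix(f^k)=N(f^k)$. The bijection in \eqref{decomp} shows that the fixed point classes of $f^k$ (possibly empty) are indexed by $\calR[\varphi^k]$, so the total number of fixed point classes of $f^k$ equals $R(\varphi^k)=N(f^k)$. Since $N(f^k)$ counts only the essential classes, every fixed point class of $f^k$ must in fact be essential, and then Proposition~\ref{ess} forces each such class to consist of exactly one point. Summing over classes, $\#\fix(f^k)=N(f^k)<\infty$.

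Now set $X=\bigcup_{k\ge 1}\fix(f^k)$ and $T=f|_X$. The inclusion $f(X)\subset X$ is immediate, so $T$ is a well-defined self-map of $X$ satisfying $\fix(T^k)=\fix(f^k)$, whence
$$
\#\fix(T^k)=N(f^k)=R(\varphi^k)
$$
for every $k>0$. This is precisely the definition of exact realizability for both sequences. The standard M\"obius inversion formula then identifies
$$
\sum_{d\mid k}\mu\!\left(\frac{k}{d}\right)\#\fix(T^d)
$$
with the cardinality of the set of points of $X$ whose least $T$-period equals exactly $k$. Since that set decomposes into disjoint $T$-orbits of cardinality exactly $k$, its size is divisible by $k$, which establishes \eqref{DN}.

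The main obstacle is the pointwise identity $\#\fix(f^k)=N(f^k)$; once it is in place, both exact realizability and the divisibility congruence reduce to the elementary observation that minimal-period-$k$ points come in $k$-element orbits. This identity in turn uses Theorem~\ref{av} together with Proposition~\ref{ess} in an essential way, and is the only step that genuinely invokes the infra-solvmanifold structure; every subsequent step is formal.
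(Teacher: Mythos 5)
Your argument is correct and follows essentially the same route as the paper's (i.e.\ \cite[Theorem~11.4]{FL}): in both cases the heart of the matter is that finiteness of all $R(\varphi^k)$ together with Theorem~\ref{av} and Proposition~\ref{ess} forces every Reidemeister class of $\varphi^k$ to label an essential, single-point fixed point class, so that $\#\fix(f^k)=N(f^k)=R(\varphi^k)$, after which \eqref{DN} is the elementary count of least-period-$k$ points grouped into orbits of size $k$. The only minor difference is that you obtain exact realizability constructively, by restricting $f$ to its set of periodic points, whereas the paper deduces it from the non-negativity and divisibility of the M\"obius sums via \cite[Lemma~2.1]{PW}; both are valid.
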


Consider the sequences of algebraic multiplicities $\{A_k(f)\}$
and Dold multiplicities $\{I_k(f)\}$ associated to the sequence $\{N(f^k)\}$:
$$
A_k(f)=\frac{1}{k}\sum_{d\mid k}\mu\!\left(\frac{k}{d}\right)N(f^d),\quad
I_k(f)=\sum_{d\mid k}\mu\!\left(\frac{k}{d}\right)N(f^d).
$$
Then $I_k(f)=kA_k(f)$ and all $A_k(f)$ are integers by \eqref{DN}.
From the M\"{o}bius inversion formula, we immediately have
\begin{align*}
N(f^k)=\sum_{d\mid k}\ d\!\!~A_d(f).
\end{align*}
Because we are assuming that all $R(\varphi^k)$ are finite, by Theorem~\ref{av},
$R(\varphi^k)=N(f^k)$. Consequently, we obtain the sequences of algebraic multiplicities $\{A_k(\varphi)\}$
and Dold multiplicities $\{I_k(\varphi)\}$ associated to the sequence $\{R(\varphi^k)\}$.
Thus $I_k(\varphi)=kA_k(\varphi)$ and all $A_k(\varphi)$ are integers.
Furthermore, we immediately have $R(\varphi^k)=\sum_{d\mid k}\ d\!\!~A_d(\varphi)$.

\begin{Thm}[{\cite[Theorem~7.8]{FL}}]\label{DD}
Let $\varphi:\Pi\to\Pi$ be an endomorphism on a poly-Bieberbach group $\Pi$
such that all $R(\varphi^k)$ are finite. Then the Reidemeister zeta function of $\varphi$
$$
R_\varphi(z)=\exp\left(\sum_{k=1}^\infty\frac{R(\varphi^k)}{k}z^k\right)
$$
is a rational function.
\end{Thm}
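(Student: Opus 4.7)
The plan is to reduce the statement to the already-established rationality of the Nielsen zeta function of a self-map on an infra-solvmanifold of type $\R$. The reduction is essentially immediate once the averaging formula (Theorem~\ref{av}) is invoked, so the main work is organizational rather than computational.

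First I would appeal to the set-up recorded in Section~\ref{poly-B}: since $\Pi$ is a poly-Bieberbach group sitting inside $\aff(S)$ with $S$ a simply connected solvable Lie group of type $\R$, the endomorphism $\varphi$ is semi-conjugate (via \cite[Theorem~2.2]{LL-Nagoya}) by an affine map $(d,D):S\to S$, which descends to a self-map $f:\Pi\bs S\to\Pi\bs S$ on the infra-solvmanifold of type $\R$ whose induced endomorphism is $\varphi$. This identifies the group-theoretic object $\varphi$ with the topological object $f$.

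Next I would use the finiteness hypothesis. By Theorem~\ref{av}, since every $R(\varphi^k)$ is finite, we have
\begin{equation*}
R(\varphi^k)=N(f^k)\qquad\text{for all }k\ge 1.
\end{equation*}
Substituting this termwise into the defining series of the Reidemeister zeta function gives
\begin{equation*}
R_\varphi(z)=\exp\!\left(\sum_{k=1}^{\infty}\frac{R(\varphi^k)}{k}z^k\right)
=\exp\!\left(\sum_{k=1}^{\infty}\frac{N(f^k)}{k}z^k\right)=N_f(z),
\end{equation*}
so the two zeta functions coincide as formal power series.

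Finally I would invoke the rationality of the Nielsen zeta function for maps on infra-solvmanifolds of type $\R$, recalled in Section~\ref{N_itetate} from \cite[Theorem~4.5]{DeDu} and \cite{FL}. Since $N_f(z)$ is a rational function with coefficients in $\bbq$, the identity $R_\varphi(z)=N_f(z)$ shows that $R_\varphi(z)$ is rational as well, completing the proof. There is no real obstacle here; the entire content lies in the averaging formula, which does the work of translating a potentially intractable algebraic Reidemeister count into a topological Nielsen count to which the rationality machinery already applies.
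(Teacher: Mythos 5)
Your reduction is correct and is essentially the same route as the paper's: the statement is imported from \cite{FL}, where the argument is precisely that the affine semi-conjugacy and the averaging formula give $R(\varphi^k)=N(f^k)$ for all $k$ once all Reidemeister numbers are finite, so $R_\varphi(z)=N_f(z)$, and rationality then follows from the rationality of the Nielsen zeta function on infra-solvmanifolds of type $\R$. No gaps; your use of the paper's standing assumption that $\Pi\subset\aff(S)$ with $\Pi\cap S$ a finite-index lattice is exactly the setting in which Theorem~\ref{av} applies.
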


Since $R_\varphi(0)=1$ by definition, $z=0$ is not a zero nor a pole
of the rational function $R_\varphi(z)$.
Thus we can write
$$
R_\varphi(z)=\frac{u(z)}{v(z)}=\frac{\prod_i(1-\beta_iz)}{\prod_j(1-\gamma_jz)}
=\prod_{i=1}^r(1-\lambda_iz)^{-\rho_i}
$$
with all $\lambda_i$ distinct nonzero algebraic integers {(see for example \cite{BL} or \cite[Theorem~2.1]{BaBo})} and $\rho_i$ nonzero integers.
This implies that
\begin{align}\label{N_k}
R(\varphi^k)=\sum_{i=1}^{r(\varphi)}\rho_i\lambda_i^k.\tag{R1}
\end{align}
Note that $r(\varphi)$ is {the number of zeros and poles of $R_\varphi(z)$}.
Since $R_\varphi(z)$ is a homotopy invariant, so is $r(\varphi)$.

Consider another generating function associated to the sequence $\{R(\varphi^k)\}$:
$$
S_\varphi(z)=\sum_{k=1}^\infty R(\varphi^k)z^{k-1}.
$$
Then it is easy to see that
$$
S_\varphi(z)=\frac{d}{dz}\log R_\varphi(z).
$$
Moreover,
$$
S_\varphi(z)=\sum_{k=1}^\infty \sum_{i=1}^{r(\varphi)}\rho_i\lambda_i^kz^{k-1}
=\sum_{i=1}^{r(\varphi)}\frac{\rho_i\lambda_i}{1-\lambda_iz}
$$
is a rational function with simple poles and integral residues,
and $0$ at infinity. The rational function $S_\varphi(z)$ can be written
as $S_\varphi(z)=u(z)/v(z)$ where the polynomials $u(z)$ and $v(z)$ are of the form
$$
u(z)=R(\varphi)+\sum_{i=1}^s a_iz^i,\quad v(z)=1+\sum_{j=1}^tb_jz^j
$$
with $a_i$ and $b_j$ integers, see $(3)\Rightarrow(5)$, Theorem~2.1 in \cite{BaBo}
or \cite[Lemma~3.1.31]{JM}. Let $\tilde{v}(z)$ be the conjugate polynomial of $v(z)$,
i.e., $\tilde{v}(z)=z^{t}v(1/z)$. Then the numbers $\{\lambda_i\}$ are
the roots of $\tilde{v}(z)$, and $r(\varphi)=t$.

The following can be found in the proof of $(3)\Rightarrow(5)$, Theorem~2.1 in \cite{BaBo},
see also \cite[Lemma~2.4]{FL2}.
\begin{Lemma}\label{conjugate}
If $\lambda_i$ and $\lambda_j$ are roots of the {rational} polynomial $\tilde{v}(z)$
which are algebraically conjugate $($i.e., $\lambda_i$ and $\lambda_j$ are roots of the same irreducible polynomial$)$,
then $\rho_i=\rho_j$.
\end{Lemma}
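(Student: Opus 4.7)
The plan is to exploit the Galois symmetry of the generating function $S_\varphi(z)=u(z)/v(z)$. Since the coefficients $R(\varphi^k)$ of its power series are integers, $u(z)$ and $v(z)$ have rational coefficients. The conjugate polynomial $\tilde v(z)$ is therefore also rational, and hence its roots $\{\lambda_i\}$ are permuted by the absolute Galois group $\mathrm{Gal}(\overline{\mathbb{Q}}/\mathbb{Q})$ according to algebraic conjugacy classes.

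First I would record the partial-fraction expansion already established in the text:
\begin{align*}
S_\varphi(z)=\sum_{i=1}^{r(\varphi)}\frac{\rho_i\lambda_i}{1-\lambda_i z},
\end{align*}
which is the unique decomposition since the poles $1/\lambda_i$ are simple and distinct. Fix any $\sigma\in\mathrm{Gal}(\overline{\mathbb{Q}}/\mathbb{Q})$. Because $u(z)$ and $v(z)$ have rational coefficients, $S_\varphi(z)$ is fixed coefficient-wise by $\sigma$. Applying $\sigma$ to the right-hand side yields
\begin{align*}
S_\varphi(z)=\sum_{i=1}^{r(\varphi)}\frac{\sigma(\rho_i\lambda_i)}{1-\sigma(\lambda_i)\,z}=\sum_{i=1}^{r(\varphi)}\frac{\rho_i\,\sigma(\lambda_i)}{1-\sigma(\lambda_i)\,z},
\end{align*}
using $\rho_i\in\mathbb{Z}$. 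Since the $\sigma(\lambda_i)$ are just a permutation of the $\lambda_i$, uniqueness of the partial-fraction decomposition forces the residues at each simple pole $1/\sigma(\lambda_i)=1/\lambda_j$ to match.

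Now suppose $\lambda_i$ and $\lambda_j$ are algebraically conjugate, so there exists $\sigma$ with $\sigma(\lambda_i)=\lambda_j$. Comparing residues at the pole $1/\lambda_j$ in the two expansions gives $\rho_i\lambda_j=\rho_j\lambda_j$, and since $\lambda_j\neq 0$ (as $\lambda_j$ is a nonzero algebraic integer by construction), we conclude $\rho_i=\rho_j$. The only subtlety worth flagging is the integrality of $\rho_i$, which is needed to justify $\sigma(\rho_i)=\rho_i$; this follows because the $\rho_i$ are the nonzero integer exponents appearing in the factorization of the rational function $R_\varphi(z)$ over $\overline{\mathbb{Q}}$, where algebraically conjugate linear factors must appear with matching multiplicities in order for the product to lie in $\mathbb{Q}(z)$ — this observation is in fact exactly the conclusion we want, so to avoid circularity I would instead derive integrality of $\rho_i$ directly from the uniqueness of partial fractions over $\mathbb{C}$ together with the integer residues of $S_\varphi(z)$ already noted in the text. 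The main (and really the only) obstacle is this bookkeeping around integrality of residues versus integrality of the exponents $\rho_i$; once that is clean, the Galois-invariance argument above finishes the proof.
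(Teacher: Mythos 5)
Your argument is correct. The only wobble is the aside about integrality of the $\rho_i$: this is not something the lemma needs to establish, nor does it depend on conjugate factors having equal multiplicities. In the setup preceding the lemma, $R_\varphi(z)$ is written as $\prod_i(1-\lambda_i z)^{-\rho_i}$ with the $\rho_i$ nonzero integers simply because they are the orders of the zeros and poles of a rational function (equivalently, as you note, the residues of $S_\varphi(z)$ at its simple poles are the integers $-\rho_i$). So there is no circularity to avoid; $\sigma(\rho_i)=\rho_i$ is immediate, and the rest of your Galois computation goes through: $\sigma$ fixes $S_\varphi\in\bbq(z)$, permutes the roots of the integral polynomial $\tilde v$, and uniqueness of the partial-fraction expansion at the simple poles $1/\lambda_i$ forces $\rho_i=\rho_j$ whenever $\sigma(\lambda_i)=\lambda_j$.

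As for comparison with the source: the paper gives no proof of its own, referring instead to the proof of $(3)\Rightarrow(5)$ of Theorem~2.1 in \cite{BaBo} (see also \cite[Lemma~2.4]{FL2}). The argument there is the more elementary unique-factorization one: the numerator and denominator of the zeta function (equivalently $\tilde v(z)$) are integral polynomials, so they factor in $\bbq[z]$ into irreducible factors, and every root of a fixed irreducible factor occurs with the multiplicity of that factor; conjugate $\lambda_i,\lambda_j$ therefore carry the same exponent. Your route replaces this by Galois invariance of the partial-fraction decomposition of the logarithmic derivative $S_\varphi(z)$, which requires extending the isomorphism $\bbq(\lambda_i)\to\bbq(\lambda_j)$ to an automorphism of $\overline{\bbq}$ (or of a splitting field of $\tilde v$) and invoking uniqueness of partial fractions, but is equally valid and makes the role of the integer residues transparent. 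Both proofs rest on the same rationality input; yours is slightly heavier on field theory, the cited one on factorization in $\bbq[z]$.
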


Let $\tilde{v}(z)=\prod_{\alpha=1}^s\tilde{v}_\alpha(z)$ be the decomposition
of the monic integral polynomial $\tilde{v}(z)$ into irreducible polynomials $\tilde{v}_\alpha(z)$
of degree $r_\alpha$. Of course, $r=r(\varphi)=\sum_{\alpha=1}^sr_\alpha$ and
\begin{align*}
\tilde{v}(z)&=z^r+b_1z^{r-1}+b_2z^{r-2}+\cdots+b_{r-1}z+b_r\\
&=\prod_{\alpha=1}^s(z^{r_\alpha}+b_1^\alpha z^{r_\alpha-1}
+b_2^\alpha z^{r_\alpha-2}+\cdots+b_{r_\alpha-1}^\alpha z+b_{r_\alpha}^\alpha)
=\prod_{\alpha=1}^s\tilde{v}_\alpha(z).
\end{align*}
If $\{\lambda_i^{(\alpha)}\}$ are the roots of $\tilde{v}_\alpha(z)$,
then the associated $\rho$'s are the same $\rho_\alpha$.
Consequently, we can rewrite \eqref{N_k} as
\begin{align*}
R(\varphi^k)&=\sum_{\alpha=1}^s\rho_\alpha\left(\sum_{i=1}^{r_\alpha}(\lambda_i^{(\alpha)})^k\right)\\
&=\sum_{\rho_\alpha>0}\rho_\alpha^+\left(\sum_{i=1}^{r_\alpha}(\lambda_i^{(\alpha)})^k\right)
-\sum_{\rho_\alpha<0}\rho_\alpha^-\left(\sum_{i=1}^{r_\alpha}(\lambda_i^{(\alpha)})^k\right).
\end{align*}
Consider the $r_\alpha\x r_\alpha$-{integral} square matrices
$$
M_\alpha=\left[\begin{matrix}
0&0&\cdots&0&-b^\alpha_{r_\alpha}\\
1&0&\cdots&0&\hspace{10pt}-b^\alpha_{r_\alpha-1}\\
\vdots&\vdots&&\vdots&\vdots\\
0&0&\cdots&0&-b^\alpha_2\\
0&0&\cdots&1&-b^\alpha_1
\end{matrix}\right].
$$
The characteristic polynomial is $\det(zI-M_\alpha)=\tilde{v}_\alpha(z)$
and therefore $\{\lambda_i^{(\alpha)}\}$ is the set of eigenvalues of $M_\alpha$.
This implies that $R(\varphi^k)=\sum_{\alpha=1}^s\rho_\alpha\ \tr M_\alpha^k$. Set
$$
M_+=\bigoplus_{\rho_\alpha>0} \rho_\alpha^+ M_\alpha,\qquad
M_-=\bigoplus_{\rho_\alpha<0} \rho_\alpha^- M_\alpha.
$$
Then
\begin{align}\label{N2}
R(\varphi^k)=\tr M_+^k-\tr M_-^k=\tr(M_+\bigoplus -M_-)^k.\tag{R2}
\end{align}

We will show in Proposition~\ref{epp} that if $A_k(f)\ne0$ then $N(f^k)\ne0$
and hence $f$ has an essential periodic point of period $k$.
In the following we investigate some other necessary conditions under which $N(f^k)\ne0$.
Recall that
\begin{align*}
N(f^k)&=\text{ the number of essential fixed point classes of $f^k$.}
\end{align*}
If $\bbf$ is a fixed point class of $f^k$, then $f^k(\bbf)=\bbf$ and the {\bf length} of $\bbf$
is the smallest number $p$ for which $f^p(\bbf)=\bbf$, written $p(\bbf)$.
We denote by $\langle\bbf\rangle$ the $f$-orbit of $\bbf$,
i.e., $\langle\bbf\rangle=\{\bbf,f(\bbf),\cdots, f^{p-1}(\bbf)\}$
where $p=p(\bbf)$. If $\bbf$ is essential, so is every $f^i(\bbf)$
and $\langle\bbf\rangle$ is an {\bf essential} periodic orbit of $f$
with length $p(\bbf)$ and $p(\bbf)\mid k$.
These are variations of Corollaries~2.3, 2.4 and 2.5 of \cite{BaBo}.

Assuming that all $R(\varphi^k)$ are finite, we have
\begin{Cor}
If $r(\varphi)\ne0$, then $R(\varphi^i)\ne0$ for some $1\le i\le r(\varphi)$.
In particular, $\varphi$ has an essential periodic orbit with the length $p\mid i, i\leq r(\varphi)$.
\end{Cor}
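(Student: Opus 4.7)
The plan is to argue directly from the closed form~\eqref{N_k}
\[
R(\varphi^k)=\sum_{i=1}^{r(\varphi)}\rho_i\lambda_i^k,
\]
in which the $\lambda_i$ are pairwise distinct nonzero algebraic integers and each $\rho_i$ is a nonzero integer. First I would argue the main assertion by contradiction: suppose that $r:=r(\varphi)\ne 0$ and yet $R(\varphi^i)=0$ for every $1\le i\le r$. Then the $r\times r$ homogeneous linear system
\[
\sum_{i=1}^r\rho_i\lambda_i^k=0,\qquad k=1,\ldots,r,
\]
holds in the unknowns $\rho_1,\ldots,\rho_r$. Its coefficient matrix is $(\lambda_i^k)_{1\le k,i\le r}$, whose determinant equals $\bigl(\prod_{i=1}^r\lambda_i\bigr)\cdot\det\bigl(\lambda_i^{k-1}\bigr)_{1\le k,i\le r}$, a nonzero scalar multiple of a standard Vandermonde determinant. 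Since the $\lambda_i$ are nonzero and pairwise distinct, this determinant is nonzero, so the only solution is $\rho_1=\cdots=\rho_r=0$, contradicting the hypothesis that each $\rho_i\ne 0$.

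This yields the existence of some $1\le i\le r(\varphi)$ with $R(\varphi^i)\ne 0$. For the ``in particular" part, Theorem~\ref{av} gives $R(\varphi^i)=N(f^i)$, where $f:\Pi\bs S\to\Pi\bs S$ is a map inducing $\varphi$ (we are using the standing assumption that all $R(\varphi^k)$ are finite). Thus $f^i$ admits at least one essential fixed point class; by Proposition~\ref{ess} this class is a singleton $\{x\}$, and $x$ is then a periodic point of $f$ whose minimal period $p$ divides $i$. Under the dictionary set up in Section~\ref{poly-B} between essential periodic point classes of $f^n$ and essential Reidemeister classes in $\calR[\varphi^n]$, the orbit $\langle x\rangle$ of length $p$ corresponds to an essential periodic $[\varphi]$-orbit of length $p$ with $p\mid i\le r(\varphi)$.

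The argument is essentially a routine Vandermonde/linear-independence-of-characters trick; the only bookkeeping issue is matching the number of equations $r(\varphi)$ to the number of independent exponential terms appearing in~\eqref{N_k}, and this works by the very definition of $r(\varphi)$ as the count of distinct nonzero $\lambda_i$ in the rational factorization of $R_\varphi(z)$. Accordingly I do not expect any serious obstacle — the heavy lifting has already been done in obtaining the rational form of $R_\varphi(z)$ (Theorem~\ref{DD}) and the averaging identity $R(\varphi^i)=N(f^i)$ (Theorem~\ref{av}).
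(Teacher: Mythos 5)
Your proposal is correct and follows essentially the approach the paper intends: the corollary is stated as a variation of Corollaries~2.3--2.5 of Babenko--Bogaty\v{\i}, whose proof is exactly this Vandermonde/nonvanishing argument applied to the closed form $R(\varphi^k)=\sum_{i=1}^{r(\varphi)}\rho_i\lambda_i^k$ coming from the rationality of $R_\varphi(z)$, with the ``in particular'' part read off from the dictionary between essential Reidemeister classes and singleton essential periodic points (Theorem~\ref{av} and Proposition~\ref{ess}). No gaps.
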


Recalling the identity $R(\varphi^k)=\sum_{i=1}^{r(\varphi)}\rho_i\lambda_i^k$, we define
\begin{align*}
&\rho(\varphi)=\sum_{i=1}^{r(\varphi)}\rho_i,\quad
M(\varphi)=\max\left\{\sum_{\rho_i\ge0}\rho_i,-\sum_{\rho_j<0}\rho_j\right\}.
\end{align*}

\begin{Cor}
If $\rho(\varphi)=0$ and $r(\varphi)\ge1$, then $r(\varphi)\ge2$
and $R(\varphi^i)\ne0$ for some $1\le i< r(\varphi)$.
In particular, $\varphi$ has an essential periodic orbit with the length $p\mid i, i\leq r(\varphi)-1$.
\end{Cor}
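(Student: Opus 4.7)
The plan is to argue in three short steps, using only the formula $R(\varphi^k)=\sum_{i=1}^{r(\varphi)}\rho_i\lambda_i^k$ with distinct nonzero $\lambda_i$ and nonzero integer $\rho_i$, together with the decomposition of Reidemeister classes by height from Section~\ref{poly-B}.

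First I would dispose of the assertion $r(\varphi)\ge 2$. If $r(\varphi)=1$, then $R(\varphi^k)=\rho_1\lambda_1^k$ with $\rho_1\ne 0$, so $\rho(\varphi)=\rho_1\ne 0$, contradicting the hypothesis $\rho(\varphi)=0$. Hence $r:=r(\varphi)\ge 2$.

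Next, I would prove the nonvanishing claim by contradiction using a Vandermonde argument. Suppose $R(\varphi^i)=0$ for every $i$ with $1\le i\le r-1$. Since we also have $\rho(\varphi)=\sum_{i=1}^r \rho_i=0$, which is precisely the value of $\sum_{i=1}^r \rho_i\lambda_i^k$ at $k=0$, the vector $(\rho_1,\ldots,\rho_r)$ satisfies the homogeneous linear system
\[
\sum_{i=1}^r \rho_i\,\lambda_i^k = 0,\qquad k=0,1,\ldots,r-1.
\]
The coefficient matrix $[\lambda_i^k]_{0\le k\le r-1,\,1\le i\le r}$ is a Vandermonde matrix in the distinct nonzero numbers $\lambda_1,\ldots,\lambda_r$, hence nonsingular. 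Therefore $\rho_1=\cdots=\rho_r=0$, contradicting the fact that each $\rho_i$ is a nonzero integer. Consequently there exists $i$ with $1\le i\le r-1=r(\varphi)-1$ such that $R(\varphi^i)\ne 0$.

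Finally, for the "in particular" statement, I would invoke the height decomposition $R(\varphi^i)=\sum_{d\mid i}\#\calI\calR(\varphi^d)$ established earlier (using $R(\varphi^i)=N(f^i)$ via Theorem~\ref{av} and Theorem~\ref{NP}). Since $R(\varphi^i)\ne 0$, there is some divisor $p$ of $i$ with $\calI\calR(\varphi^p)\ne\emptyset$; by the identification of irreducible classes with essential periodic $[\varphi]$-orbits of length equal to the height, this yields an essential periodic orbit of length $p\mid i$ with $i\le r(\varphi)-1$. The only subtle point is really the Vandermonde step, but the nonvanishing of the Vandermonde determinant is standard since the $\lambda_i$ are pairwise distinct, so no genuine obstacle arises.
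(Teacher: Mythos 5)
Your proof is correct and follows the intended argument: the paper itself gives no separate proof, deferring to Corollaries 2.3--2.5 of Babenko--Bogaty\v{\i}, and the standard argument there is exactly your Vandermonde step (using $\rho(\varphi)=\sum_i\rho_i$ as the $k=0$ equation) combined, in this setting, with the height decomposition $R(\varphi^i)=\sum_{d\mid i}\#\calI\calR(\varphi^d)$ for the final clause. No gaps.
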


\begin{Cor}\label{2.9}
If $r(\varphi)>0$, then $R(\varphi^i)\ne0$ for some $1\le i\le M(\varphi)$.
In particular, $\varphi$ has an essential periodic orbit with the length $p\mid i, i\leq M(\varphi)$.
\end{Cor}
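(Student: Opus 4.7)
The plan is to argue by contradiction using the matrix realization \eqref{N2}, together with Newton's identities. Suppose $R(\varphi^i) = 0$ for all $1 \le i \le M(\varphi)$; I will show this forces $r(\varphi) = 0$, contradicting the hypothesis. Once $R(\varphi^i) \neq 0$ is established for some $i \le M(\varphi)$, the orbit conclusion follows directly from the height decomposition of $\calR[\varphi^i]$.

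First, I want to identify $M(\varphi)$ with a matrix dimension. For each irreducible factor $\tilde v_\alpha(z)$ of degree $r_\alpha$, all $r_\alpha$ distinct roots share the common multiplicity $\rho_\alpha$ by Lemma~\ref{conjugate}. Summing over the distinct eigenvalues $\lambda_i$ therefore yields
\[
\sum_{\rho_i > 0}\rho_i \;=\; \sum_{\rho_\alpha > 0}\rho_\alpha\,r_\alpha \;=\; \dim M_+,
\]
and similarly $-\sum_{\rho_j < 0}\rho_j = \dim M_-$. Thus $M(\varphi) = \max(\dim M_+, \dim M_-)$. Now \eqref{N2} together with the vanishing assumption gives $\tr M_+^i = \tr M_-^i$ for every $i \le M(\varphi)$. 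Assume without loss of generality that $N := \dim M_+ \ge \dim M_-$ (the opposite case is symmetric), and pad $M_-$ with zero rows and columns to an $N\times N$ matrix $\tilde M_-$, which preserves traces of all positive powers. Then $\tr M_+^i = \tr \tilde M_-^i$ for $i = 1,\ldots,N$, so by Newton's identities the two $N\times N$ matrices have the same characteristic polynomial and hence the same spectrum (with multiplicities).

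The step I expect to be the main obstacle is matching these two spectra cleanly when the matrices have different sizes, but the resolution is forced by the nonvanishing of the $\lambda_i$'s. Indeed, the spectrum of $M_+$ is $\{\lambda_i : \rho_i > 0\}$ with multiplicities $\rho_i$, whereas that of $\tilde M_-$ is $\{\lambda_j : \rho_j < 0\}$ with multiplicities $-\rho_j$, together with $N - \dim M_-$ copies of zero. Since all $\lambda_i$ are nonzero algebraic integers and the two index sets $\{\rho_i > 0\}$ and $\{\rho_j < 0\}$ are disjoint, equality of the two multisets forces both to be empty, i.e., $r(\varphi) = 0$, the desired contradiction. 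Hence $R(\varphi^i) \ne 0$ for some $1 \le i \le M(\varphi)$.

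For the ``in particular'' part, from $R(\varphi^i) \ne 0$ and the decomposition
\[
\calR[\varphi^i] \;=\; \bigsqcup_{d\mid i}\calI\calR(\varphi^d)
\]
from Section~\ref{poly-B}, some $\calI\calR(\varphi^d)$ with $d\mid i$ is nonempty, giving an essential irreducible Reidemeister class of height $d$. By the correspondence recorded after Theorem~\ref{NP}, this class corresponds to an essential periodic orbit of $\varphi$ of length exactly $d$, with $d\mid i \le M(\varphi)$, as required.
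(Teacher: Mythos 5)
Your proof is correct and follows essentially the route the paper intends: it exploits the representation \eqref{N2}, $R(\varphi^k)=\tr M_+^k-\tr M_-^k$, identifies $M(\varphi)=\max(\dim M_+,\dim M_-)$, and derives the contradiction by comparing spectra via Newton's identities, which is exactly the Babenko--Bogaty\u{\i}-style argument the paper invokes for this corollary, with the orbit statement then following from the height decomposition of $\calR[\varphi^i]$ as in Section~\ref{poly-B}.
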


\section{Radius of convergence of $R_\varphi(z)$}

From the Cauchy--Hadamard formula, we can see that the radii $\textsf{R}$
of convergence of the infinite series $R_\varphi(z)$ and $S_\varphi(z)$ are the same and given by
$$
\frac{1}{\textsf{R}}=\limsup_{k\to\infty}\left(\frac{R(\varphi^k)}{k}\right)^{1/k}
=\limsup_{k\to\infty}R(\varphi^k)^{1/k}.
$$

We will understand the radius $\textsf{R}$ of convergence
from the identity $R(\varphi^k)=\sum_{i=1}^{r(\varphi)}\rho_i\lambda_i^k$.
Recall that the $\lambda_i^{-1}$ are the poles or the zeros of the rational function $R_\varphi(z)$.
We define
$$
\lambda(\varphi)=\max\{|\lambda_i|\mid i=1,\cdots,r(\varphi)\}.
$$

If $r(\varphi)=0$, i.e., if $R(\varphi^k)=0$ for all $k>0$,
then $R_\varphi(z)\equiv1$ and $1/\textsf{R}=0$.
In this case, we define customarily $\lambda(\varphi)=0$.
We shall assume now that $r(\varphi)\ne0$.
In what follows, when $\lambda(\varphi)>0$, we consider
$$
n(\varphi) = \#\{i\mid |\lambda_i|=\lambda(\varphi)\}.
$$

Remark that if $\lambda(\varphi)<1$ then $R(\varphi^k)=\sum_{i=1}^{r(\varphi)} \rho_i\lambda_i^k\to 0$ and
so the sequence of integers are eventually zero, i.e., $R(\varphi^k)=0$ for all $k$ sufficiently large.
This shows that $1/\textsf{R}=0$ and furthermore, $R_\varphi(z)$ is the exponential of a polynomial.
Hence the rational function $R_\varphi(z)$ has no poles and zeros.
This forces $R_\varphi(z)\equiv1$; hence $\lambda(\varphi)=0=1/\textsf{R}$.

Assume $|\lambda_j|\ne\lambda(\varphi)$ for some $j$; then we have
$$
\frac{R(\varphi^k)}{\lambda_j^k}=\sum_{i\ne j}\rho_i\left(\frac{\lambda_i}{\lambda_j}\right)^k+\rho_j,\quad
\lim\sum_{i\ne j}\rho_i\left(\frac{\lambda_i}{\lambda_j}\right)^k=\infty.
$$
It follows from the above observations that $1/\textsf{R}=\limsup(\sum_{i\ne j}\rho_i\lambda_i^k)^{1/k}$.
Consequently, we may assume that $R(\varphi^k)=\sum_j\rho_j\lambda_j^k$
with all $|\lambda_j|=\lambda(\varphi)$ and then we have
$$
\frac{1}{\textsf{R}}=\limsup \left(\sum_{|\lambda_j|=\lambda(\varphi)} \rho_j\lambda_j^k\right)^{1/k}.
$$
If $\lambda(\varphi)>1$, then $R(\varphi^k)\to\infty$ and by L'Hopital's rule we obtain
\begin{align*}
\limsup_{k\to\infty}\frac{\log R(\varphi^k)}{k}&=\limsup_{k\to\infty}\frac{\log\left(\sum_j\rho_j\lambda_j^k\right)}{k}
=\log\lambda(\varphi)\ \Rightarrow\ \frac{1}{\textsf{R}}=\lambda(\varphi).
\end{align*}
If $\lambda(\varphi)=1$, then $R(\varphi^k)\le \sum_j|\rho_j|<\infty$ is a bounded sequence
and so it has a convergent subsequence.
If $\limsup R(\varphi^k)=0$, then $R(\varphi^k)=0$ for all $k$ sufficiently large
and so by the same reason as above, $\lambda(\varphi)=0$, a contradiction.
Hence $\limsup R(\varphi^k)$ is a finite nonzero integer and so $1/\textsf{R}=1=\lambda(\varphi)$.

Summing up, we have obtained that

\begin{Thm}\label{Radius1}
Let $\varphi:\Pi\to\Pi$ be an endomorphism on a poly-Bieberbach group $\Pi$
such that all $R(\varphi^k)$ are finite.
Let {\rm $\textsf{R}$} denote the radius of convergence of the Reidemeister zeta function $R_\varphi(z)$ of $\varphi$.
Then $\lambda(\varphi)=0$ or $\lambda(\varphi)\ge1$, and
\begin{align}\label{R1}
\frac{1}{\text{\rm{$\textsf{R}$}}}=\lambda(\varphi).\notag
\end{align}
In particular, {\rm $\textsf{R}>0$}.
\end{Thm}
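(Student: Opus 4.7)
The plan is to combine the Cauchy--Hadamard formula with the rational-function representation of $R_\varphi(z)$ provided by Theorem~\ref{DD}. First I would note that
$$
\frac{1}{\textsf{R}}=\limsup_{k\to\infty}R(\varphi^k)^{1/k},
$$
and that, by the identity \eqref{N_k},
$$
R(\varphi^k)=\sum_{i=1}^{r(\varphi)}\rho_i\lambda_i^k
$$
with distinct nonzero algebraic integers $\lambda_i$ and nonzero integers $\rho_i$. The trivial case $r(\varphi)=0$ gives $R_\varphi(z)\equiv1$, hence $1/\textsf{R}=0=\lambda(\varphi)$ by convention, so I may assume $r(\varphi)\ge1$.

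Next I would establish the dichotomy $\lambda(\varphi)=0$ or $\lambda(\varphi)\ge1$ by ruling out the range $0<\lambda(\varphi)<1$. In that range one has $|R(\varphi^k)|\le\bigl(\sum_i|\rho_i|\bigr)\lambda(\varphi)^k\to 0$, so the integer sequence $R(\varphi^k)$ vanishes for all $k$ sufficiently large. Then $R_\varphi(z)=\exp(P(z))$ for some polynomial $P$; but being rational forces the entire function $\exp(P(z))$ to have no poles and no zeros, hence $R_\varphi(z)\equiv1$, contradicting $r(\varphi)\ge1$.

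Having reduced to $\lambda(\varphi)\ge1$, I would isolate the dominant terms. Any $\lambda_j$ with $|\lambda_j|<\lambda(\varphi)$ contributes $O(|\lambda_j|^k)$, which is negligible when taking $k$-th roots, so it suffices to evaluate the limsup of $\bigl|\sum_{|\lambda_j|=\lambda(\varphi)}\rho_j\lambda_j^k\bigr|^{1/k}$. For $\lambda(\varphi)>1$, the sum is dominated in modulus by $\lambda(\varphi)^k$ times a bounded quasi-periodic factor, and a L'H\^opital computation on $\log R(\varphi^k)/k$ (or a direct squeeze using the trivial upper bound and the fact that $R(\varphi^k)$ cannot vanish on arbitrarily long blocks, by the integer argument above applied to the dominant-term subsum) gives $1/\textsf{R}=\lambda(\varphi)$. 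For $\lambda(\varphi)=1$ the sequence $R(\varphi^k)$ is a bounded sequence of integers; it is not eventually zero (again by the rationality argument of the previous paragraph), so $\limsup R(\varphi^k)^{1/k}=1=\lambda(\varphi)$.

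The main obstacle will be the case $\lambda(\varphi)=1$, where the naive $\log/k$ asymptotic is insensitive to the size of the leading coefficient and where transient cancellations among the $\rho_j\lambda_j^k$ with $|\lambda_j|=1$ could in principle drive infinitely many $R(\varphi^k)$ to zero. The way around this is precisely the rigidity supplied by integrality combined with the rational form of $R_\varphi(z)$: a subsum of $\rho_j\lambda_j^k$ with roots of unity $\lambda_j/\lambda(\varphi)$ is quasi-periodic, so if it vanished on a residue class of $k$ it would vanish identically on that class, contradicting $r(\varphi)\ge1$ after pruning. Everything else is a routine application of Cauchy--Hadamard.
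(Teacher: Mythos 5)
Your proposal is correct and follows essentially the same route as the paper: Cauchy--Hadamard plus the rational form $R(\varphi^k)=\sum_i\rho_i\lambda_i^k$, ruling out $0<\lambda(\varphi)<1$ by integrality (eventually zero forces $R_\varphi(z)\equiv1$), reducing to the dominant terms, and treating $\lambda(\varphi)>1$ via the $\log R(\varphi^k)/k$ asymptotic and $\lambda(\varphi)=1$ via boundedness of the integer sequence together with the same non-vanishing argument. No substantive differences from the paper's proof.
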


Recall that
\begin{align*}
&S_\varphi(z)=\sum_{i=1}^{r(\varphi)}\frac{\rho_i\lambda_i}{1-\lambda_iz},\\
&R_\varphi(z)=\prod_{i=1}^{r(\varphi)}(1-\lambda_iz)^{-\rho_i}
=\frac{\prod_{\rho_j<0}(1-\lambda_jz)^{-\rho_j}}{\prod_{\rho_i>0}(1-\lambda_iz)^{\rho_i}}.
\end{align*}
These show that all of the $1/\lambda_i$ are the poles of $S_\varphi(z)$,
whereas the $1/\lambda_i$ with corresponding $\rho_i>0$ are the poles of $R_\varphi(z)$.
The radius of convergence of a power series centered at a point $a$ is equal to the distance from $a$
to the nearest point where the power series cannot be defined in a way that makes it holomorphic.
Hence the radius of convergence of $S_\varphi(z)$ is {$1/\lambda(\varphi)$}
and the radius of convergence of $R_\varphi(z)$ is {$1/\max\{|\lambda_i|\mid \rho_i>0\}$}.
In particular, we have shown that
$$
\lambda(\varphi)=\max\{|\lambda_i|\mid i=1,\cdots,r(\varphi)\}=\max\{|\lambda_i|\mid \rho_i>0\}.
$$

\begin{Thm}\label{Radius2}
Let $\varphi:\Pi\to\Pi$ be an endomorphism on a poly-Bieberbach group $\Pi$ of $S$
such that all $R(\varphi^k)$ are finite.
Let {\rm $\textsf{R}$} denote the radius of convergence of the Reidemeister zeta function of $\varphi$.
If $\varphi$ is the semi-conjugate by an affine map $(d,D)$ on $S$ and if $D_*$ has no eigenvalue $1$, then
$$
\frac{1}{\text{\rm{$\textsf{R}$}}}=\sp\left(\bigwedge D_*\right)=\lambda(\varphi).
$$
\end{Thm}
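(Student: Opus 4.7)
The plan is to deduce this directly from its Nielsen-theoretic counterpart, Theorem~\ref{Radius}. As observed in Section~\ref{poly-B}, the affine map $(d,D)$ descends to a self-map $f:\Pi\bs{S}\to\Pi\bs{S}$ satisfying $\varphi_f=\varphi$. The hypothesis that every $R(\varphi^k)$ is finite together with Theorem~\ref{av} gives $R(\varphi^k)=N(f^k)$ for every $k\ge1$.

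Consequently the two zeta functions coincide term by term: $R_\varphi(z)=N_f(z)$. As rational functions they therefore have the same radius of convergence and the identical factorization $\prod_i(1-\lambda_iz)^{-\rho_i}$, so the set of $\lambda_i$'s is the same in both theories and $\lambda(\varphi)=\lambda(f)$.

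The remaining hypothesis, that $D_*$ has no eigenvalue equal to $1$, is exactly the hypothesis of the last displayed equation in Theorem~\ref{Radius}. Invoking that theorem yields $1/\textsf{R}=\sp(\bigwedge D_*)=\lambda(f)$, and combining this with $\lambda(f)=\lambda(\varphi)$ produces the stated chain of equalities. The argument is essentially a translation of the already-established Nielsen statement into the language of Reidemeister theory, so there is no serious obstacle; the only point requiring attention is that the finiteness of all $R(\varphi^k)$ legitimately activates the second assertion of Theorem~\ref{av}, which is precisely its hypothesis, and thus forces the coefficient-wise identification $R_\varphi(z)=N_f(z)$ on which the whole reduction rests.
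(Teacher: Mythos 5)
Your argument is correct. You reduce the statement to the Nielsen-theoretic Theorem~\ref{Radius}: since all $R(\varphi^k)$ are finite, Theorem~\ref{av} gives $R(\varphi^k)=N(f^k)$ for the map $f$ on $\Pi\bs{S}$ induced by the affine map $(d,D)$ (so that $(d,D)$ is an affine homotopy lift of $f$ and $\varphi_f=\varphi$), hence $R_\varphi(z)=N_f(z)$ as power series, the radii of convergence agree, $\lambda(\varphi)=\lambda(f)$, and the no-eigenvalue-$1$ hypothesis activates the last clause of Theorem~\ref{Radius}. This is a valid proof and is the same kind of two-line reduction as the paper's, but the route is slightly different: the paper identifies $R_\varphi(z)$ with the Reidemeister zeta function $R_f(z)$ of the map and quotes the external result \cite[Theorem~3.4]{FL2} for $1/\textsf{R}=\sp(\bigwedge D_*)$, then combines it with Theorem~\ref{Radius1}; you instead route everything through the Nielsen zeta function via the averaging identification $R(\varphi^k)=N(f^k)$ and Theorem~\ref{Radius}, both of which are already stated in the paper. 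Your version has the mild advantage of being self-contained in the quoted results and of making explicit exactly where the finiteness of all $R(\varphi^k)$ is used; the paper's version avoids the detour through $N_f(z)$ at the cost of citing a further result of \cite{FL2}. Either way the content is the same computation of the radius in terms of $\sp(\bigwedge D_*)$ carried out in \cite{FL2}.
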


\begin{proof}
Recall that $R_\varphi(z)=R_f(z)$ and the radius $\textsf{R}$ of convergence of $R_f(z)$
satisfies $1/\textsf{R}=\sp\left(\bigwedge D_*\right)$ by \cite[Theorem~3.4]{FL2}. With Theorem~\ref{Radius1},
we obtain the required assertion.
\end{proof}

We recall that the asymptotic Reidemeister number of $\varphi$ is defined to be
$$
R^\infty(\varphi):=\max\left\{1,\limsup_{k\to\infty}R(\varphi^k)^{1/k}\right\}.
$$
We also recall that the most widely used measure for the complexity of a dynamical system
is the topological entropy $h(f)$. A basic relation between these two numbers
is $h(f)\ge \log N^\infty(f)$, which was found by Ivanov in \cite{I}.
There is a conjectural inequality $h(f)\ge \log(\sp(f))$ raised by Shub \cite{Shub}.
This conjecture was proven for all maps on infra-solvmanifolds of type $\R$, see \cite{mp,mp-a} and \cite{FL}.
Consider a continuous map $f$ on a compact connected manifold $M$, and
consider a homomorphism $\varphi$ induced by $f$ of
the group $\Pi$ of covering transformations on the universal cover of $M$.
Since $M$ is compact, $\Pi$ is finitely generated. Let $T=\{\tau_1,\cdots,\tau_n\}$ be a set of generators for $\Pi$.
For any $\gamma\in\Pi$, let $L(\gamma,T)$ be the length of the shortest word in the letters $T\cup T^{-1}$
which represents $\gamma$.
For each $k>0$, we put
$$
L_k(\varphi,T)=\max \left\{L(\varphi^k(\tau_i),T)\mid i=1,\cdots,n\right\}.
$$
Then the {\bf algebraic entropy} $h_\alg(f)=h_\alg(\varphi)$ of $f$ or $\varphi$ is defined as follows:
$$
h_\alg(f)=\lim_{k\to\infty}\frac{1}{k}\log L_k(\varphi,T).
$$
The algebraic entropy of $f$ is well-defined, i.e., independent of
the choices of a set $T$ of generators for $\Pi$ and a homomorphism $\varphi$ induced by $f$ (\cite[p.~\!114]{KH}).
We refer to \cite{KH} for the background.
We recall that R. Bowen in \cite{B78} and A. Katok in \cite{K}, among others, have proved that
the topological entropy $h(f)$ of $f$ is at least as large as the algebraic entropy
$h_\alg(\varphi)$ of $\varphi$.
Furthermore, for any inner automorphism $\tau_{\gamma_0}$ by $\gamma_0$, we have $h_\alg(\tau_{\gamma_0}\varphi)=h_\alg(\varphi)$ (\cite[Proposition~3.1.10]{KH}).
Now we can make a statement about the relations between $R^\infty(\varphi)$, $\lambda(\varphi)$, $h(f)$ and $h_\alg(\varphi)$.

\begin{Cor}\label{alg entropy}
Let $\varphi:\Pi\to\Pi$ be a homomorphism on a poly-Bieberbach group $\Pi$ of $S$ and all $R(\varphi^k)$ are finite. Let $(d,D)$ be an affine map on $S$ such that $\varphi(\alpha)\circ(d,D)=(d,D)\circ\alpha$ for all $\alpha\in\Pi$. Let $\bar{f}$ be the map on $\Pi\bs{S}$ induced by $(d,D)$ and let $f$ be any map on $\Pi\bs{S}$ which is homotopic to $\bar{f}$. Then
\begin{align*}
&R^\infty(\varphi)=\sp\left(\bigwedge D_*\right)=\lambda(\varphi),\\
&h_\alg(\varphi)=h_\alg(\bar{f})=h_\alg(f)\le h(\bar{f})=\log R^\infty(\varphi)\le h(f),
\end{align*}
provided that $1$ is not an eigenvalue of $D_*$.
\end{Cor}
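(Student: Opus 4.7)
First I would establish the chain $R^\infty(\varphi)=\sp(\bigwedge D_*)=\lambda(\varphi)$. Since all $R(\varphi^k)$ are finite, the averaging formula in Theorem~\ref{av} (using $\sigma(x)>0$ for $x\ne 0$) yields $R(\varphi^k)>0$ for every $k$, so $R_\varphi(z)\not\equiv 1$ and hence $\lambda(\varphi)\ne 0$; by Theorem~\ref{Radius1} this forces $\lambda(\varphi)\ge 1$. By the very definition of the asymptotic Reidemeister number,
\[
R^\infty(\varphi)=\max\!\left\{1,\limsup_{k\to\infty} R(\varphi^k)^{1/k}\right\}=\max\!\left\{1,1/\textsf{R}\right\}=\max\{1,\lambda(\varphi)\}=\lambda(\varphi),
\]
and Theorem~\ref{Radius2}, applicable because $1$ is not an eigenvalue of $D_*$, then supplies $\lambda(\varphi)=\sp(\bigwedge D_*)$.

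Next I would handle the algebraic-entropy identifications $h_\alg(\varphi)=h_\alg(\bar f)=h_\alg(f)$. The first equality is tautological since $\bar f$ induces $\varphi$ on $\Pi$. The second holds because $f\simeq\bar f$ forces their induced endomorphisms on $\Pi$ to differ by an inner automorphism, and the algebraic entropy is conjugacy-invariant by \cite[Proposition~3.1.10]{KH}. The Bowen--Katok inequality \cite{B78,K} then gives $h_\alg(\bar f)\le h(\bar f)$, so combining these yields $h_\alg(f)\le h(\bar f)$.

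The central identity, and the main obstacle in the corollary, is $h(\bar f)=\log R^\infty(\varphi)$. For this I would invoke the resolution of Shub's entropy conjecture for infra-solvmanifolds of type $\R$ from \cite{mp,mp-a,FL}, which computes the topological entropy of the affine self-map $\bar f$ as $\log\sp(\bigwedge D_*)$; combined with the first paragraph this is precisely $\log R^\infty(\varphi)$. Finally, for $\log R^\infty(\varphi)\le h(f)$, Theorem~\ref{av} gives $R(\varphi^k)=N(f^k)$ for all $k$, so $R^\infty(\varphi)=N^\infty(f)$, and Ivanov's classical inequality \cite{I}, $h(f)\ge\log N^\infty(f)$, completes the chain.
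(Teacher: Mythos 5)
Your proposal is correct, and it reaches the same conclusion by a mildly different route than the paper. The paper's proof is terse: it gets the first chain $R^\infty(\varphi)=\sp\left(\bigwedge D_*\right)=\lambda(\varphi)$ by citing \cite[Theorem~4.3]{FL} together with Theorem~\ref{Radius2}, obtains $h(f)\ge h(\bar f)=\log\lambda(\varphi)$ directly from \cite[Theorem~5.2]{FL}, and finishes with the Bowen--Katok inequality and the conjugacy invariance of $h_\alg$ exactly as you do. You differ in two places. First, you derive $R^\infty(\varphi)=\lambda(\varphi)$ internally: finiteness plus the averaging formula of Theorem~\ref{av} gives $R(\varphi^k)>0$ (indeed $R(\varphi^k)\ge1$ always, being the cardinality of a nonempty set), hence $\lambda(\varphi)\ge1$ by Theorem~\ref{Radius1}, and then the Cauchy--Hadamard identity $1/\textsf{R}=\limsup R(\varphi^k)^{1/k}$ yields $R^\infty(\varphi)=\max\{1,\lambda(\varphi)\}=\lambda(\varphi)$; this replaces the external citation and is a self-contained improvement. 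Second, you obtain the final inequality $\log R^\infty(\varphi)\le h(f)$ from $R(\varphi^k)=N(f^k)$ and Ivanov's inequality $h(f)\ge\log N^\infty(f)$, whereas the paper gets it from the homotopy-invariance statement $h(f)\ge h(\bar f)$ in \cite[Theorem~5.2]{FL}; both are legitimate, and your route makes the analogy with Ivanov's theorem (which the paper only remarks on afterwards) explicit. One small imprecision: the resolution of Shub's entropy conjecture in \cite{mp,mp-a} by itself only gives the lower bound $h(\bar f)\ge\log\sp\left(\bigwedge D_*\right)$; the equality $h(\bar f)=\log\sp\left(\bigwedge D_*\right)$ for the map induced by the affine $(d,D)$ also needs the matching upper bound for affine-induced maps, which is exactly what \cite[Theorem~5.2]{FL} (among your cited sources) supplies — so this is a matter of attribution rather than a gap in the argument.
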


\begin{proof}
From \cite[Theorem~4.3]{FL} and Theorem~\ref{Radius2}, we obtain the first assertion, $R^\infty(\varphi)=\sp\left(\bigwedge D_*\right)=\lambda(\varphi)$. By \cite[Theorem~5.2]{FL}, $h(f)\ge  h(\bar{f})=\log\lambda(\varphi)$ and by the remark mentioned just above, we have that $h(\bar{f})\ge h_\alg(\bar{f})=h_\alg(f)=h_\alg(\varphi)$.
\end{proof}

\begin{Rmk}
The inequality
$$
\log R^\infty(\varphi)\ge h_\alg(\varphi)
$$
in Corollary~\ref{alg entropy} can be regarded as an algebraic
analogue of the Ivanov inequality $h(f)\ge \log N^\infty(f)$.
\end{Rmk}

\section{Asymptotic behavior of the sequence $\{R(\varphi^k)\}$}

In this section, we study the asymptotic behavior of the Reidemeister numbers of iterates of maps
on poly-Bieberbach groups.

\begin{Thm}\label{BaBo2.6}
Let $\varphi:\Pi\to\Pi$ be an endomorphism on a poly-Bieberbach group
such that all $R(\varphi^k)$ are finite. Then one of the following two possibilities holds:
\begin{enumerate}
\item[$(1)$] $\lambda(\varphi)=0$, which occurs if and only if $R_\varphi(z)\equiv1$.
\item[$(2)$] The sequence $\{R(\varphi^k)/\lambda(\varphi)^k\}$ has the same limit points
as a periodic sequence $\{\sum_j\alpha_j\epsilon_j^k\}$
where $\alpha_j\in\bbz,\epsilon_j\in\bbc$ and $\epsilon_j^q=1$ for some $q>0$.
\item[$(3)$] {The set of limit points of the sequence $\{R(\varphi^k)/\lambda(\varphi)^k\}$ contains an interval.}
\end{enumerate}
\end{Thm}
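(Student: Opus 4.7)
The plan is to reduce the statement to its Nielsen counterpart, Theorem~\ref{N-BaBo2.6}, which is already recorded in the excerpt. As explained in Section~\ref{poly-B}, the endomorphism $\varphi$ is semi-conjugated by an affine map $(d,D)$ on $S$, and this affine map descends to a self-map $f:\Pi\bs{S}\to\Pi\bs{S}$ of the infra-solvmanifold of type $\R$ with $\varphi=\varphi_f$. Since the iterate $\varphi^k$ is in turn semi-conjugated by $(d,D)^k$ (a straightforward induction on the defining identity $\varphi(\alpha)(d,D)=(d,D)\alpha$) and induces $f^k$, Theorem~\ref{av} applied under the standing hypothesis that every $R(\varphi^k)$ is finite yields the term-by-term identity $R(\varphi^k)=N(f^k)$ for all $k\ge1$.

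This equality of sequences gives $R_\varphi(z)=N_f(z)$ as rational functions, so the lists of zeros and poles $\{(\lambda_i,\rho_i)\}$ coincide; in particular $\lambda(\varphi)=\lambda(f)$, $r(\varphi)=r(f)$, and the normalized sequences agree term-by-term, $R(\varphi^k)/\lambda(\varphi)^k=N(f^k)/\lambda(f)^k$. The three alternatives of Theorem~\ref{N-BaBo2.6} now transfer directly. Case (1), $\lambda(\varphi)=0$, is equivalent to $N_f(z)\equiv1$ and hence to $R_\varphi(z)\equiv1$. The asymptotically periodic alternative (2) and the interval alternative (3) are simply restatements of the corresponding conclusions for $\{N(f^k)/\lambda(f)^k\}$.

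In this formulation the only real work is the preliminary step of producing the affine lift $(d,D)$ and invoking $R(\varphi^k)=N(f^k)$, both of which are in place. If one insisted on a self-contained proof bypassing the Nielsen side, the main obstacle would be case (3): starting from $R(\varphi^k)=\sum_i\rho_i\lambda_i^k$ one would discard the terms with $|\lambda_i|<\lambda(\varphi)$, since they decay geometrically, and write $\lambda_i/\lambda(\varphi)=\epsilon_i\in\bbs^1$ for the remaining indices. One then splits on whether every such $\epsilon_i$ is a root of unity (giving a genuinely periodic residual sequence, which is case (2)) or at least one is not. The latter case requires the multidimensional Kronecker--Weyl equidistribution theorem on the closed subtorus of $\bbt^s$ generated by $(\epsilon_1,\ldots,\epsilon_s)$ to conclude that the real-valued exponential polynomial $\sum_i\rho_i\epsilon_i^k$ takes a continuum of limit values and in particular has a limit set containing an interval.
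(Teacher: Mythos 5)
Your reduction is exactly the paper's route: since all $R(\varphi^k)$ are finite, Theorem~\ref{av} (with the affine semi-conjugacy from Section~\ref{poly-B}) gives $R(\varphi^k)=N(f^k)$ for every $k$, hence $R_\varphi(z)=N_f(z)$ and $\lambda(\varphi)=\lambda(f)$, so the trichotomy is inherited verbatim from Theorem~\ref{N-BaBo2.6} (\cite[Theorem~4.1]{FL2}) — which is why the paper states the result without further proof. Your appended sketch of a self-contained argument via Kronecker--Weyl equidistribution is a correct description of how the Nielsen-side result is proved in the Babenko--Bogaty\v{\i} style, but it is not needed for this statement.
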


In Theorem~\ref{Radius2}, we showed that if $D_*$ has no eigenvalue $1$
then $\lambda(\varphi)=\sp(\bigwedge D_*)$. In fact, we have the following:

\begin{Lemma}\label{Radius3}
Let $\varphi$ be a homomorphism on a poly-Bieberbach group $\Pi$ of $S$
and let $\varphi$ be the semi-conjugate by an affine map $(d,D)$ on $S$.
If $\lambda(\varphi)\ge1$, then $\lambda(\varphi)=\sp(\bigwedge D_*)$.
\end{Lemma}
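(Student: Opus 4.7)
My plan is to combine the Cauchy--Hadamard identification $\lambda(\varphi)=\limsup_{k}R(\varphi^k)^{1/k}$ from Theorem~\ref{Radius1} with the averaging formula of Theorem~\ref{av}. Under the standing assumption of this section that all $R(\varphi^k)$ are finite, the averaging formula gives
$$R(\varphi^k)=\frac{1}{\#\Phi}\sum_{A\in\Phi}|\det(I-A_*D_*^k)|,$$
with every $\det(I-A_*D_*^k)\ne 0$. Specializing $A=I$ already shows that no eigenvalue of $D_*$ is a root of unity, so $D_*$ has no eigenvalue $1$ and the conclusion in fact follows directly from Theorem~\ref{Radius2}. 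I would nonetheless sketch a self-contained argument that compares $\limsup_k|\det(I-A_*D_*^k)|^{1/k}$ with $\sp(\bigwedge D_*)$ from above and below, not appealing to \cite{FL2}.

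For the upper bound $\lambda(\varphi)\le\sp(\bigwedge D_*)$, I would expand
$$\det(I-A_*D_*^k)=\sum_{j=0}^{\dim S}(-1)^j\tr\!\bigl((\bigwedge\nolimits^{j} A_*)(\bigwedge\nolimits^{j} D_*)^k\bigr)$$
using the multiplicativity $\bigwedge\nolimits^{j}(A_*D_*^k)=(\bigwedge\nolimits^{j} A_*)(\bigwedge\nolimits^{j} D_*)^k$. Since $\Phi$ is finite, the norms $\|\bigwedge\nolimits^{j} A_*\|$ are uniformly bounded in $A$, and Gelfand's spectral radius formula gives $\|(\bigwedge\nolimits^{j} D_*)^k\|^{1/k}\to\sp(\bigwedge\nolimits^{j} D_*)$. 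Because only finitely many $j$'s contribute, taking $k$-th roots preserves the maximum, so
$$\limsup_k|\det(I-A_*D_*^k)|^{1/k}\le\max_j\sp(\bigwedge\nolimits^{j} D_*)=\sp(\bigwedge D_*),$$
uniformly in $A\in\Phi$; averaging over $\Phi$ yields $\lambda(\varphi)\le\sp(\bigwedge D_*)$.

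For the lower bound I would use only the $A=I$ term. Writing $|\det(I-D_*^k)|=\prod_i|1-\mu_i^k|$ for the eigenvalues $\mu_i$ of $D_*$, factors with $|\mu_i|>1$ are asymptotic to $|\mu_i|^k$, while those with $|\mu_i|<1$ tend to $1$. The delicate case is $|\mu_i|=1$: none of these $\mu_i$ is a root of unity, so Weyl's simultaneous equidistribution theorem provides a subsequence $k_n\to\infty$ along which every factor $|1-\mu_i^{k_n}|$ with $|\mu_i|=1$ is bounded below by some fixed positive constant. On this subsequence
$$|\det(I-D_*^{k_n})|\ge c\prod_{|\mu_i|>1}|\mu_i|^{k_n}$$
for some $c>0$, whence $\limsup_k|\det(I-D_*^k)|^{1/k}\ge\sp(\bigwedge D_*)$ and so $\lambda(\varphi)\ge\sp(\bigwedge D_*)$. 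The main obstacle is precisely this treatment of the unit-circle eigenvalues: pointwise $|1-\mu^k|$ may be arbitrarily close to zero infinitely often, so a simultaneous subsequence argument via equidistribution is essential. The hypothesis $\lambda(\varphi)\ge 1$ is used only to rule out the degenerate case $R_\varphi(z)\equiv 1$, in which $\lambda(\varphi)=0$ while $\sp(\bigwedge D_*)\ge 1$ always.
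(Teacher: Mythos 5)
Your argument is correct, and it is worth recording that the paper itself prints no proof of Lemma~\ref{Radius3}: the lemma is stated as a refinement of Theorem~\ref{Radius2} (dropping the eigenvalue-$1$ hypothesis in favour of $\lambda(\varphi)\ge1$), the intended argument being the Nielsen-number analogue in \cite{FL2}, which is essentially your second, self-contained computation with the averaging formula. Your first observation is a genuinely different and shorter route, and it is specific to the Reidemeister setting: since $\lambda(\varphi)$ is only defined under the standing assumption that all $R(\varphi^k)$ are finite, Theorem~\ref{av} forces $\det(I-A_*D_*^k)\ne0$ for every $A\in\Phi$ and every $k$, and taking $A=I$ shows that no eigenvalue of $D_*$ is a root of unity; hence Theorem~\ref{Radius2} applies verbatim (and in fact $\lambda(\varphi)\ge1$ is automatic, as $R(\varphi^k)\ge1$ always). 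This is exactly what fails for Nielsen numbers, where $N(f^k)$ may vanish and $D_*$ may have eigenvalue $1$ while $\lambda(f)\ge1$, which is why the companion paper needs the longer argument; your shortcut buys a one-line proof here, while the direct argument is the one that survives in the Nielsen case.

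One caveat on your self-contained lower bound: Weyl's simultaneous equidistribution theorem requires $1,\theta_1,\dots,\theta_m$ (writing the unit-modulus eigenvalues as $\mu_j=e^{2\pi i\theta_j}$) to be linearly independent over $\bbq$, which you cannot assume; without independence the sequence $(k\theta_1,\dots,k\theta_m)$ equidistributes only in a proper closed subgroup or coset of the torus. What you actually need is weaker and always true, namely simultaneous recurrence: by pigeonhole there are infinitely many $d$ such that each $d\theta_j$ is within $\epsilon$ of an integer, and then for $k=d+1$ the distance from $k\theta_j$ to the nearest integer is at least that of $\theta_j$ minus $\epsilon$; since no $\mu_j$ is a root of unity (again by finiteness of the $R(\varphi^k)$), each $\theta_j$ is irrational and these distances are uniformly positive, giving the required lower bound on every $|1-\mu_j^{k}|$ along a subsequence. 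With that repair the direct argument is complete; note also that when $D_*$ has no eigenvalue of modulus greater than $1$ the needed inequality is just $\lambda(\varphi)\ge1=\sp\left(\bigwedge D_*\right)$, which is the hypothesis itself.
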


It is important to know not only the rate of growth of the sequence $\{R(\varphi^k)\}$
but also the frequency with which the largest Reidemeister number is encountered.
The following theorem shows that this sequence grows relatively densely.
The following are variations of Theorem~2.7, Proposition~2.8 and Corollary~2.9 of \cite{BaBo}.

\begin{Thm}\label{BaBo2.7}
Let $\varphi:\Pi\to\Pi$ be an endomorphism on a poly-Bieberbach group $\Pi$
such that all $R(\varphi^k)$ are finite.
If $\lambda(\varphi)\ge1$, then there exist $\gamma>0$ and a natural number $N$
such that for any $m> N$ there is an $\ell\in\{0,1,\cdots,n(\varphi)-1\}$
such that $R(\varphi^{m+\ell})/\lambda(\varphi)^{m+\ell}>\gamma$.
\end{Thm}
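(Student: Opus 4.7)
The plan is to adapt a Vandermonde-style argument (in the spirit of \cite[Theorem~4.4]{FL2}) to the Reidemeister setting, combining the explicit decomposition \eqref{N_k} with the obvious fact that every $R(\varphi^k)$ is a nonnegative integer.

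First, factor out the dominant growth. Relabel the indices in \eqref{N_k} so that $|\lambda_1|=\cdots=|\lambda_n|=\lambda(\varphi)$ and $|\lambda_j|<\lambda(\varphi)$ for $j>n$, where $n=n(\varphi)$. Setting $\epsilon_i=\lambda_i/\lambda(\varphi)$ (which lies on the unit circle),
\[
\frac{R(\varphi^k)}{\lambda(\varphi)^k}=P(k)+\delta(k),\qquad P(k):=\sum_{i=1}^{n}\rho_i\,\epsilon_i^{k},\quad \delta(k):=\sum_{j>n}\rho_j\!\left(\frac{\lambda_j}{\lambda(\varphi)}\right)^{\!k}.
\]
Since $|\lambda_j/\lambda(\varphi)|<1$ for $j>n$, we have $\delta(k)\to 0$ as $k\to\infty$; the whole problem is thus to bound $P$ from below on windows of length $n$.

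Next comes the Vandermonde step. For any $m$, stacking the equations defining $P(m+j)$ for $0\le j\le n-1$ gives
\[
\bigl(P(m+j)\bigr)_{j=0}^{n-1}=V\,\mathrm{diag}(\epsilon_i^{m})\,(\rho_i)_{i=1}^{n},
\]
where $V$ is the $n\x n$ matrix with entries $V_{ji}=\epsilon_i^{j}$. Because the $\epsilon_i$ are pairwise distinct, $V$ is invertible; since the diagonal factor is unitary, its inverse has operator norm depending only on the $\epsilon_i$. Hence there is a constant $C_0>0$, independent of $m$, with
\[
\max_{i}|\rho_i|\le C_0^{-1}\max_{0\le j\le n-1}|P(m+j)|.
\]
Each $\rho_i$ is a nonzero integer, so the left side is at least $1$, giving $\max_{0\le j\le n-1}|P(m+j)|\ge C_0$ for every $m$.

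Finally, I would use the positivity of $R(\varphi^k)$ to convert this into a positive (signed) lower bound. Choose $N$ so that $|\delta(k)|<C_0/2$ whenever $k>N$. For any $m>N$ and any $\ell\in\{0,\dots,n-1\}$ the inequality $0\le R(\varphi^{m+\ell})/\lambda(\varphi)^{m+\ell}=P(m+\ell)+\delta(m+\ell)$ forces $P(m+\ell)>-C_0/2$. If $\ell^\ast$ attains the maximum of $|P(m+\ell)|$, the case $P(m+\ell^\ast)\le -C_0$ is excluded by this sign constraint, so $P(m+\ell^\ast)\ge C_0$ and therefore
\[
\frac{R(\varphi^{m+\ell^\ast})}{\lambda(\varphi)^{m+\ell^\ast}}\ge C_0-\tfrac{C_0}{2}=\tfrac{C_0}{2}.
\]
Setting $\gamma:=C_0/2$ finishes the proof. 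The main obstacle is the Vandermonde lower bound on $\max_{j}|P(m+j)|$ that is uniform in $m$; once this is in hand, the nonnegativity of Reidemeister numbers does the rest of the work by ruling out the wrong sign in the dominant term.
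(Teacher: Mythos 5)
Your proposal is correct, and it is essentially the intended argument: the paper proves nothing here beyond pointing to Theorem~2.7 of \cite{BaBo} (and its Nielsen analogue, \cite[Theorem~4.4]{FL2}), whose mechanism is exactly your two steps --- invert the Vandermonde system on a window of length $n(\varphi)$ to get a uniform lower bound $\max_{0\le j\le n(\varphi)-1}|P(m+j)|\ge C_0$ from the fact that the $\rho_i$ are nonzero integers, then use nonnegativity of $R(\varphi^k)$ to rule out the negative sign for the dominant term. The only cosmetic remark is that to get the strict inequality $>\gamma$ you should either note that $|\delta(k)|<C_0/2$ is strict for $k>N$ or simply take $\gamma=C_0/4$.
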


\begin{Prop}
\label{BaBo2.8}
Let $\varphi:\Pi\to\Pi$ be an endomorphism on a poly-Bieberbach group
such that all $R(\varphi^k)$ are finite and   such that $\lambda(\varphi)>1$.
Then for any $\epsilon>0$, there exists $N$ such that
if $R(\varphi^m)/\lambda(\varphi)^m\ge\epsilon$ for $m>N$,
then the Dold multiplicity $I_m(\varphi)$ satisfies
$$
|I_m(\varphi)|\ge\frac{\epsilon}{2}\lambda(\varphi)^m.
$$
\end{Prop}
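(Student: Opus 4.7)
The plan is to exploit the closed-form expression $R(\varphi^k)=\sum_{i=1}^{r(\varphi)}\rho_i\lambda_i^k$ from \eqref{N_k} together with M\"obius inversion, and show that the ``lower-order" terms in the Dold multiplicity $I_m(\varphi)=\sum_{d\mid m}\mu(m/d)R(\varphi^d)$ are negligible compared with $R(\varphi^m)$ when $\lambda(\varphi)>1$.

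First I would separate the top-order term from the rest by writing
$$
I_m(\varphi)=R(\varphi^m)+\sum_{\substack{d\mid m\\ d<m}}\mu(m/d)R(\varphi^d),
$$
so that
$$
|I_m(\varphi)|\ge |R(\varphi^m)|-\sum_{\substack{d\mid m\\ d<m}}|R(\varphi^d)|.
$$
Using the identity $R(\varphi^d)=\sum_{i=1}^{r(\varphi)}\rho_i\lambda_i^d$ and the definition $\lambda(\varphi)=\max_i|\lambda_i|$, I set $A=\sum_{i=1}^{r(\varphi)}|\rho_i|$, which is a constant depending only on $\varphi$, and obtain the uniform bound $|R(\varphi^d)|\le A\,\lambda(\varphi)^d$ for all $d\ge 1$.

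The next step is to estimate the tail. Every proper divisor $d$ of $m$ satisfies $d\le m/2$; since $\lambda(\varphi)>1$, we get $\lambda(\varphi)^d\le \lambda(\varphi)^{m/2}$. Writing $\tau(m)$ for the number of divisors of $m$, this yields
$$
\sum_{\substack{d\mid m\\ d<m}}|R(\varphi^d)|\le A\,\tau(m)\,\lambda(\varphi)^{m/2}.
$$
Because $\tau(m)$ grows sub-exponentially (for instance $\tau(m)=O(m^{\epsilon})$ for any $\epsilon>0$) while $\lambda(\varphi)^{m/2}/\lambda(\varphi)^m=\lambda(\varphi)^{-m/2}\to 0$, we have $A\,\tau(m)\,\lambda(\varphi)^{m/2}=o(\lambda(\varphi)^m)$. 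Hence there exists $N$, depending only on $\varphi$ and $\epsilon$, such that for every $m>N$,
$$
A\,\tau(m)\,\lambda(\varphi)^{m/2}\le \tfrac{\epsilon}{2}\,\lambda(\varphi)^m.
$$

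Finally, assuming $R(\varphi^m)/\lambda(\varphi)^m\ge\epsilon$ for this $m>N$, we combine the two bounds:
$$
|I_m(\varphi)|\ge R(\varphi^m)-\tfrac{\epsilon}{2}\lambda(\varphi)^m\ge \epsilon\,\lambda(\varphi)^m-\tfrac{\epsilon}{2}\,\lambda(\varphi)^m=\tfrac{\epsilon}{2}\,\lambda(\varphi)^m,
$$
as desired. The only mildly delicate point is the sub-exponential bound for $\tau(m)$; this is standard, so there is no real obstacle, and the argument is essentially a direct truncation estimate on the M\"obius sum.
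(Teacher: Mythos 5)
Your proof is correct and follows essentially the same route as the expected argument (the paper gives no proof here but defers to Babenko--Bogaty\v{\i}, whose Proposition 2.8 is proved by exactly this truncation of the M\"obius sum): isolate the $d=m$ term, bound every proper-divisor term by $A\lambda(\varphi)^{d}$ with $d\le m/2$, and absorb the total into $\tfrac{\epsilon}{2}\lambda(\varphi)^m$ for large $m$. One minor remark: the sub-exponential bound on the divisor function is not needed at all --- the trivial estimate $\tau(m)\le m$, or replacing the sum over proper divisors by the geometric sum $\sum_{d\le m/2}\lambda(\varphi)^d\le\lambda(\varphi)^{m/2+1}/(\lambda(\varphi)-1)$, already gives $o(\lambda(\varphi)^m)$ because $\lambda(\varphi)>1$.
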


Theorem~\ref{BaBo2.7} and Proposition~\ref{BaBo2.8} immediately imply the following:
\begin{Cor}\label{BaBo2.9}
Let $\varphi:\Pi\to\Pi$ be an endomorphism on a poly-Bieberbach group such that all $R(\varphi^k)$
are finite and such that $\lambda(\varphi)>1$.
Then there exist $\gamma>0$ and a natural number $N$ such that if $m\ge N$
then there exists $\ell$ with $0\le\ell\le n(\varphi)-1$
such that $|I_{m+\ell}(\varphi)|/\lambda(\varphi)^{m+\ell}\ge \gamma/2$.
In particular $I_{m+\ell}(\varphi)\ne0$ and so $A_{m+\ell}(\varphi)\ne0$.
\end{Cor}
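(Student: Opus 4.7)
The statement is explicitly advertised as a direct consequence of Theorem~\ref{BaBo2.7} and Proposition~\ref{BaBo2.8}, so the plan is simply to chain those two results together and track the constants carefully.

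First, I would apply Theorem~\ref{BaBo2.7}, using that $\lambda(\varphi) > 1 \ge 1$, to extract a constant $\gamma > 0$ and a threshold $N_1$ with the following property: whenever $m > N_1$, there is an $\ell = \ell(m) \in \{0, 1, \ldots, n(\varphi)-1\}$ such that
\[
\frac{R(\varphi^{m+\ell})}{\lambda(\varphi)^{m+\ell}} > \gamma.
\]
Next I would feed this same $\gamma$ into Proposition~\ref{BaBo2.8} as the value of $\epsilon$, obtaining a threshold $N_2$ such that for any single index $n > N_2$ with $R(\varphi^n)/\lambda(\varphi)^n \ge \gamma$, the Dold multiplicity satisfies $|I_n(\varphi)| \ge (\gamma/2)\lambda(\varphi)^n$.

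Setting $N := \max\{N_1, N_2\} + 1$, any $m \ge N$ yields $m + \ell \ge m > N_2$, so the hypothesis of Proposition~\ref{BaBo2.8} is satisfied at the index $n = m+\ell$ supplied by Theorem~\ref{BaBo2.7}. This gives
\[
\frac{|I_{m+\ell}(\varphi)|}{\lambda(\varphi)^{m+\ell}} \ge \frac{\gamma}{2},
\]
which is exactly the asserted inequality. Since $\lambda(\varphi) > 1 > 0$ and $\gamma > 0$, the right-hand side is strictly positive, so $I_{m+\ell}(\varphi) \ne 0$; and since $I_{m+\ell}(\varphi) = (m+\ell)\,A_{m+\ell}(\varphi)$ by the definitions recalled in Section~4, we conclude $A_{m+\ell}(\varphi) \ne 0$ as well.

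There is no real obstacle here: the argument is essentially bookkeeping of thresholds and matching $\epsilon$ to the $\gamma$ of the previous theorem. The only small care needed is to ensure the index at which we invoke Proposition~\ref{BaBo2.8} is the shifted index $m+\ell$, not $m$ itself; taking $N$ at least as large as $N_2$ and using $\ell \ge 0$ handles this cleanly.
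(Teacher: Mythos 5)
Your proposal is correct and matches the paper exactly: the paper offers no separate argument, stating only that Theorem~\ref{BaBo2.7} and Proposition~\ref{BaBo2.8} immediately imply the corollary, which is precisely the chaining you carry out (with $\epsilon=\gamma$ and $N=\max\{N_1,N_2\}+1$). Your extra care about applying Proposition~\ref{BaBo2.8} at the shifted index $m+\ell$ and the identity $I_{m+\ell}(\varphi)=(m+\ell)A_{m+\ell}(\varphi)$ is exactly the bookkeeping the paper leaves implicit.
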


\begin{Rmk}\label{density}
We can state a little bit more about the density of the set
of algebraic periods $\calA(\varphi)=\{m\in\bbn\mid A_m(\varphi)\ne0\}$.
We consider the notion of the {\bf lower density} $\DA(\varphi)$
of the set $\calA(\varphi)\subset\bbn$:
$$
\DA(\varphi)=\liminf_{k\to\infty}\frac{\#(\calA(\varphi)\cap[1,k])}{k}.
$$

{By Corollary~\ref{BaBo2.9}, when $\lambda(\varphi)>1$, we have $\DA(\varphi)\ge 1/n(\varphi)$.
On the other hand, Theorem~\ref{BaBo2.6} implies the following: If $\lambda(\varphi)=0$ then $R(\varphi^k)=0$ for all $k>0$; by Theorem~\ref{Alg} $A_k(\varphi)=0$ for all $k>0$, hence $DA(\varphi)=\emptyset$.
Consider Case (2) of Theorem~\ref{BaBo2.6}, that is, the sequence $\{R(\varphi^k)/\lambda(\varphi)^k\}$ has the same limit points as the periodic sequence $\{\sum_{j=1}^{n(\varphi)}\rho_je^{2i\pi(k\theta_j)}\}$
of period $q=\lcm(q_1,\cdots,q_{n(\varphi)})$.
By Theorem~\ref{BaBo2.7}, we have $\DA(\varphi)\ge 1/q$.
Finally consider Case (3). Then the sequence $\{R(\varphi^k)/\lambda(\varphi)^k\}$ asymptotically
has a subsequence $\{\sum_{j\in\calS}\rho_je^{2i\pi(k\theta_j)}\}$ where
$\calS = \{j_1,\cdots,j_s\}$ and $\{\theta_{j_1},\cdots,\theta_{j_s},1\}$ is linearly independent over the integers. Therefore by \cite[Theorem 6, p.~\!91]{Ch}, the sequence $(k\theta_{j_1},\cdots,k\theta_{j_s})$ is uniformly distributed. It follows that $\DA(\varphi) = 1$.}
\end{Rmk}

\section{Periodic $[\varphi]$-orbits}

In this section, we shall give an estimate from below the number of {\bf periodic $[\varphi]$-orbits}
of an endomorphism $\varphi$ on a poly-Bieberbach group based on facts discussed in Section~\ref{poly-B}.
We keep in mind that all periodic classes are essential, see Proposition~\ref{ess}.

We denote by $\calO([\varphi],k)$ the set of all (essential) periodic orbits of $[\varphi]$ with length $\le k$.
Thus
$$
\calO([\varphi],k)=\{\langle[\alpha]^m\rangle\mid \alpha\in\Pi, m\le k\}.
$$
Recalling from Section~\ref{poly-B} that $\calO([\varphi],k)=\calO(f,k)$,
we can restate Theorem~\ref{N_BaBo4.2} as follows:

\begin{Thm}\label{BaBo4.2}
Let $\varphi:\Pi\to\Pi$ be an endomorphism on a poly-Bieberbach group
such that all $R(\varphi^k)$ are finite.
Suppose that the sequence $R(\varphi^k)$ is unbounded.
Then there exists a natural number $N_0$ such that
$$
k\ge N_0\Longrightarrow \#\calO([\varphi],k)\ge \frac{k-N_0}{r(\varphi)}.
$$
\end{Thm}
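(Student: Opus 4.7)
My plan is to reduce the assertion to its topological counterpart, Theorem~\ref{N_BaBo4.2}, via the dictionary between the Reidemeister theory of $\varphi$ and the Nielsen theory of an associated map $f$ on an infra-solvmanifold of type $\R$. Since $\Pi$ is a poly-Bieberbach group, by the discussion in Section~\ref{poly-B} I may regard $\Pi$ as a discrete cocompact subgroup of $\aff(S)=S\rtimes\aut(S)$ for some connected, simply connected solvable Lie group $S$ of type $\R$, and $\varphi$ is semi-conjugate by some affine map $(d,D):S\to S$ to the endomorphism induced by a self-map $f$ on the infra-solvmanifold $\Pi\bs S$ of type $\R$. This is the bridge that lets me import the Nielsen statement.

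The key translations I will invoke are: (i) since all $R(\varphi^k)$ are assumed finite, Theorem~\ref{av} gives $R(\varphi^k)=N(f^k)$ for every $k>0$, so the hypothesis that $\{R(\varphi^k)\}$ is unbounded is equivalent to $\{N(f^k)\}$ being unbounded; (ii) the Reidemeister and Nielsen zeta functions coincide, $R_\varphi(z)=N_f(z)$, hence $r(\varphi)=r(f)$ (both count the same number of zeros and poles); (iii) as established in Section~\ref{poly-B}, one has the set-level identification
\[
\calO([\varphi],k)=\calO(f,k),
\]
because each essential Reidemeister class $[\alpha]^n$ corresponds to a unique essential periodic point of $f$ (using Proposition~\ref{ess}), and the $[\varphi]$-orbit structure matches the $f$-orbit structure on essential periodic points via the commuting diagram relating $[\varphi]$ and $[f]$.

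With these identifications in place, the statement $\#\calO([\varphi],k)\ge (k-N_0)/r(\varphi)$ is literally the same inequality as $\#\calO(f,k)\ge (k-N_0)/r(f)$, which is the conclusion of Theorem~\ref{N_BaBo4.2} under precisely the hypothesis that $\{N(f^k)\}$ is unbounded. Hence the proof reduces to citing Theorem~\ref{N_BaBo4.2} applied to $f$ and transporting the conclusion back through the dictionary above.

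The only point that requires a bit of care, and the one I expect to be the main (mild) obstacle, is confirming cleanly the identification $\calO([\varphi],k)=\calO(f,k)$: one must check that the length of the $[\varphi]$-orbit of $[\alpha]^n$ equals the length of the $f$-orbit of the corresponding essential fixed point class, and that ``essential'' is automatic on the Reidemeister side under the finiteness hypothesis on $R(\varphi^k)$. Both facts are spelled out in Section~\ref{poly-B}: all Reidemeister classes of $\varphi^n$ are essential when $R(\varphi^n)<\infty$, and the boosting/shift diagrams show that the bijection $\eta$ between periodic Reidemeister classes and essential periodic point classes intertwines $[\varphi]$ with $[f]$. Once these are recorded, the reduction to Theorem~\ref{N_BaBo4.2} is immediate and completes the proof.
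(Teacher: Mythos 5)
Your proposal is correct and follows essentially the same route as the paper: the paper obtains Theorem~\ref{BaBo4.2} precisely by invoking the identifications $R(\varphi^k)=N(f^k)$ (Theorem~\ref{av}), hence $r(\varphi)=r(f)$, together with $\calO([\varphi],k)=\calO(f,k)$ from Section~\ref{poly-B}, and then restating Theorem~\ref{N_BaBo4.2}. The points you flag as needing care (all Reidemeister classes essential under finiteness, the orbit-length matching via the diagrams intertwining $[\varphi]$ and $[f]$) are exactly the facts the paper records in Section~\ref{poly-B} for this purpose.
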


\begin{Prop}\label{epp}
Let $\varphi:\Pi\to\Pi$ be an endomorphism on a poly-Bieberbach group
such that all $R(\varphi^k)$ are finite.
For every $k>0$, we have
\begin{align*}
\# \calI\calR(\varphi^k)=\sum_{d\mid k}\mu\!\left(\frac{k}{d}\right) R(\varphi^d)=I_k(\varphi).
\end{align*}
\end{Prop}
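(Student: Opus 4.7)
The plan is to invoke Möbius inversion on the height decomposition of $\calR[\varphi^k]$ that was established in Section~\ref{poly-B}. Recall that every Reidemeister class $[\alpha]^k \in \calR[\varphi^k]$ has a well-defined depth $d \mid k$, which equals the height of the unique irreducible class $[\beta]^d \in \calI\calR(\varphi^d)$ that boosts via $\iota_{d,k}$ to $[\alpha]^k$. This yields the disjoint decomposition
$$
\calR[\varphi^k] \;=\; \bigsqcup_{d\mid k} \calI\calR(\varphi^d),
$$
and hence the cardinality identity
$$
R(\varphi^k) \;=\; \sum_{d\mid k} \#\calI\calR(\varphi^d).
$$

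First I would verify (or simply cite from the discussion preceding Theorem~\ref{NP}) that the boosting function $\iota_{d,k}$ gives a bijection between $\calI\calR(\varphi^d)$ and the set of depth-$d$ classes in $\calR[\varphi^k]$. Since every class has a unique depth, the above disjoint union follows.

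Next, since the assumption that all $R(\varphi^k)$ are finite forces every $\#\calI\calR(\varphi^d)$ to be finite for $d \mid k$, I can apply the classical Möbius inversion formula to the relation $R(\varphi^k) = \sum_{d\mid k}\#\calI\calR(\varphi^d)$ to obtain
$$
\#\calI\calR(\varphi^k) \;=\; \sum_{d\mid k} \mu\!\left(\frac{k}{d}\right) R(\varphi^d).
$$
By the very definition of the Dold multiplicity $I_k(\varphi) = \sum_{d\mid k}\mu(k/d) R(\varphi^d)$ (using $R(\varphi^k) = N(f^k)$ via Theorem~\ref{av}), the right-hand side is exactly $I_k(\varphi)$, completing the proof.

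There is essentially no obstacle beyond the verification of the height decomposition, which is already laid out in Section~\ref{poly-B}; the argument is purely combinatorial Möbius inversion. The only subtlety worth flagging is that the finiteness hypothesis on all $R(\varphi^k)$ is what lets us manipulate these sums as integer identities rather than as formal sums, and what ensures (via Theorem~\ref{av} and Proposition~\ref{ess}) that the height/depth formalism coincides with the minimal-period formalism for the associated affine map $f$.
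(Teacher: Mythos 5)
Your proposal is correct and follows essentially the same route as the paper: the paper's proof simply applies M\"{o}bius inversion to the identity $R(\varphi^k)=\sum_{d\mid k}\#\calI\calR(\varphi^d)$ established in Section~\ref{poly-B} and identifies the result with the Dold multiplicity $I_k(\varphi)$. Your additional remarks about the depth decomposition and the role of the finiteness hypothesis are consistent with the material already laid out there.
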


\begin{proof}
We apply the M\"{o}bius inversion formula to the identity
$$
R(\varphi^k)=\sum_{d\mid k}\#\calI\calR(\varphi^d)
$$
in Section~\ref{poly-B} to obtain
$\#\calI\calR(\varphi^k)=\sum_{d\mid k}\mu\!\left(\frac{k}{d}\right)R(\varphi^d)$,
which is exactly the Dold multiplicity $I_k(\varphi)$.
\end{proof}

\begin{Def}
When all $R(\varphi^k)$ are finite, we consider the mod $2$ reduction of
the Reidemeister number $R(\varphi^k)$ of $f^k$, written $R^{(2)}(\varphi^k)$.
A positive integer $k$ is a {\bf $R^{(2)}$-period}
of $\varphi$ if $R^{(2)}(\varphi^{k+i})=R^{(2)}(\varphi^i)$
for all $i\ge1$. We denote the minimal $R^{(2)}$-period of $\varphi$ by $\alpha^{(2)}(\varphi)$.
\end{Def}

\begin{Prop}[{\cite[Proposition~1]{Matsuoka}}]
Let $p$ be a prime number and let $A$ be a square matrix with entries in the field $\bbf_p$.
Then there exists $k$ with $(p,k)=1$ such that
$$
\tr A^{k+i}=\tr A^i
$$
for all $i\ge1$.
\end{Prop}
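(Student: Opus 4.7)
The plan is to reduce the problem to a statement about eigenvalues in the algebraic closure and then exploit the fact that the multiplicative group of any finite extension of $\bbf_p$ has order coprime to $p$.

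First I would note that for any square matrix $A$ over a field, if $\lambda_1,\ldots,\lambda_n$ are its eigenvalues in the algebraic closure listed with algebraic multiplicity (so $A$ is conjugate over $\overline{\bbf_p}$ to an upper triangular matrix with these diagonal entries), then
\[
\tr A^j=\sum_{i=1}^n \lambda_i^j \qquad\text{for all } j\ge 1,
\]
because the diagonal of the upper triangular form of $A^j$ is $\lambda_1^j,\ldots,\lambda_n^j$, and trace is invariant under conjugation. Thus it is enough to find $k$ with $(k,p)=1$ such that $\lambda_i^{k+j}=\lambda_i^j$ for every eigenvalue $\lambda_i$ and every $j\ge 1$.

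Next, since the characteristic polynomial of $A$ lies in $\bbf_p[x]$ and has degree $n$, its roots all lie in some finite extension $\bbf_{p^m}$ of $\bbf_p$. The nonzero eigenvalues therefore lie in the cyclic group $\bbf_{p^m}^\times$ of order $p^m-1$, a number coprime to $p$. I would take $k=p^m-1$. For each nonzero eigenvalue $\lambda_i$ one has $\lambda_i^k=1$, hence $\lambda_i^{k+j}=\lambda_i^j$ for every $j$. For each eigenvalue $\lambda_i=0$ and each $j\ge 1$ one has $\lambda_i^{k+j}=0=\lambda_i^j$. Summing over $i$ gives $\tr A^{k+j}=\tr A^j$ for all $j\ge 1$, and $(k,p)=\gcd(p^m-1,p)=1$, as required.

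I do not anticipate a genuine obstacle here; the only subtlety worth flagging is the role of the hypothesis $i\ge 1$, which is precisely what is needed to absorb the zero eigenvalues (at $i=0$ one would compare $\lambda^k=0$ with $\lambda^0=1$, and the conclusion fails). The use of upper triangularization over $\overline{\bbf_p}$ is standard and avoids any need to invoke Jordan form or semisimplicity.
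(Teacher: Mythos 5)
Your proof is correct. The triangularization over $\overline{\bbf_p}$ gives $\tr A^j=\sum_i\lambda_i^j$, the choice $k=p^m-1$ (with $\bbf_{p^m}$ a splitting field of the characteristic polynomial) kills the nonzero eigenvalues' contribution to the discrepancy since $\lambda^{p^m-1}=1$, the zero eigenvalues are handled exactly by the hypothesis $i\ge1$ as you flag, and $\gcd(p^m-1,p)=1$. Note, however, that the paper does not prove this statement at all: it is quoted verbatim as Proposition~1 of Matsuoka's paper and used as a black box (to deduce, via the identity $R(\varphi^k)=\tr(M_+\oplus -M_-)^k$, that the minimal $R^{(2)}$-period exists and is odd), so there is no in-paper argument to compare with. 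Your argument is a clean, self-contained justification; an alternative elementary route one sometimes sees is to split $A$ into its invertible and nilpotent parts (Fitting decomposition) and take $k$ to be the order of the invertible part in $\GL$ over $\bbf_p$, which is again coprime to $p$ for the same reason ($\#\GL_n(\bbf_{p^m})$ has its $p$-part coming only from unipotents, or more simply the order of an invertible matrix over $\bbf_{p^m}$ divides a power of $p^{m}-1$ times a $p$-power only through unipotent factors, which are absent from the semisimple part); your eigenvalue computation avoids these fine points and is preferable for its directness.
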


Recalling \eqref{N2}: $R(\varphi^k)=N(f^k)=\tr M_+^k-\tr M_-^k=\tr(M_+\oplus -M_-)^k$,
we can see easily that the minimal $R^{(2)}$-period $\alpha^{(2)}(\varphi)$ always exists
and must be an odd number.

Now we obtain a result which resembles \cite[Theorem~2]{Matsuoka}.
\begin{Thm}\label{3.3.11}
Let $\varphi:\Pi\to\Pi$ be an endomorphism on a poly-Bieberbach group
such that all $R(\varphi^k)$ are finite.
Let $k>0$ be an odd number. Suppose that $\alpha^{(2)}(\varphi)^2\mid k$ or $p\mid k$
where $p$ is a prime such that $p\equiv 2^i\mod{\alpha^{(2)}(\varphi)}$ for some $i\ge0$.
Then
$$
\frac{\NP_k(\varphi)}{k}=\frac{\#\calI\calR(\varphi^k)}{k}
$$
is even.
\end{Thm}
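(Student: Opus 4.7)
The plan is to reduce the claim to a mod-$2$ congruence for the Möbius transform of $\{R(\varphi^d)\}$ and then exploit the Frobenius identity together with the two divisibility hypotheses. First, by combining Theorem~\ref{NP} with Proposition~\ref{epp}, I would rewrite
$$
\frac{\NP_k(\varphi)}{k} = \frac{\#\calI\calR(\varphi^k)}{k} = \frac{I_k(\varphi)}{k} = \frac{1}{k}\sum_{d\mid k}\mu(k/d)\,R(\varphi^d).
$$
Since $k$ is odd, this quotient is even if and only if $I_k(\varphi)$ is itself even; writing $a_d := R^{(2)}(\varphi^d) \in \{0,1\}$, the theorem reduces to showing
$$
\sum_{d\mid k}\mu(k/d)\,a_d \equiv 0 \pmod{2}.
$$

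Next I would establish the key mod-$2$ structural property of $\{a_d\}$: for every $m\geq 1$, $a_{2m}=a_m$. By \eqref{N2}, $R(\varphi^k)=\tr(M^k)$ for the fixed integer matrix $M=M_+\oplus(-M_-)$, and for any integer matrix $A$ one has $\tr(A^2)\equiv (\tr A)^2\equiv \tr(A)\pmod{2}$. Combined with the defining periodicity $a_{m+\alpha}=a_m$—where $\alpha:=\alpha^{(2)}(\varphi)$, already observed to be odd—this forces $a_n$ to be constant on the orbits of multiplication by $2$ in $\bbz/\alpha\bbz$. In particular, whenever $p\equiv 2^i\pmod{\alpha}$, we have $a_{pm}=a_{2^im}=a_m$ for every $m\geq 1$.

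For Case~2 (some prime $p\mid k$ satisfies $p\equiv 2^i\pmod{\alpha}$), I would write $k=p^e k'$ with $\gcd(k',p)=1$, note that only the divisors $d=p^j d'$ with $d'\mid k'$ and $j\in\{e-1,e\}$ survive the $\mu$-filter, and use $\mu(pn)=-\mu(n)$ for $\gcd(n,p)=1$ to collapse the sum to
$$
I_k(\varphi) = \sum_{d'\mid k'}\mu(k'/d')\bigl[R(\varphi^{p^e d'}) - R(\varphi^{p^{e-1}d'})\bigr].
$$
Each bracket is then even by iterating $a_{pm}=a_m$.

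For Case~1 ($\alpha^2\mid k$), the crucial observation is that every $d\mid k$ with $k/d$ squarefree automatically satisfies $\alpha\mid d$: squarefreeness gives $v_p(d)\geq v_p(k)-1$ for every prime $p$, and for $p\mid\alpha$ the hypothesis yields $v_p(k)\geq 2v_p(\alpha)$, whence $v_p(d)\geq v_p(\alpha)$. Therefore $a_d=a_\alpha$ for every contributing divisor, and the Möbius sum collapses to
$$
I_k(\varphi) \equiv a_\alpha\!\sum_{\substack{e\mid k\\ e\text{ squarefree}}}\!\mu(e) \equiv a_\alpha\prod_{p\mid k}(1-1) \equiv 0 \pmod{2}
$$
whenever $k$ has at least one prime factor. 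The main obstacle is isolating these two cancellation mechanisms—Frobenius-plus-translation in Case~2, divisibility-plus-Möbius in Case~1—and verifying each one is forced by its respective hypothesis; once that is in hand, everything else is routine Möbius bookkeeping.
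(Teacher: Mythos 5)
Your proposal is correct and is essentially the argument the paper intends: the paper prints no proof of Theorem~\ref{3.3.11}, deferring to Matsuoka's Theorem~2, and your steps --- reducing via Theorem~\ref{NP} and Proposition~\ref{epp} to the parity of $I_k(\varphi)$ (legitimate since $k$ is odd), using \eqref{N2} together with $\tr(A^2)\equiv(\tr A)^2\equiv\tr A\pmod{2}$ for integral matrices and the odd minimal $R^{(2)}$-period $\alpha:=\alpha^{(2)}(\varphi)$ to get $a_{2m}=a_m$ and hence $a_{pm}=a_m$ when $p\equiv 2^i\pmod{\alpha}$, and then the two M\"{o}bius cancellations in your Cases 1 and 2 --- are exactly that argument (the sign discrepancy in $\tr(M_+\oplus-M_-)^k$ for even $k$ is immaterial mod $2$, as you implicitly use). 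The only caveat is the one you flag yourself, that Case 1 needs $k$ to have a prime factor: this is harmless, since $k=1$ can satisfy the hypotheses only when $\alpha=1$, and in that degenerate situation the assertion itself can fail (e.g.\ $\varphi(n)=2n$ on $\bbz$ has $R(\varphi^k)=2^k-1$ odd for all $k$, so $\alpha=1$ but $\NP_1(\varphi)=1$), so the restriction is a defect of the statement rather than of your proof.
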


\section{Heights of $\varphi$}

In this section, we study (homotopy) heights $\calH\calI(\varphi)=\calH(\varphi)$
of Reidemeister classes of endomorphisms $\varphi$ on poly-Bieberbach groups.
We wish to determine the set $\calH(\varphi)$ of all heights only from the knowledge of the sequence $\{R(\varphi^k)\}$.
Recalling that when all $R(\varphi^k)$ are finite, $R(\varphi^k)=\sum_{i=1}^{r(\varphi)}\rho_i\lambda_i^k$
and $\lambda(\varphi)=\max\{|\lambda_i|\mid i=1,\cdots,r(\varphi)\}$,
we define
$$
R^{|\lambda|}(\varphi^k)=\sum_{|\lambda_i|=|\lambda|}\rho_i\lambda_i^k,\quad
\tilde{R}^{|\lambda|}(\varphi^k)=\frac{1}{|\lambda|^k}R^{|\lambda|}(\varphi^k).
$$

\begin{Lemma}\label{3.2.47}
When all $R(\varphi^k)$ are finite, if $\lambda(\varphi)\ge1$, then we have
$$
\limsup_{k\to\infty}\frac{R(\varphi^k)}{\lambda(\varphi)^k}
=\limsup_{k\to\infty}|\tilde{R}^{\lambda(\varphi)}(\varphi^k)|.
$$
\end{Lemma}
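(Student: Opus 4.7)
The plan is to decompose the rational expression $R(\varphi^k)/\lambda(\varphi)^k$ according to the modulus of the roots $\lambda_i$ and argue that the ``small'' contributions are negligible in the limsup.

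First I would write, using the identity $R(\varphi^k)=\sum_{i=1}^{r(\varphi)}\rho_i\lambda_i^k$ recalled from Section~\ref{poly-B} above, the decomposition
$$
R(\varphi^k)=R^{\lambda(\varphi)}(\varphi^k)+\sum_{|\lambda_i|<\lambda(\varphi)}\rho_i\lambda_i^k.
$$
Since we are assuming $\lambda(\varphi)\ge1>0$, dividing by $\lambda(\varphi)^k$ yields
$$
\frac{R(\varphi^k)}{\lambda(\varphi)^k}=\tilde{R}^{\lambda(\varphi)}(\varphi^k)+\epsilon_k,\qquad
\epsilon_k:=\sum_{|\lambda_i|<\lambda(\varphi)}\rho_i\left(\frac{\lambda_i}{\lambda(\varphi)}\right)^k.
$$

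Second, I would observe that $|\lambda_i/\lambda(\varphi)|<1$ for every index $i$ appearing in $\epsilon_k$, so each summand converges to $0$ geometrically, and hence $\epsilon_k\to0$ as $k\to\infty$. On the other hand, $\tilde{R}^{\lambda(\varphi)}(\varphi^k)$ is bounded, since each term $\rho_i(\lambda_i/\lambda(\varphi))^k$ with $|\lambda_i|=\lambda(\varphi)$ has modulus $|\rho_i|$. Note also that $R(\varphi^k)$ is a non-negative integer (a cardinality), so the left-hand side $R(\varphi^k)/\lambda(\varphi)^k$ is a non-negative real and equals its absolute value.

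Third, I would apply the triangle inequality in the form
$$
\bigl|\,|\tilde{R}^{\lambda(\varphi)}(\varphi^k)+\epsilon_k|-|\tilde{R}^{\lambda(\varphi)}(\varphi^k)|\,\bigr|\le|\epsilon_k|\longrightarrow0.
$$
Adding a sequence that tends to $0$ inside an absolute value does not change the limsup, so
$$
\limsup_{k\to\infty}\bigl|\tilde{R}^{\lambda(\varphi)}(\varphi^k)+\epsilon_k\bigr|
=\limsup_{k\to\infty}\bigl|\tilde{R}^{\lambda(\varphi)}(\varphi^k)\bigr|.
$$
Combined with the fact that the left-hand side of this equality equals $\limsup_k R(\varphi^k)/\lambda(\varphi)^k$ (since that sequence is non-negative), this is the claim.

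There is no real obstacle here; the statement is essentially the standard principle that lower-order exponential terms are invisible to the normalized limsup. The only points requiring mild care are the hypothesis $\lambda(\varphi)\ge1$ (to guarantee division by $\lambda(\varphi)^k$ is legitimate and that $\tilde R^{\lambda(\varphi)}$ is bounded) and the observation that $R(\varphi^k)\ge0$, which lets us drop the absolute value on the left-hand side of the identity being proved.
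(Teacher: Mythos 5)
Your proposal is correct and follows essentially the same route as the paper: split $R(\varphi^k)=\sum_i\rho_i\lambda_i^k$ into the terms with $|\lambda_i|=\lambda(\varphi)$ and those with $|\lambda_i|<\lambda(\varphi)$, divide by $\lambda(\varphi)^k$, and observe the latter contribution tends to $0$ so it cannot affect the $\limsup$. You merely spell out details the paper leaves implicit (the triangle-inequality step and the remark that $R(\varphi^k)\ge0$ lets one drop the absolute value on the left), which is fine.
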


\begin{proof}
We have
\begin{align*}
\frac{R(\varphi^k)}{\lambda(\varphi)^k}=\tilde{R}^{\lambda(\varphi)}(f^k)
+\frac{1}{\lambda(\varphi)^k}\sum_{|\lambda_i|<\lambda(\varphi)} \rho_i\lambda_i^k.
\end{align*}
Since for $|\lambda_i|<\lambda(\varphi)$, $\lim\lambda_i^k/\lambda(\varphi)^k=0$,
it follows that the proof is completed.
\end{proof}

\begin{Thm}\label{3.2.48}
Let $\varphi:\Pi\to\Pi$ be an endomorphism on a poly-Bieberbach group
such that all $R(\varphi^k)$ are finite.
Suppose that the sequence {$R(\varphi^k)/\lambda(\varphi)^k$ is asymptotically periodic.}
Then there exist an integer $m>0$ and an infinite sequence $\{p_i\}$ of primes
such that $\{mp_i\}\subset\calH(\varphi)$.
\end{Thm}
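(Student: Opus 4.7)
The plan is to reduce the Reidemeister statement to the Nielsen statement Theorem~\ref{N_3.2.48} by using Theorem~\ref{av} to identify the two sequences, and then translate the conclusion about $\HPer(f)$ back into a statement about $\calH(\varphi)$ via the algebraic characterization coming from Proposition~\ref{epp}.

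First I would fix an affine map $(d,D)$ on $S$ that semi-conjugates $\varphi$, and let $f:\Pi\bs{S}\to\Pi\bs{S}$ be the induced self-map (the existence of both is guaranteed by the discussion in Section~\ref{poly-B}). Since all $R(\varphi^k)$ are assumed finite, Theorem~\ref{av} gives $R(\varphi^k)=N(f^k)$ for every $k\ge1$. In particular, $\lambda(\varphi)=\lambda(f)$ and the hypothesis that $\{R(\varphi^k)/\lambda(\varphi)^k\}$ is asymptotically periodic is exactly Case~(2) of Theorem~\ref{N-BaBo2.6} applied to $f$. Theorem~\ref{N_3.2.48} then produces an integer $m>0$ and an infinite sequence of primes $\{p_i\}$ with $\{mp_i\}\subset\HPer(f)$.

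Next I would transfer membership in $\HPer(f)$ to membership in $\calH(\varphi)$. From the chain of equivalences listed right before Theorem~\ref{Alg}, we have $k\in\HPer(f)\Longleftrightarrow A_k(f)\ne0$. On the other hand, Proposition~\ref{epp} asserts
\begin{align*}
\#\calI\calR(\varphi^k)=I_k(\varphi)=k\,A_k(\varphi),
\end{align*}
and since $R(\varphi^k)=N(f^k)$ we have $A_k(\varphi)=A_k(f)$. Hence $A_k(f)\ne0$ forces $\calI\calR(\varphi^k)\ne\emptyset$, which by definition means $k\in\calH(\varphi)$. Applying this to each $k=mp_i$ finishes the proof.

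The work is essentially bookkeeping: there is no new estimate to make once one sees that Theorem~\ref{av} identifies the Reidemeister and Nielsen sequences, and that Proposition~\ref{epp} together with the equivalences before Theorem~\ref{Alg} forces $\calH(\varphi)=\HPer(f)$. The only point to be careful about is to invoke Theorem~\ref{av} globally (all $k$ simultaneously), which is permitted here because finiteness of $R(\varphi^k)$ has been assumed for every $k$.
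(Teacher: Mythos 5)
Your argument is correct, but it follows a genuinely different route from the proof given in the paper. You reduce the statement to the Nielsen-theoretic Theorem~\ref{N_3.2.48}: after fixing the affine semi-conjugacy and the induced map $f$, finiteness of all $R(\varphi^k)$ and Theorem~\ref{av} give $R(\varphi^k)=N(f^k)$ for every $k$, hence $R_\varphi(z)=N_f(z)$ and $\lambda(\varphi)=\lambda(f)$, so the hypothesis is exactly Case~(2) for $f$ and Theorem~\ref{N_3.2.48} yields $\{mp_i\}\subset\HPer(f)$; you then pull this back with Theorem~\ref{Alg} ($k\in\HPer(f)\Leftrightarrow A_k(f)\ne0$), the equality $A_k(\varphi)=A_k(f)$, and Proposition~\ref{epp} ($\#\calI\calR(\varphi^k)=I_k(\varphi)$), giving $mp_i\in\calH(\varphi)$. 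The paper instead argues entirely on the Reidemeister side: it uses Lemma~\ref{3.2.47} and Theorem~\ref{BaBo2.6} to find $q$ and $m\le q$ with $\tilde{R}^{\lambda(\varphi)}(\varphi^m)\ne0$, sets $\psi=\varphi^m$, combines Proposition~\ref{BaBo2.8} with Dirichlet's theorem on primes of the form $1+\ell q$ to get $I_{p_i}(\psi)\ne0$, and then handles the one delicate step your route avoids altogether: converting an irreducible class of $\psi^{p_i}$, i.e.\ of $\varphi^{mp_i}$, into a height of $\varphi$ by computing its depth $d_i=m_ip_i$ with $m_i\mid m$ and extracting a constant subsequence $m_0$ (so the paper's final multiplier is only a divisor of its intermediate $m$, whereas yours is the $m$ from Theorem~\ref{N_3.2.48} itself). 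What your approach buys is brevity and the elimination of that depth/boosting argument; what it costs is reliance on the topological machinery behind Theorem~\ref{Alg} (in particular the Wecken-type Theorem~\ref{Je} and the equivalences preceding Theorem~\ref{Alg}) and on Theorem~\ref{N_3.2.48}, whose proof is not in this paper but in \cite{FL2}; the paper's direct proof stays purely algebraic in the Reidemeister framework and, as a byproduct, establishes the observation recorded in Corollary~\ref{3.2.50}. Both arguments are sound.
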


\begin{proof}
Since the sequence $R(\varphi^k)$ is unbounded, by Theorem~\ref{BaBo2.6},
there exists $q$ such that all $\lambda_i/|\lambda_i|$ with $|\lambda_i|=\lambda(\varphi)$
are roots of unity of degree $q$, and the sequence $\{\tilde{R}^{\lambda(\varphi)}(\varphi^k)\}$
is periodic and nonzero, because $\limsup_{k\to\infty}|\tilde{R}^{\lambda(\varphi)}(\varphi^k)|>0$
by Lemma~\ref{3.2.47}. Consequently, there exists $m$ with $1\le m\le q$
such that $\tilde{R}^{\lambda(\varphi)}(\varphi^m)\ne0$.

Let $\psi=\varphi^m$. Then $\lambda(\psi)=\lambda(\varphi^m)=\lambda(\varphi)^m\ge1$.
The periodicity $\tilde{R}^{\lambda(\varphi)}(\varphi^{m+\ell q})=\tilde{R}^{\lambda(\varphi)}(\varphi^m)$
implies that $\tilde{R}^{\lambda(\psi)}(\psi^{1+\ell q})=\tilde{R}^{\lambda(\psi)}(\psi)$ for all $\ell>0$.
By Lemma~\ref{3.2.47} or Theorem~\ref{BaBo2.6}, we can see that there exists $\gamma>0$
such that $R(\psi^{1+\ell q})\ge \gamma\lambda(\psi)^{1+\ell q}>0$ for all $\ell$ sufficiently large.
From Proposition~\ref{BaBo2.8} it follows that the Dold multiplicity $I_{1+\ell q}(\psi)$
satisfies $|I_{1+\ell q}(\psi)|\ge (\gamma/2)\lambda(\psi)^{1+\ell q}$ when $\ell$ is sufficiently large.

According to Dirichlet prime number theorem, since $(1,q)=1$,
there are infinitely many primes $p$ of the form $1+\ell q$.
Consider all primes $p_i$ satisfying $|I_{p_i}(\psi)|\ge (\gamma/2)\lambda(\psi)^{p_i}$.

By Proposition~\ref{epp}, $\# \calI\calR(\psi^{p_i})=I_{p_i}(\psi)>0$,
each $p_i$ is the height of some (essential) Reidemeister class $[\alpha]^{p_i}\in\calR[\psi^{p_i}]$.
That is, $[\alpha]^{p_i}$ is an irreducible Reidemeister class of $\psi^{p_i}$.
Consider the Reidemeister class $[\alpha]^{mp_i}$ determined by $\alpha$ of $\varphi^{mp_i}$.
Let $d_i$ be the depth of the Reidemeister class $[\alpha]^{mp_i}\in\calR[\varphi^{mp_i}]$.
Then $d_i= m_ip_i$ for some $m_i\mid m$ and so there is an irreducible Reidemeister class
$[\beta]^{d_i}\in\calR[\varphi^{d_i}]$ which is boosted to $[\alpha]^{mp_i}$.
This means that $d_i$ is the height of $[\beta]^{d_i}$.
Choose a subsequence $\{m_{i_k}\}$ of the sequence $\{m_i\}$ bounded by $m$ which is constant,
say $m_0$. Consequently, the infinite sequence $\{m_0p_{i_k}\}$ consists
of heights of $\varphi$, or $\{m_0p_i\}\subset\calH(\varphi)$.
\end{proof}

{In the proof of Theorem~\ref{3.2.48}, we have shown the following,
which proves that the algebraic period is a (homotopy) height when it is a prime number.}
\begin{Cor}\label{3.2.50}
Let $\varphi:\Pi\to\Pi$ be an endomorphism on a poly-Bieberbach group
such that all $R(\varphi^k)$ are finite. For all primes $p$,
if $A_p(\varphi)\ne0$ then {$p\in\calH(\varphi)$}.
\end{Cor}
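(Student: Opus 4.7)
The plan is to observe that this corollary is essentially a one-line consequence of what has already been assembled, specifically Proposition~\ref{epp} together with the definitions of height and of $\calH(\varphi)$. No delicate estimate is needed here; the work of the proof of Theorem~\ref{3.2.48} is precisely what has made this corollary immediate.

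Concretely, I would proceed as follows. Since $p$ is prime, the algebraic multiplicity reduces to
\[
A_p(\varphi)=\frac{1}{p}\sum_{d\mid p}\mu\!\left(\frac{p}{d}\right)R(\varphi^d)
=\frac{R(\varphi^p)-R(\varphi)}{p},
\]
so the Dold multiplicity is $I_p(\varphi)=pA_p(\varphi)$. Apply Proposition~\ref{epp} in the case $k=p$ to obtain
\[
\#\calI\calR(\varphi^p)=I_p(\varphi)=pA_p(\varphi).
\]
By hypothesis $A_p(\varphi)\ne0$, hence $\calI\calR(\varphi^p)\ne\emptyset$; that is, there exists an irreducible Reidemeister class $[\alpha]^p\in\calR[\varphi^p]$. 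By the definition given in Section~\ref{prelim}, an irreducible class in $\calR[\varphi^p]$ is precisely one of height $p$, so by the definition of $\calH(\varphi)$ we conclude $p\in\calH(\varphi)$.

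To upgrade this to a statement about \emph{homotopy} heights, I would invoke the identity $\calH\calI(\varphi)=\calH(\varphi)$ established at the end of Section~\ref{prelim}, which follows from the fact that the boosting functions $\iota_{m,n}$ intertwine with the right multiplications implementing the conjugacy bijections between $\calR[\varphi^n]$ and $\calR[(\tau_\beta\varphi)^n]$; this makes irreducibility, and hence height, a conjugacy invariant.

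There is no genuine obstacle here: the only thing that could go wrong is confusing ``height $p$'' with ``depth at most $p$''. The key point to emphasize is that when the index is prime, every nontrivial Reidemeister class in $\calR[\varphi^p]$ with nonzero Möbius contribution is automatically irreducible, because the only proper divisor of $p$ is $1$, and the Möbius inversion keeps track precisely of those classes not boosted from $\calR[\varphi]$. This is why primality is used and why the corollary follows at once.
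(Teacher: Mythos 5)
Your proof is correct and is essentially the paper's own argument: the corollary is read off from the proof of Theorem~\ref{3.2.48}, where precisely your chain $A_p(\varphi)\ne0\Rightarrow I_p(\varphi)=\#\calI\calR(\varphi^p)\ne0$ (Proposition~\ref{epp}), hence an irreducible class of height $p$ exists and $p\in\calH(\varphi)$, is used. One small caveat: your closing remark overstates the role of primality --- since Proposition~\ref{epp} identifies $I_k(\varphi)$ with $\#\calI\calR(\varphi^k)$ for every $k$, the same argument gives $\calA(\varphi)\subset\calH(\varphi)$ in general (as the paper itself notes later), so primality is not actually needed for this step.
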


\begin{Cor}\label{3.2.51}
Let $\varphi:\Pi\to\Pi$ be an endomorphism on a poly-Bieberbach group
such that all $R(\varphi^k)$ are finite.
If the sequence $\{R(\varphi^k)\}$ is strictly monotone increasing,
then there exists $N$ such that the set $\calH(\varphi)$ contains
all primes larger than $N$.
\end{Cor}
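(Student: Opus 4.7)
The plan is to combine Corollary~\ref{3.2.50} (which says $A_p(\varphi)\ne 0$ implies $p\in\calH(\varphi)$ when $p$ is prime) with the argument used to prove Corollary~\ref{N-cofinite} on the topological side, adapted verbatim to the Reidemeister setting. Since every prime is trivially not a proper multiple of a smaller positive integer, it suffices to show that the algebraic periods $\calA(\varphi)=\{k:A_k(\varphi)\ne 0\}$ are eventually cofinite; then $\calH(\varphi)$ must contain all sufficiently large primes.

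First I would verify that strict monotonicity of $\{R(\varphi^k)\}$ forces $\lambda(\varphi)>1$. Indeed, by Theorem~\ref{Radius1}, $\lambda(\varphi)$ is either $0$ or $\ge 1$; if $\lambda(\varphi)=0$ then $R_\varphi(z)\equiv 1$ and all $R(\varphi^k)=0$, contradicting monotonicity. If $\lambda(\varphi)=1$ then by the proof of Theorem~\ref{Radius1} the sequence $R(\varphi^k)=\sum_{|\lambda_j|=1}\rho_j\lambda_j^k$ is a bounded sequence of integers, again ruling out strict monotonicity. So $\lambda(\varphi)>1$.

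Next I would invoke Theorem~\ref{BaBo2.7} to obtain $\gamma>0$ and $N$ such that for every $m>N$ some $\ell=\ell(m)\in\{0,1,\dots,n(\varphi)-1\}$ satisfies $R(\varphi^{m+\ell})/\lambda(\varphi)^{m+\ell}>\gamma$. Using strict monotonicity (so $R(\varphi^k)\ge R(\varphi^{k-\ell})$ whenever $\ell\ge 0$), for every $k>N+n(\varphi)$ I can write, with $m=k-\ell$,
\begin{equation*}
\frac{R(\varphi^k)}{\lambda(\varphi)^k}\ge \frac{R(\varphi^{k-\ell})}{\lambda(\varphi)^{k-\ell}\lambda(\varphi)^\ell}\ge\frac{\gamma}{\lambda(\varphi)^{n(\varphi)-1}}=:\epsilon.
\end{equation*}
Applying Proposition~\ref{BaBo2.8} with this $\epsilon$ then yields $|I_k(\varphi)|\ge(\epsilon/2)\lambda(\varphi)^k>0$ for all $k$ sufficiently large. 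Since $I_k(\varphi)=kA_k(\varphi)$, we conclude $A_k(\varphi)\ne 0$ for all $k\ge N'$ for some $N'$, i.e., the algebraic period set $\calA(\varphi)$ is cofinite.

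Finally, applying Corollary~\ref{3.2.50} to every prime $p\ge N'$ gives $p\in\calH(\varphi)$, which is the desired conclusion. The only conceptual subtlety (hardly an obstacle, but the step that requires care) is checking the first paragraph: that strict monotonicity of the integer sequence $\{R(\varphi^k)\}$ truly forces $\lambda(\varphi)>1$ rather than $\lambda(\varphi)=1$, which I handle using the structural formula $R(\varphi^k)=\sum_i\rho_i\lambda_i^k$ from \eqref{N_k} together with Theorem~\ref{Radius1}. After that, the argument is a direct transcription of the proof of Corollary~\ref{N-cofinite} into Reidemeister language, with Corollary~\ref{3.2.50} playing the role of Theorem~\ref{Alg}.
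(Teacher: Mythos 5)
Your proposal is correct and follows essentially the same route as the paper's proof: deduce $\lambda(\varphi)>1$ from strict monotonicity, use Theorem~\ref{BaBo2.7} plus monotonicity to get a uniform lower bound on $R(\varphi^k)/\lambda(\varphi)^k$, feed that into Proposition~\ref{BaBo2.8} to conclude $I_k(\varphi)\ne0$ (hence $A_k(\varphi)\ne0$) for all large $k$, and finish with Corollary~\ref{3.2.50}. The only difference is cosmetic: you spell out why $\lambda(\varphi)>1$ (which the paper asserts without comment) and bound $\ell$ by $n(\varphi)-1$ rather than $r(\varphi)$, both of which are fine.
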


\begin{proof}
{By the assumption, we have $\lambda(\varphi)>1$.}
Thus by Theorem~\ref{BaBo2.7}, there exist $\gamma>0$ and $N$ such that
if $k>N$ then there exists $\ell=\ell(k)<r(\varphi)$ such that $R(\varphi^{k-\ell})/\lambda(\varphi)^{k-\ell}>\gamma$.
Then for all $k>N$, the monotonicity gives
\begin{align*}
\frac{R(\varphi^k)}{\lambda(\varphi)^k}\ge \frac{R(\varphi^{k-\ell})}
{\lambda(\varphi)^k}=\frac{R(\varphi^{k-\ell})}
{\lambda(\varphi)^{k-\ell}\lambda(\varphi)^\ell}
\ge\frac{\gamma}{\lambda(\varphi)^\ell}\ge\frac{\gamma}{\lambda(\varphi)^{r(\varphi)}}.
\end{align*}
Applying Proposition~\ref{BaBo2.8} with $\epsilon=\gamma/\lambda(\varphi)^{r(\varphi)}$,
we see that $I_k(\varphi)\ne0$ and so $A_k(\varphi)\ne0$ for all $k$ sufficiently large.
Now our assertion follows from Corollary~\ref{3.2.50}.
\end{proof}

\begin{Example}
There are examples of groups and endomorphisms satisfying the conditions of the above Corollary.
The simplest one is the endomorphism $\varphi:\bbz\to\bbz$ such that $\varphi(1)=d$.
Then $\varphi^k(1)=d^k$ and so $R(\varphi^k)=|1-d^k|$ for all $k>0$.
When $d\ge2$, it is easy to see that all $R(\varphi^k)$ are finite and the sequence $\{R(\varphi^k)\}$ is strictly increasing.

By a direct computation, we can show that
$$
\calH(\varphi)=\begin{cases}
\{1\}&\text{if $d=0$ or $-1$}\\
\emptyset&\text{if $d=1$}\\
\bbn-\{2\}&\text{if $d=-2$}\\
\bbn&\text{if $d\ge2$ or $d\le-3$.}
\end{cases}
$$
In fact, when $d=1$, all the Reidemeister classes are inessential and hence by definition $\calH(\varphi)=\emptyset$.
For another instance, consider the case $d=-2$.
For any $k\ge1$, $\varphi^k(1)=(-2)^k$ and so the Reidemeister class
$[n]^k\in\calR[\varphi^k]$ is
$$
[n]^k=\{m+n-(-2)^km\mid m\in\bbz\}=n+(1-(-2)^k)\bbz.
$$
Since $\iota_{1,2}([n]^1)=[n+\varphi(n)]^2=[n-2n]^2=[-n]^2$, it follows that $2\notin\calH(\varphi)$.
Next, we remark that
\begin{align*}
\iota_{k,\ell}([n]^k)&=[n+\varphi^k(n)+\cdots+\varphi^{\ell-k}(n)]^\ell\\
&=[(1+(-2)^k+\cdots+(-2)^{\ell-k})n]^\ell \\
&=\left[\frac{1-(-2)^\ell}{1-(-2)^k}n\right]^\ell.
\end{align*}
For $0\le n<|1-(-2)^k|$, when $\ell\ne2$ we see that
$$
\left|\frac{1-(-2)^\ell}{1-(-2)^k}\right|\ne1,\quad
0\le \left|\frac{1-(-2)^\ell}{1-(-2)^k}\right|n<|1-(-2)^\ell|.
$$
This implies that if $\ell\ne2$ then $\ell\in \calH(\varphi)$.
Consequently, $\calH(\varphi)=\bbn-\{2\}$.
Now the remaining cases can be treated in a similar way and we omit a detailed computation.

\end{Example}

An endomorphism $\varphi:\Pi\to\Pi$ is {\bf essentially reducible}
if any Reidemeister class of $\varphi^k$ being boosted
to an essential Reidemeister of $\varphi^{kn}$ is essential,
for any positive integers $k$ and $n$.
The group $\Pi$ is {\bf essentially reducible} if every endomorphism on $\Pi$ is essentially reducible.

\begin{Lemma}[{\cite[Lemma~6.7]{FL2}}]\label{er}
Every poly-Bieberbach group is essentially reducible.
\end{Lemma}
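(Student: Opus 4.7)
The plan is to exploit the affine-map structure on a poly-Bieberbach group $\Pi$ developed in Section~\ref{poly-B}. Let $\varphi:\Pi\to\Pi$ be any endomorphism with affine lift $(d,D)$, inducing $f:\Pi\bs{S}\to\Pi\bs{S}$. Fix $k,n\geq 1$ and $[\alpha]^k\in\calR[\varphi^k]$ with $\iota_{k,kn}([\alpha]^k)=[\gamma]^{kn}$ essential, where $\gamma=\alpha\varphi^k(\alpha)\cdots\varphi^{(n-1)k}(\alpha)$; the goal is to conclude that $[\alpha]^k$ is essential.

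First I would compare the two affine self-maps $\alpha\tilde{f}^k$ and $\gamma\tilde{f}^{kn}$ of $S$. Writing $\alpha=(t_\alpha,A_\alpha)\in S\rtimes\mathrm{Aut}(S)$ and passing to differentials at the identity of $S$, set $B:=A_{\alpha,*}D_*^k$ on $\fraks$; this is the differential of the linear part of $\alpha\tilde{f}^k$. Iterating the defining relation $\tilde{f}\alpha=\varphi(\alpha)\tilde{f}$ — exactly the identity used in Section~\ref{prelim} to build the boosting function $\iota_{k,kn}$ — gives $(\alpha\tilde{f}^k)^n=\gamma\tilde{f}^{kn}$, so that the differential of the linear part of $\gamma\tilde{f}^{kn}$ is $B^n$.

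Next I would invoke the standard affine-map dichotomy on $S$ (a simply connected, connected solvable Lie group of type $\R$): for an affine self-map $g$, the condition $\det(I-g_*)\neq 0$ is equivalent to $g$ having a unique fixed point in $S$, and in that case the corresponding fixed point class on $\Pi\bs{S}$ is a single point of index $\sgn\det(I-g_*)$, hence essential; otherwise the fixed point set is either empty or positive-dimensional and, by Proposition~\ref{ess}, the corresponding class on $\Pi\bs{S}$ is inessential. Applied to $\gamma\tilde{f}^{kn}$, essentialness of $[\gamma]^{kn}$ forces $\det(I-B^n)\neq 0$. A one-line linear-algebra remark — any eigenvalue $1$ of $B$ persists as an eigenvalue $1$ of $B^n$ — then yields $\det(I-B)\neq 0$. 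Applying the dichotomy to $\alpha\tilde{f}^k$ produces a single-point fixed point class of index $\sgn\det(I-B)\neq 0$, so $[\alpha]^k$ is essential, as required.

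The main subtlety is not the final linear-algebra step but ensuring the essentialness dichotomy above is valid uniformly, in particular even when $R(\varphi^k)=\infty$ and some $\det(I-A_*D_*^k)$ vanishes in the averaging formula of Theorem~\ref{av}. This uniformity is exactly the content of Proposition~\ref{ess}: every essential fixed point class of an affine map on an infra-solvmanifold of type $\R$ is a singleton whose index is $\sgn\det(I-\text{differential of its linear part})$, with no finiteness hypothesis on $R(\varphi^k)$ needed. Since $\varphi$, $k$, $n$ and $[\alpha]^k$ were arbitrary, $\Pi$ is essentially reducible.
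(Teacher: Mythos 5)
Your proof is correct and follows essentially the same route as the paper's source for this lemma (the paper itself only quotes \cite[Lemma~6.7]{FL2}): pass to the affine lift, observe via $\tilde{f}^k\alpha=\varphi^k(\alpha)\tilde{f}^k$ that the linear part of $\gamma\tilde{f}^{kn}$ is the $n$-th power of that of $\alpha\tilde{f}^k$, and combine persistence of the eigenvalue $1$ with the fact that a periodic point class of an affine map on an infra-solvmanifold of type $\R$ is essential precisely when $\det(I-A_{\alpha*}D_*^k)\ne0$. The only point to polish is your ``dichotomy'': use it one-sidedly, namely essential $\Rightarrow$ determinant nonzero (from the singleton-with-index $\sgn\det(I-df_x)$ description behind Proposition~\ref{ess}) and determinant nonzero $\Rightarrow$ essential (existence and uniqueness of fixed points of affine maps whose linear part has no eigenvalue $1$ on a type $\R$ group, from the averaging-formula machinery), since the converse ``unique fixed point $\Rightarrow$ determinant nonzero'' is neither needed nor literally true.
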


This means that for any $n$, if $[\alpha]^n$ is essential
and if $\iota_{m,n}([\beta]^m)=[\alpha]^n$ then $[\beta]^m$ is essential.

\begin{Lemma}[{\cite[Proposition~2.2]{ABLSS}}]\label{Hal}
Let $\varphi:\Pi\to\Pi$ be an endomorphism such that all $R(\varphi^k)$ are finite. If
$$
\sum_{\frac{m}{k}:\text{ {\rm prime}}} R(\varphi^k)< R(\varphi^m),
$$
then $\varphi$ has a periodic Reidemeister class with height $m$, i.e., $m\in\calH(\varphi)$.
\end{Lemma}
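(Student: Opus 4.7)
The plan is to argue by contradiction: assuming $m\notin\calH(\varphi)$, I will cover $\calR[\varphi^m]$ by images of the boosting functions $\iota_{m/p,m}$ as $p$ ranges over the prime divisors of $m$, and then count.

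Concretely, suppose that no class in $\calR[\varphi^m]$ has height $m$. Since all $R(\varphi^k)$ are finite, every Reidemeister class of every $\varphi^n$ is essential (as noted in Section~\ref{poly-B}), so each $[\alpha]^m\in\calR[\varphi^m]$ has a well-defined depth $d([\alpha]^m)<m$, and there exists some $[\beta]^{d}\in\calR[\varphi^d]$ with $\iota_{d,m}([\beta]^d)=[\alpha]^m$. Pick any prime $p$ dividing $m/d$; then $d\mid m/p$, so I will use the transitivity $\iota_{m/p,m}\circ\iota_{d,m/p}=\iota_{d,m}$ of the boosting functions to conclude that $[\alpha]^m$ lies in the image of $\iota_{m/p,m}$. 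This transitivity is a straightforward calculation from the formula $\iota_{k,\ell}([\gamma]^k)=[\gamma\varphi^k(\gamma)\cdots\varphi^{\ell-k}(\gamma)]^\ell$, by regrouping the product $\prod_{i=0}^{\ell/k-1}\varphi^{ik}(\gamma)$ into blocks corresponding to the intermediate level.

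Having established this, I obtain the covering
\[
\calR[\varphi^m]=\bigcup_{\substack{p\mid m\\ p\text{ prime}}}\im\bigl(\iota_{m/p,m}\bigr),
\]
from which
\[
R(\varphi^m)\le\sum_{\substack{p\mid m\\ p\text{ prime}}}\#\im\bigl(\iota_{m/p,m}\bigr)
\le\sum_{\substack{p\mid m\\ p\text{ prime}}}R(\varphi^{m/p})
=\sum_{m/k\text{ prime}}R(\varphi^k),
\]
which contradicts the hypothesis. Hence some $[\alpha]^m\in\calR[\varphi^m]$ has height $m$, so $m\in\calH(\varphi)$.

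The only non-routine point is verifying the transitivity of the boosting functions and confirming that "reducible to some proper divisor of $m$" forces "reducible to $m/p$ for some prime $p\mid m$"; both are short. Essential reducibility (Lemma~\ref{er}) is not strictly needed here because finiteness of $R(\varphi^k)$ already makes every class essential, but if one preferred to state the argument without the Reidemeister/Nielsen identification, it would be the tool guaranteeing that the classes covered by the $\iota_{m/p,m}$ are exactly the non-irreducible (hence inessential-in-the-$\calI\calR$-sense) ones.
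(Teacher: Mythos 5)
Your proof is correct and is essentially the paper's own argument: assume no class of $\varphi^m$ is irreducible, reduce each class to some level $m/p$ with $p$ prime (the transitivity of the $\iota_{k,\ell}$ that you verify explicitly is left implicit in the paper), and derive $R(\varphi^m)\le\sum_{m/k\,\text{prime}}R(\varphi^k)$, contradicting the hypothesis. The remarks about essentiality are harmless but superfluous, since height, depth and reducibility are defined purely combinatorially and the counting never uses essentiality (indeed the lemma is stated for an arbitrary group, where ``finite $R(\varphi^k)$ implies all classes essential'' is not available).
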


\begin{proof}
Let $r=R(\varphi^m)$ and let $[\alpha_1]^m,\cdots,[\alpha_r]^m$ be the Reidemeister classes of $\varphi^m$.
If some $[\alpha_j]^m$ is irreducible, then we are done. So assume no $[\alpha_j]^n$ is irreducible.
Then, for each $j$, there is a $k_j$ so that $m/k_j$ is prime
and $[\alpha_j]^m$ is reducible to $[\beta_j]^{k_j}\in\calR[\varphi^{k_j}]$.
But this shows that $R(\varphi^m)\le\sum_{\frac{m}{k}: \text{prime}}R(\varphi^k)$, a contradiction.
\end{proof}

We can not only extend but also strengthen Corollary~\ref{3.2.51} as follows:
\begin{Prop}\label{prime power}
Let $\varphi:\Pi\to\Pi$ be an endomorphism on a poly-Bieberbach group
such that all $R(\varphi^k)$ are finite.
Suppose that the sequence $\{R(\varphi^k)\}$ is strictly monotone increasing. Then:
\begin{enumerate}
\item[$(1)$] All primes belong to $\calH(\varphi)$.
\item[$(2)$] There exists $N$ such that if $p$ is a prime $>N$
then $\{p^n\mid n\in\bbn\}\subset\calH(\varphi)$.
\end{enumerate}
\end{Prop}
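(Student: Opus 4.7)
The plan is to apply Lemma~\ref{Hal} directly, exploiting the simplicity of the divisor lattice of a prime power. For any prime $p$ and any integer $n\ge 1$, set $m=p^n$. The divisors $k\mid m$ satisfying the condition ``$m/k$ is prime'' must have $m/k=p$, so $k=p^{n-1}$ is the unique such divisor. Therefore the hypothesis of Lemma~\ref{Hal}, namely
$$
\sum_{m/k:\,\text{prime}} R(\varphi^k) < R(\varphi^m),
$$
collapses to the single inequality $R(\varphi^{p^{n-1}})<R(\varphi^{p^n})$.

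Next, I would invoke the strict monotonicity of $\{R(\varphi^k)\}$, which is the hypothesis of the Proposition, to conclude that $R(\varphi^{p^{n-1}})<R(\varphi^{p^n})$ for every prime $p$ and every $n\ge1$. Lemma~\ref{Hal} then produces an irreducible Reidemeister class in $\calR[\varphi^{p^n}]$, which is the statement $p^n\in\calH(\varphi)$. Taking $n=1$ yields part~(1), and keeping $n$ arbitrary yields part~(2).

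Observe that this argument gives the slightly stronger conclusion $\{p^n\mid n\in\bbn\}\subset\calH(\varphi)$ for \emph{every} prime $p$, so the bound $N$ in (2) is not actually needed; one can take $N=1$. The proof is essentially one line once Lemma~\ref{Hal} is in place, and there is no real obstacle to overcome: the essentiality issue that would usually complicate height arguments is already handled in the poly-Bieberbach setting with finite Reidemeister numbers, since by Theorem~\ref{av} and Proposition~\ref{ess} every Reidemeister class of $\varphi^n$ is essential, so ``height'' in the sense of Lemma~\ref{Hal} automatically corresponds to an essential periodic class.
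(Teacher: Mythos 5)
Your argument is correct, and for part (2) it is genuinely leaner than the paper's. Both you and the paper rest on Lemma~\ref{Hal}, and your part (1) coincides with the paper's, which notes that for a prime $p$ the relevant sum is just $R(\varphi)$ so that $R(\varphi^p)-R(\varphi)=I_p(\varphi)>0$ by strict monotonicity. The difference is in (2): the paper obtains the positivity of $I_{p^n}(\varphi)$ from the proof of Corollary~\ref{3.2.51}, i.e.\ from the asymptotic machinery (Theorem~\ref{BaBo2.7} and Proposition~\ref{BaBo2.8} applied with $\lambda(\varphi)>1$), which is exactly where the threshold $N$ and the restriction to primes $p>N$ enter; you instead observe that for $m=p^n$ the sum in Lemma~\ref{Hal} collapses to the single term $R(\varphi^{p^{n-1}})$, so strict monotonicity alone verifies the hypothesis, with no asymptotics needed. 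Equivalently, since $R(\varphi^{p^n})-R(\varphi^{p^{n-1}})=I_{p^n}(\varphi)$, monotonicity gives $I_{p^n}(\varphi)>0$ for every prime $p$ and every $n$, and Proposition~\ref{epp} (or Lemma~\ref{Hal}) then puts $p^n$ in $\calH(\varphi)$. So your stronger conclusion --- all prime powers of all primes are heights, i.e.\ one may take $N=1$ --- is indeed valid under the hypothesis as stated; the paper's formulation with $N$ only becomes necessary if one weakens the hypothesis to eventual strict monotonicity (as in the application to expanding endomorphisms), where your global argument would apply only beyond the threshold. Your closing remark on essentiality is accurate but not strictly needed, since $\calH(\varphi)$ is defined through irreducibility alone.
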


\begin{proof}
Observe that for any prime $p$
$$
R(\varphi^p)-\sum_{\frac{p}{k}:\text{ {\rm prime}}} R(\varphi^k)
=R(\varphi^p)-R(\varphi)=I_p(\varphi).
$$
The strict monotonicity implies $A_p(\varphi)=pI_p(\varphi)>0$,
and hence $p\in\calH(\varphi)$, which proves (1).

Under the same assumption, we have shown in the proof of Corollary~\ref{3.2.51}
that there exists $N$ such that $k>N\Rightarrow I_k(\varphi)>0$.
Let $p$ be a prime $>N$ and $n\in\bbn$. Then
\begin{align*}
&R(\varphi^{p^n})-\sum_{\frac{p^n}{k}:\text{ {\rm prime}}} R(\varphi^k)
=\sum_{i=0}^n I_{p^{i}}(\varphi)-R(\varphi^{p^{n-1}})=I_{p^n}(\varphi)>0.
\end{align*}
By Lemma~\ref{Hal}, we have $p^n\in\calH(\varphi)$, which proves (2).
\end{proof}

In Remark~\ref{density}, we made a statement about the lower density $\DA(\varphi)$
of the set of algebraic periods $\calA(\varphi)=\{m\in\bbn\mid A_m(\varphi)\ne0\}$.
We can consider as well the lower density of the set $\calH(\varphi)$ of heights,
see also \cite{JLZ}, \cite{Zhao et al} and \cite{FL2}:
\begin{align*}
&\DH(\varphi)=\liminf_{k\to\infty}\frac{\#(\calH(\varphi)\cap[1,k])}{k}.
\end{align*}
Since $I_k(\varphi)=\#\calI\calR(\varphi^k)$ by Proposition~\ref{epp},
it follows that $\calA(\varphi)\subset \calH\calI(\varphi)=\calH(\varphi)$.
Hence we have $\DA(\varphi)\le\DH(\varphi)$.

\begin{Cor}\label{cofinite}
Let $\varphi:\Pi\to\Pi$ be an endomorphism on a poly-Bieberbach group
such that all $R(\varphi^k)$ are finite.
Suppose that the sequence $\{R(\varphi^k)\}$ is strictly monotone increasing.
Then $\calH(\varphi)$ is cofinite and $\DA(\varphi)=\DH(\varphi)=1$.
\end{Cor}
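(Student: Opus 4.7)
The plan is to piggyback directly on the argument already written out in the proof of Corollary~\ref{3.2.51}, which establishes that under strict monotonicity the algebraic multiplicities $A_k(\varphi)$ are eventually nonzero. From there, cofiniteness of $\calA(\varphi)$ gives cofiniteness of $\calH(\varphi)$ via the inclusion $\calA(\varphi)\subset\calH(\varphi)$ noted just above the statement, and both densities collapse to $1$.

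More precisely, I would first observe that strict monotonicity of $\{R(\varphi^k)\}$ forces the sequence to be unbounded, so $\lambda(\varphi)>1$ (if $\lambda(\varphi)=0$ then $R_\varphi\equiv1$ and all $R(\varphi^k)=0$, and if $\lambda(\varphi)=1$ then $\{R(\varphi^k)\}$ is bounded). Applying Theorem~\ref{BaBo2.7}, there exist $\gamma>0$ and $N_1$ so that for each $k>N_1$ some $\ell=\ell(k)\in\{0,\dots,n(\varphi)-1\}$ satisfies $R(\varphi^{k-\ell})/\lambda(\varphi)^{k-\ell}>\gamma$; using monotonicity to push the numerator up to index $k$ and dividing by $\lambda(\varphi)^{\ell}$ yields
\[
\frac{R(\varphi^k)}{\lambda(\varphi)^k}\ \ge\ \frac{\gamma}{\lambda(\varphi)^{r(\varphi)}}
\]
for all $k>N_1$, exactly as in the proof of Corollary~\ref{3.2.51}.

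Next, with $\epsilon=\gamma/\lambda(\varphi)^{r(\varphi)}$, Proposition~\ref{BaBo2.8} gives an $N\ge N_1$ such that $|I_k(\varphi)|\ge (\epsilon/2)\lambda(\varphi)^k>0$ for all $k>N$. Since $I_k(\varphi)=k A_k(\varphi)$, we have $A_k(\varphi)\ne0$ for all $k>N$, i.e.\ $\{k\in\bbn\mid k>N\}\subset\calA(\varphi)$. In particular $\calA(\varphi)$ is cofinite, so $\DA(\varphi)=1$. Combining the inclusion $\calA(\varphi)\subset\calH\calI(\varphi)=\calH(\varphi)$ (from Proposition~\ref{epp}, which identifies $I_k(\varphi)$ with $\#\calI\calR(\varphi^k)$) gives $\{k>N\}\subset\calH(\varphi)$, so $\calH(\varphi)$ is cofinite as well, and $\DH(\varphi)=1$ follows from $\DA(\varphi)\le\DH(\varphi)\le1$.

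There is really no obstacle here: the main content was already done in Corollary~\ref{3.2.51}, and what remains is the short observation that "eventually nonzero" is the same as "cofinite", together with the trivial density computation. The only place to be slightly careful is the initial reduction to $\lambda(\varphi)>1$, since the hypotheses of Theorem~\ref{BaBo2.7} and Proposition~\ref{BaBo2.8} require this; but strict monotonicity makes $\{R(\varphi^k)\}$ unbounded and rules out $\lambda(\varphi)\le1$ immediately via Theorem~\ref{BaBo2.6}.
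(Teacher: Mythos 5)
Your proposal is correct and follows essentially the same route as the paper: the paper's proof simply cites the argument of Corollary~\ref{3.2.51} to get $I_k(\varphi)\ne0$ for all large $k$, then invokes Proposition~\ref{epp} and the inclusion $\calA(\varphi)\subset\calH(\varphi)$, exactly as you do (you merely write out the Theorem~\ref{BaBo2.7}/Proposition~\ref{BaBo2.8} step inline instead of referring back to it). The explicit reduction to $\lambda(\varphi)>1$ and the closing density observation $\DA(\varphi)\le\DH(\varphi)\le1$ match the paper's intent.
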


\begin{proof}
Under the same assumption, we have shown in the proof of Corollary~\ref{3.2.51}
that there exists $N$ such that if $k>N$ then $I_k(\varphi)>0$.
This means $\calI\calR(\varphi^k)$ is nonempty by Proposition~\ref{epp}
and hence $k\in\calH(\varphi)$.
\end{proof}

Let $\varphi:\Pi\to\Pi$ be an endomorphism on a poly-Bieberbach group $\Pi$ of $S$
such that all $R(\varphi^k)$ are finite.
When $\varphi$ is the semi-conjugate by an affine map $(d,D)$ on $S$,
we say that $\varphi$ is {\bf expanding} if all the eigenvalues of $D_*$ have modulus $>1$.

Now we can prove the main result of \cite{LZ-jmsj}.
\begin{Cor}[{\cite[Theorem~4.6]{LL-JGP}, \cite[Theorem~3.2]{LZ-jmsj}}]
Let $\varphi$ be an expanding endomorphism on an almost Bieberbach group.
Then $\HPer(\varphi)$ is cofinite.
\end{Cor}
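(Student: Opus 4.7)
The plan is to reduce this corollary to Corollary~\ref{cofinite}, so the task breaks into two parts: (a) verify that the ambient setting of Corollary~\ref{cofinite} applies, i.e.\ all $R(\varphi^k)$ are finite, and (b) prove that the sequence $\{R(\varphi^k)\}$ is strictly monotone increasing. The identification $\HPer(\varphi)=\calH\calI(\varphi)=\calH(\varphi)$, established in Section~\ref{prelim} and Section~\ref{poly-B}, then delivers the conclusion.

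For (a), I would first recall that an almost Bieberbach group is precisely a poly-Bieberbach group whose ambient Lie group $S$ is nilpotent, so the whole machinery of Section~\ref{poly-B} applies. By \cite[Theorem~2.2]{LL-Nagoya} the endomorphism $\varphi$ is semi-conjugated by some affine map $(d,D)$ on $S$, and ``expanding'' means every eigenvalue of $D_\ast$ has modulus strictly greater than $1$. A short eigenvalue argument, using that the holonomy $\Phi$ is finite (so each $A_\ast$ is conjugate to an orthogonal matrix and in particular has spectral radius $1$), shows that every eigenvalue of $A_\ast D_\ast^k$ has modulus bounded below by a positive power of the smallest modulus eigenvalue of $D_\ast$; in particular no eigenvalue equals $1$. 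Hence $\sigma(\det(I-A_\ast D_\ast^k))=|\det(I-A_\ast D_\ast^k)|$ in the averaging formula of Theorem~\ref{av}, so every $R(\varphi^k)$ is finite and moreover $R(\varphi^k)=N(f^k)$, where $f$ is the map on $\Pi\bs S$ induced by $(d,D)$.

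For (b), I would argue that for each fixed $A\in\Phi$ the quantity $|\det(I-A_\ast D_\ast^k)|$ is strictly increasing in $k$ once $k$ is large enough: each eigenvalue $\mu$ of $A_\ast D_\ast^k$ has modulus growing at least like $\lambda^k$ for some $\lambda>1$, so $|1-\mu|\sim|\mu|$ increases strictly, and the product $\prod|1-\mu|$ inherits strict increase. The cleanest route is to invoke the already-known fact, used in the corresponding Nielsen corollary of \cite{LL-JGP,LZ-jmsj}, that $\{N(f^k)\}$ is strictly increasing whenever $f$ is an expanding map on an infra-nilmanifold; combined with $R(\varphi^k)=N(f^k)$ from (a), this transfers strict monotonicity to the Reidemeister sequence.

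With (a) and (b) in hand, Corollary~\ref{cofinite} yields that $\calH(\varphi)$ is cofinite in $\bbn$, which is the desired statement about $\HPer(\varphi)$. The main obstacle is (b); while the eigenvalue estimate is morally straightforward, $A_\ast$ and $D_\ast$ need not commute, so to make the argument fully self-contained one would have to be careful about the spectrum of the product, rather than only of its individual factors. Piggy-backing on the Nielsen case, where this technicality has already been resolved in the cited references, is the economical way to finish.
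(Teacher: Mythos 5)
Your reduction to Corollary~\ref{cofinite} is the right skeleton and is essentially what the paper does, but step (b) contains a genuine error: for an expanding endomorphism the sequence $\{R(\varphi^k)\}=\{N(f^k)\}$ need \emph{not} be strictly monotone increasing, only eventually so. The paper's own example $\varphi\colon\bbz\to\bbz$, $\varphi(1)=-2$ (an expanding endomorphism of an almost Bieberbach group, with $D_*=(-2)$) gives $R(\varphi^k)=|1-(-2)^k|$, so $R(\varphi)=R(\varphi^2)=3$. Hence the ``already-known fact'' you propose to import from \cite{LL-JGP,LZ-jmsj} is false as stated, and Corollary~\ref{cofinite} cannot be invoked literally. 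The repair is to prove only \emph{eventual} strict monotonicity and to note that Corollary~\ref{3.2.51} and Corollary~\ref{cofinite} survive under this weaker hypothesis (cf.\ the Remark following Corollary~\ref{N-cofinite}; their proofs only use $k$ large, via Theorem~\ref{BaBo2.7} and Proposition~\ref{BaBo2.8}). Your own per-term sketch in (b) in fact only yields eventual increase, so the conflation of ``strict'' with ``eventually strict'' is exactly where the argument breaks.

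Note also how the paper sidesteps the noncommutativity problem you flag: it never analyzes the spectrum of $A_*D_*^k$. Since $D_*$ has no eigenvalue of modulus $1$, one has $\lambda(\varphi)=\sp\left(\bigwedge D_*\right)>1$ (Theorem~\ref{Radius2}), and from the rational-zeta expansion $R(\varphi^k)=\sum_i\rho_i\lambda_i^k$ one writes $R(\varphi^k)=\Gamma_k+\Omega_k$, where $\Gamma_k=\lambda(\varphi)^k\bigl(\sum_j\rho_je^{2\pi i k\theta_j}\bigr)$ collects the terms of maximal modulus and $\Omega_k\to0$; then $\Gamma_k\to\infty$ forces eventual strict increase. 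Finally, your step (a) (finiteness of all $R(\varphi^k)$ via eigenvalue lower bounds for $A_*D_*^k$) suffers from the same noncommutativity issue you acknowledge and is left unproved; it needs either a genuine argument or a citation before Theorem~\ref{av} can be used to identify $R(\varphi^k)$ with $N(f^k)$.
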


\begin{proof}
Since $\varphi$ is expanding, we have that $\lambda(\varphi)=\sp(\bigwedge D_*)>1$.
For any $k>0$, we can write $R(\varphi^k)=\Gamma_k+\Omega_k$, where
$$
\Gamma_k=\lambda(\varphi)^k\left(\sum_{j=1}^{n(\varphi)}\rho_je^{2i\pi(k\theta_j)}\right),\quad
\Omega_k=\sum_{i=n(\varphi)+1}^{r(\varphi)}\rho_i\lambda_i^k\
\text{ with $|\lambda_i|<\lambda(\varphi)$.}
$$
Here $\Omega_k\to0$ and $\Gamma_k\to\infty$ as $k\to\infty$.
This implies that $R(\varphi^k)$ is eventually strictly monotone increasing.
We can use Corollary~\ref{3.2.51} and then Corollary~\ref{cofinite} to conclude the assertion.
\end{proof}

\end{document}